\documentclass[12pt]{amsart}

\usepackage[T1]{fontenc}
\usepackage{times}

\usepackage{amsfonts, amsmath, amsthm, amssymb, mathtools}
\usepackage{mathrsfs}
\usepackage{tikz-cd}
\usepackage{hyperref}

\newtheorem{theorem}{Theorem}[section]
\newtheorem{lemma}[theorem]{Lemma}
\newtheorem{corollary}[theorem]{Corollary}
\theoremstyle{definition}
\newtheorem{definition}[theorem]{Definition}
\theoremstyle{remark}
\newtheorem{remark}[theorem]{Remark}
\newtheorem{example}[theorem]{Example}

\DeclareMathOperator{\Spec}{Spec}
\DeclareMathOperator{\ch}{char}
\DeclareMathOperator{\Pic}{Pic}
\DeclareMathOperator{\Alb}{Alb}

\newcommand{\pp}{{\mathbb{P}}}
\newcommand{\Q}{{\mathbb{Q}}}
\newcommand{\Z}{{\mathbb{Z}}}

\newcommand{\R}{{\mathbb{R}}}

\newcommand{\oo}{{\mathcal{O}}}
\newcommand{\h}{\widehat}

\newcommand{\ol}{\overline}
\newcommand{\m}{\mathfrak{m}}

\newcommand{\A}{\mathbb{A}}
\newcommand{\p}{\mathfrak{p}}
\newcommand{\ka}{\kappa}

\newcommand{\re}{\mathrm{red}}
\newcommand{\sing}{\mathrm{sing}}
\newcommand{\reg}{\mathrm{reg}}
\newcommand{\sm}{\mathrm{sm}}
\newcommand{\sh}{\mathrm{sh}}

\newcommand{\sep}{\mathrm{sep}}

\title{semi-abelian reduction of Albanese varieties}
\author{Tai-Hsuan Chung}
\subjclass[2020]{Primary: 14J20; Secondary: 14D06, 14J17, 14K15, 14G40}

\keywords{Albanese varieties, semi-abelian reduction, nodes in codimension one, Bertini theorems, arithmetic nonexistence.}

\address{Department of Mathematics, National Cheng Kung University, No. 1, Dasyue Rd., Tainan City 70101, Taiwan}
\email{taihsuanchung@gmail.com}

\begin{document}

\begin{abstract}
Let $X_K$ be a projective geometrically normal variety over the fraction field of a DVR. We show that if $X_K$ admits a projective model whose special fibre has at worst nodes in codimension one, then the Albanese variety $\Alb_{X_K/K}$ has semi-abelian reduction over the same base. As an application, we extend the classical nonexistence theorem of Fontaine and Abrashkin for abelian schemes over $\Z$ to the singular setting. For instance, if a flat projective $\Z$-scheme $X$ has normal and geometrically connected fibres, then $H^1(X_{\Q},\oo_{X_{\Q}})=0$. 
\end{abstract}

\maketitle

\tableofcontents

\section{Introduction}

In their seminal paper, Deligne and Mumford proved that a smooth proper curve of genus at least 2 has stable reduction over a discrete valuation ring if and only if its Jacobian has semi-abelian reduction over the same ring \cite[Thm.~2.4]{DM69}. In higher dimension, this classical result raises a natural question: given a variety $X_K$ over the fraction field $K$ of a discrete valuation ring, what geometric conditions on a model of $X_K$ force semi-abelian or good reduction of its Albanese variety $\Alb_{X_K/K}$? In this note, we prove that if a projective geometrically normal variety $X_K$ admits a projective model whose special fibre has at worst nodes in codimension one (resp.~is smooth in codimension one), then $\Alb_{X_K/K}$ has semi-abelian (resp.~good) reduction. In particular, these reduction types are governed entirely by the codimension-one geometry of the special fibre; singularities in codimension at least two play no role (Theorem~\ref{Main_result}).

\subsection{Main result} Throughout this note, $S$ denotes a Dedekind scheme (i.e. a Noetherian, normal, connected scheme of dimension one) with fraction field $K$, and $s\in S$ a closed point. Let $X_K$ be a projective, geometrically normal, and geometrically connected scheme over $K$. A \emph{model} of $X_K$ over the DVR $\oo_{S,s}$ is a flat projective scheme $X\to\Spec\oo_{S,s}$ whose generic fibre is isomorphic to $X_K$. We say that an equidimensional scheme locally of finite type over a field is \emph{geometrically $R_1$} if every codimension-one point is geometrically regular; it is \emph{geometrically $N_1$} if every codimension-one point is either a geometrically regular point or a node (cf. Definition~\ref{definition_node}). We say that $X_K$ has \emph{geometrically $N_1$ (resp.~geometrically $R_1$) reduction} at $s\in S$ if there exists a model $X$ over $\oo_{S,s}$ whose special fibre $X_s$ is geometrically $N_1$ (resp.~geometrically $R_1$). 

Recall that an abelian variety $A_K$ over $K$ has \emph{semi-abelian} (resp.~\emph{good}) reduction at $s\in S$ if the identity component $\mathscr{N}^0_s$ of the special fibre of its N\'eron model $\mathscr{N}$ is a semi-abelian (resp.~abelian) variety (cf. \cite[\S~7.4]{BLR90}). By definition, a \emph{semi-abelian variety} is an extension of an abelian variety by an algebraic torus. 

We now state our main result.

\begin{theorem}\label{Main_result}
If $X_K$ has geometrically $N_1$ (resp.~geometrically $R_1$) reduction at $s\in S$, then its Albanese variety $\Alb_{X_K/K}$ and the Picard variety $(\Pic^0_{X_K/K})_{\re}$ have semi-abelian (resp.~good) reduction at $s\in S$.
\end{theorem}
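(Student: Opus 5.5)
Since $\Alb_{X_K/K}$ and $(\Pic^0_{X_K/K})_{\re}$ are dual abelian varieties, and semi-abelian (resp.~good) reduction is invariant under isogeny and duality, we only need to treat the Albanese. The reduction type can also be checked after a finite separable extension of $K$ and after strict henselization of $\oo_{S,s}$. So we may assume $R=\oo_{S,s}$ is a strictly henselian DVR with separably closed residue field $k$. Let $X/R$ be the given model. We will use the Grothendieck--Néron criterion: $A_K=\Alb_{X_K/K}$ has semi-abelian (resp.~good) reduction if and only if the inertia group acts unipotently (resp.~trivially) on $T_\ell A_K$ for some $\ell\neq\ch k$. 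In the mixed or unequal setting one uses the $\ell$-adic version, which is valid in all characteristics via SGA 7.

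\textbf{Step 1: reduce to a relative curve.} Since $X_K$ is normal, the Albanese is detected by curves: for a general complete-intersection curve $C_K\subset X_K$ of high degree, the map $J_{C_K}\to\Alb_{X_K/K}$ is surjective (Lefschetz for $\pi_1$, or the classical surjectivity of the Jacobian of a general hyperplane-section curve onto the Albanese of a normal projective variety). Choose a projective embedding of $X$ over $R$ and cut $X$ by $\dim X_K-1$ general hypersurfaces $H_1,\dots,H_{d-1}$ of large degree defined over $R$, giving a relative curve $C=X\cap H_1\cap\dots\cap H_{d-1}$.

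We need Bertini theorems over $R$, possibly after enlarging $k$ via a finite extension of $R$. These should make three things hold simultaneously. First, $C_K$ is a smooth geometrically connected curve, using that $X_K$ is normal so its singular locus has codimension $\ge 2$ and is avoided. Second, $C$ is flat. Third, the special fibre $C_s$ is a curve that avoids the codimension-$\ge 2$ locus of $X_s$ where the singularities beyond nodes live. Then $C_s$ meets the special fibre only at codimension-one points of $X_s$, after a generic choice, and by the $N_1$ hypothesis $C_s$ is a reduced curve with at worst nodes (resp.~smooth). Checking that a general hyperplane through a node of $X_s$ in codimension one cuts a node on the curve is a local computation: a node is formally $xy=0$ times a smooth germ, and transverse slicing preserves it. Over a finite residue field, the Bertini statements need Poonen-type arguments, but after passing to $k$ separably closed, hence infinite, classical Bertini with care for nonperfectness suffices. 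I expect this step to be the \textbf{main obstacle}: controlling geometric reducedness and the node condition in characteristic $p$ over a possibly imperfect residue field, and ensuring connectedness of $C_s$ via Zariski connectedness, since $C$ is flat projective with connected generic fibre.

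\textbf{Step 2: conclude.} Now $C_K$ is a smooth projective curve admitting a flat projective model whose special fibre is a nodal (resp.~smooth) curve. The model $C$ may be nonregular at nodes, but a nodal-fibred curve is semistable. Hence $J_{C_K}$ has semi-abelian (resp.~good) reduction, by Deligne--Mumford / \cite[\S 9.2]{BLR90}, or equivalently by unipotence of inertia on $H^1$ of a semistable curve. Semi-abelian (resp.~good) reduction passes to quotients: the quotient map gives a surjection $T_\ell J_{C_K}\to T_\ell \Alb_{X_K/K}$ up to finite index, which is equivariant for inertia, and unipotent (resp.~trivial) action descends. This yields the claim for $\Alb_{X_K/K}$, and by duality for $(\Pic^0_{X_K/K})_{\re}$.
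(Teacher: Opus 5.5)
Your architecture is reasonable and partly different from the paper's. Passing to the strict henselization makes $k$ separably closed, hence infinite, which sidesteps the finite-field Bertini theorem. Cutting straight down to a curve and descending unipotence of inertia along $V_\ell J_{C_K}\twoheadrightarrow V_\ell\Alb_{X_K/K}$ replaces the paper's induction. There, each step shows that $(\Pic^0_{X_K/K})_{\re}\to(\Pic^0_{H_K/K})_{\re}$ has finite kernel, because $H^1(X_K,\oo_{X_K}(-d))=0$ for $d\gg0$, and then applies \cite[Lem.~7.4/2, Cor.~7.3/7]{BLR90}. For a merely geometrically normal $X_K$ over an arbitrary field, your surjectivity $J_{C_K}\to\Alb_{X_K/K}$ is itself most directly proved by iterating that $H^1$-injectivity, keeping each intermediate section geometrically normal.

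\textbf{The gap.} The step you label the main obstacle is the heart of the theorem, and your sketch does not contain the idea that makes it work. The $N_1$ hypothesis speaks only about codimension-one points $x\in X_s$, and only about $\widehat{\oo^{\sh}_{X_s,x}}$, whose residue field $\ka(x)^{\sep}$ has transcendence degree $\dim X_s-1$ over $k$. Your curve $C_s$ meets $\overline{\{x\}}$ in \emph{closed} points of $X_s$, not in codimension-one points as you write. At those closed points the hypothesis gives no direct information. That the singularity there is ``formally $xy=0$ times a smooth germ'' is not a local computation. It requires spreading the formal structure at the generic point of $\overline{\{x\}}$ out to an open neighbourhood of $x$.

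The paper supplies exactly this via Artin approximation (Theorem~\ref{Geometric_characterization_of_nodes}). Separability of $\ka(x)/k$ gives $k$-linear coefficient fields, hence an algebraic model $\Spec A[a,b]/(ab)$ with $\Spec A$ \'etale over affine space. Artin approximation then yields an \'etale roof $(X_s,x)\leftarrow(V,v)\to(\Spec k[a,b,t_1,\dots,t_m]/(ab),(a,b))$. Slicing then consists of three things:
\begin{enumerate}
\item avoid the proper closed subset $\overline{\{x\}}\setminus\phi(V)$;
\item cut $\overline{\{x\}}$ so that the section is reduced and its generic points have separable residue fields (this is where separability in the definition is used);
\item show that the local equation has nonzero differential relative to some projection forgetting one $t_i$, so the section is again \'etale over a standard node.
\end{enumerate}
Some such argument must be given, or a substitute such as a parametrized Morse lemma, which needs care in characteristic $2$.

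\textbf{Smaller points.}
\begin{enumerate}
\item Reduction type can be checked after unramified base change or strict henselization, but \emph{not} after an arbitrary finite separable extension: a ramified extension can create semi-abelian reduction. Drop ``possibly after enlarging $k$ via a finite extension of $R$''; you do not need it, since $k$ is already infinite.
\item Choosing hypersurfaces over $R$ that are general on both fibres, and checking flatness of $C$, both need an argument. The paper uses surjectivity of the specialization map, Lemma~\ref{infinite_intersection}, and the local flatness criterion with $H_k$ avoiding the associated points of the special fibre. You need this at each of your cuts.
\item Smoothness of $C_K$ needs geometric normality of $X_K$, not mere normality.
\end{enumerate}
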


The proof uses a Bertini-type argument to reduce the problem to curves. The main technical challenge of this note involves formalizing a suitable notion of nodes of codimension one and establishing that it is preserved under taking appropriate hypersurface sections. To this end, we prove a new Bertini-type theorem for non-isolated nodal singularities over an arbitrary field (Theorems~\ref{N1_Bertini_infinite_field} and \ref{N1_Bertini_finite_field}), which may be of independent interest. We defer the detailed discussion of this strategy and the motivation for the $N_1$ condition to Subsection~\ref{Strategy}.

\begin{remark}
Even when $\Alb_{X_K/K}$ itself is trivial, Theorem~\ref{Main_result} provides non-trivial obstructions. By applying the result iteratively to hypersurface sections, one obtains infinitely many successive hypersurface sections of $X_K$ whose Albanese varieties have semi-abelian (resp.~good) reduction (see Corollary~\ref{complete_intersections}). This provides a geometric obstruction to the existence of a geometrically $N_1$ reduction of $X_K$.
\end{remark}

\begin{remark}[Sharpness of the $N_1$ condition] For curves of genus $\ge2$, by \cite[Thm.~2.4]{DM69}, the semi-abelian reduction of the Jacobian forces the special fibre of the minimal regular model of the curve to be geometrically $N_1$. Thus in the curve case, the geometrically $N_1$ condition is optimal. We note, however, that for good reduction, the geometrically $R_1$ condition can be weakened even in the curve case (cf. \cite[Ex.~9.2/8]{BLR90}).
\end{remark}

We discuss two reduction types which are geometrically $N_1$. Let $X_K$ be as in Theorem~\ref{Main_result}, and assume in addition that it is log canonical. We say that $X_K$ has \emph{locally stable reduction at $s\in S$} if it admits a projective model $X$ over $\oo_{S,s}$ such that the special fibre $X_s$ is reduced and the pair $(X,X_s)$ is log canonical. It turns out that $X_s$ is geometrically $N_1$ provided $\ka(s)$ is perfect of odd characteristic.

\begin{corollary}\label{locally_stable_implies_geo_N1}
Assume the residue field $\ka(s)$ is perfect with $\ch \ka(s)\ne2$. If $X_K$ has locally stable reduction at $s\in S$, then $\Alb_{X_K/K}$ and $(\Pic^0_{X_K/K})_{\re}$ have semi-abelian reduction at $s\in S$.
\end{corollary}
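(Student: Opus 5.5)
The plan is to reduce the statement to Theorem~\ref{Main_result}: it suffices to show that if $X$ is a projective model over $\oo_{S,s}$ with $X_s$ reduced and $(X,X_s)$ log canonical, then $X_s$ is geometrically $N_1$. Being $N_1$ is a condition at codimension-one points of $X_s$, so I would localize at such a point $\xi\in X_s$. Since $X$ is flat over $\oo_{S,s}$ with $X_K$ normal, $\xi$ is a codimension-two point of $X$. The local ring $\oo_{X,\xi}$ is then a two-dimensional excellent local ring, and $(\oo_{X,\xi},\operatorname{div}(\pi))$ is a log canonical pair, where $\pi$ is a uniformizer and the divisor is reduced. First I would check that $X$ is normal at $\xi$. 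Log canonicity presupposes normality of $X$ (or at least $S_2$ together with $R_1$); in any case $X_s$ is reduced and Cartier, and $X_K$ is normal, so Serre's criterion gives normality of $X$ near $\xi$.

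Next I would pass to a geometric situation. Since $\ka(s)$ is perfect, I can take the strict henselization, or the completion after an unramified extension, without destroying log canonicity. This holds because log discrepancies are computed on a log resolution, which exists for excellent surfaces and is compatible with étale base change. It also holds because geometric regularity or nodality of $X_s$ at $\xi$ can be tested after replacing $\ka(s)$ by its separable closure, which is an algebraic closure because $\ka(s)$ is perfect. We then have a two-dimensional normal excellent local scheme $Y=\Spec\oo$ with a reduced Cartier divisor $D=\operatorname{div}(\pi)$ such that $(Y,D)$ is lc. The residue field $k=\ka(\xi)$ has transcendence degree $\dim X_s-1$ over the algebraically closed field $\ka(s)^{\sep}$.

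At this point I would invoke the classification of two-dimensional log canonical pairs with a nonzero reduced boundary, as in Kollár's \emph{Singularities of the MMP}, \S3.3, which is valid for excellent surfaces. If $D$ is nonzero and reduced and $(Y,D)$ is lc, then either $D$ has a single branch through the point, $Y$ is a cyclic quotient singularity and $D$ is regular there; or $D$ has two branches, each of them regular, meeting transversally on the minimal resolution, with $Y$ a cyclic quotient and $D$ a chain-type boundary; or there are dihedral-type cases with $D$ irreducible. Because $D$ is Cartier, the cases with an irreducible $D$ on a non-Gorenstein index would force $D$ to be regular. So $D$ is either regular, or it is a $\mathbb{Q}$-Cartier divisor with two smooth branches that is Cartier of index one on a cyclic quotient. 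In the latter case the standard computation of the $A_n$-type toric picture $xy=\pi^{n+1}$ shows that $D=\{xy=0\}$ is a node.

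This step contains the main obstacle. The residue field $k$ is not algebraically closed; it is a function field. The classification therefore has to be applied over a non-closed residue field, and I have to verify that "regular" and "node" in the sense of Definition~\ref{definition_node} are obtained geometrically, which means after base change along $\ka(s)\to\overline{\ka(s)}$ rather than after enlarging $k$. I would handle this by using perfectness: $X_s\otimes\overline{\ka(s)}$ is still reduced, and the pair stays lc under this separable (indeed étale-on-the-base) field extension, so the dichotomy can be applied at each codimension-one point upstairs. The hypothesis $\ch\ne2$ enters exactly here. It is needed to identify a double point with two transversal smooth branches as a node in the sense of the definition, since the quadratic form $xy$ must be nondegenerate. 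It is also needed to exclude the characteristic-two lc exceptions, such as the wild quotient singularities. Once $X_s$ is shown to be geometrically $N_1$, Theorem~\ref{Main_result} gives semi-abelian reduction of $\Alb_{X_K/K}$ and of $(\Pic^0_{X_K/K})_{\re}$.
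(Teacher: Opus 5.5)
Your reduction to Theorem~\ref{Main_result} agrees with the paper, and so does the idea of reading off the codimension-one behaviour of $X_s$ from two-dimensional lc theory. However, the step that is supposed to produce nodes in the sense of Definition~\ref{definition_node} has a genuine gap. You apply the branch dichotomy (single branch $\Rightarrow$ $D$ regular; two branches $\Rightarrow$ $xy=\pi^{n+1}$) at $\xi$, whose residue field $\ka(\xi)$ is a function field. You then try to repair this by base change along $\ka(s)\to\overline{\ka(s)}$. That base change never makes $\ka(\xi)$ separably closed and cannot split nodes; this is exactly Example~\ref{Base_change_fails_to_split_nodes}.

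Concretely, take $X=\Spec\oo_{S,s}[u,v,t]/(u^2-tv^2-\pi)$ and $\xi=(u,v)\in X_s$. Then $\oo_{X,\xi}$ is regular and $(X,X_s)$ is lc at $\xi$. On the other hand, $\oo_{X_s,\xi}=\ka(s)(t)[u,v]_{(u,v)}/(u^2-tv^2)$ is analytically irreducible (a single branch over $\ka(\xi)$) and not regular, and it stays so after base change to $\overline{\ka(s)}$. So your dichotomy is false over $\ka(\xi)$. Moreover, ``geometrically'' in Definition~\ref{definition_node} does not mean extending $\ka(s)$: the definition is a condition on $\widehat{\oo^{\sh}_{X_s,\xi}}$, i.e.\ it enlarges $\ka(\xi)$ to $\ka(\xi)^{\sep}$.

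The missing ingredient is a statement valid over arbitrary residue fields that accounts for non-split nodes. The paper uses \cite[Cor.~2.32]{Kol13}: every codimension-one point of $X_s$ is regular or an intrinsic node (Definition~\ref{intrinsic_node}), and the latter notion includes $u^2-tv^2$ above. Since $\ka(s)$ is perfect, regular means geometrically regular and $\ka(\xi)/\ka(s)$ is separable. Lemma~\ref{intrinsic_node_is_node} then turns intrinsic nodes into nodes by splitting the tangent-cone quadratic form over $\ka(\xi)^{\sep}$.

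This is also where $\ch\ka(s)\ne2$ really enters, not where you put it. The form $xy$ is nondegenerate in every characteristic, and two geometrically transversal regular branches give a node in any characteristic (Lemma~\ref{stacks_node_is_node}); wild quotients are not the issue. The danger in characteristic $2$ is the Whitney umbrella (Example~\ref{example_Whitney_umbrella}). The same local equation $u^2=tv^2+\pi$ still gives an lc pair at $\xi$: one blow-up yields an snc pair, with the strict transform of $D$ meeting the exceptional curve at a point with residue field $\ka(\xi)(\sqrt t)$. But $u^2-tv^2$ remains irreducible over $\ka(\xi)^{\sep}$, so this intrinsic node is not a node.
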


Recall that a smooth, projective, and geometrically connected scheme $X_K$ is said to have \emph{semi-stable reduction at $s\in S$} if there exists a regular projective model $X$ over $\oo_{S,s}$ such that each point $x\in X_s$ admits an \'etale neighborhood $U\to X$ of $x$ that is \'etale over 
\[
\Spec\oo_{S,s}[x_1,x_2,\dots, x_{n+1}]/(x_1x_2\cdots x_r-\pi),
\]
where $\pi$ is a uniformizer of $\oo_{S,s}$, $1\le r=r(x)\le n+1$, and $n=\dim X_K$. A standard SNC scheme $\Spec k[x_1,x_2,\dots, x_m]/(x_1x_2\cdots x_r)$ is geometrically $N_1$ (Lemma~\ref{snc_implies_geo_N1}), and thus we obtain

\begin{corollary}\label{semi-stable_implies_semi-abelian}
If $X_K$ has semi-stable reduction at $s\in S$, then $\Alb_{X_K/K}$ and $(\Pic^0_{X_K/K})_{\re}$ have semi-abelian reduction at $s\in S$.
\end{corollary}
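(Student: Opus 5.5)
The plan is to deduce Corollary~\ref{semi-stable_implies_semi-abelian} directly from Theorem~\ref{Main_result}. It suffices to show that the regular projective model $X$ in the definition of semi-stable reduction has a geometrically $N_1$ special fibre $X_s$. Note that $X_K$ is smooth, projective and geometrically connected, hence geometrically normal, so the hypotheses on $X_K$ in Theorem~\ref{Main_result} are satisfied. The model $X$ is projective over $\oo_{S,s}$. It is also flat: locally it is \'etale over $\oo_{S,s}[x_1,\dots,x_{n+1}]/(x_1\cdots x_r-\pi)$, which is flat over $\oo_{S,s}$ because $\pi$ is a nonzerodivisor modulo $x_1\cdots x_r$ (equivalently, $x_1\cdots x_r-\pi$ is a nonzero element of the fibre polynomial ring over every point). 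So $X$ is a model of $X_K$ in the sense of the paper.

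Next I localize the question. Fix a point $x\in X_s$ and an \'etale neighborhood $U\to X$ of $x$ with an \'etale map $U\to P:=\Spec\oo_{S,s}[x_1,\dots,x_{n+1}]/(x_1\cdots x_r-\pi)$. Base change to $s$ gives \'etale maps $U_s\to X_s$ and $U_s\to P_s=\Spec \ka(s)[x_1,\dots,x_{n+1}]/(x_1\cdots x_r)$. The scheme $P_s$ is a standard SNC scheme, so it is geometrically $N_1$ by Lemma~\ref{snc_implies_geo_N1}.

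The key step is to check that being geometrically $N_1$ is \'etale local. This has three parts.
\begin{itemize}
\item \emph{Equidimensionality.} $X_s$ is equidimensional of dimension $n$. Each local model $P_s$ is equidimensional of dimension $n$, and \'etale maps preserve local dimension, so every irreducible component of $X_s$ has dimension $n$.
\item \emph{Codimension one points.} An \'etale morphism preserves the dimension of local rings. Hence a codimension-one point of $U_s$ maps to a codimension-one point both of $X_s$ and of $P_s$. Conversely, every codimension-one point of $X_s$ lies in the image of some such $U_s$.
\item \emph{Transfer of the local condition.} Being geometrically regular, or being a node in the sense of Definition~\ref{definition_node}, should be invariant under \'etale maps. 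This is clear if Definition~\ref{definition_node} is phrased via the completed strict henselization $\widehat{\oo}^{\sh}$ after passing to $\bar k$ (or to a separable closure). An \'etale map induces isomorphisms of strict henselizations, hence of their completions. Geometric regularity transfers in the same way, since regularity of $\oo\otimes\bar k$ is \'etale local.
\end{itemize}
Granting these, a codimension-one point of $X_s$ is geometrically regular or a node exactly when its preimage in $U_s$ is, and hence exactly when its image in $P_s$ is. The latter holds by Lemma~\ref{snc_implies_geo_N1}. So $X_s$ is geometrically $N_1$, $X_K$ has geometrically $N_1$ reduction at $s$, and Theorem~\ref{Main_result} gives semi-abelian reduction of both $\Alb_{X_K/K}$ and $(\Pic^0_{X_K/K})_{\re}$.

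The main obstacle is the \'etale invariance of the node condition. Its difficulty depends on how Definition~\ref{definition_node} is phrased, especially over an imperfect residue field $\ka(s)$. If the definition is stated over the algebraic closure using the completed local ring, I would base change everything to $\bar{\ka}(s)$ first. The \'etale maps $U_s\to X_s$ and $U_s\to P_s$ stay \'etale after this base change, and the codimension-one points of $X_{\bar s}$ lie over codimension-one points of $X_s$, so the invariance reduces to the isomorphism of complete local rings at corresponding points under an \'etale map of schemes over an algebraically closed field. If the paper has already proved such invariance as a lemma, I would simply cite it.
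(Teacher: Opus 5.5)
Your proposal is correct and follows essentially the same route as the paper: restrict the \'etale roof to the special fibre, apply Lemma~\ref{snc_implies_geo_N1} to $\Spec\ka(s)[x_1,\dots,x_{n+1}]/(x_1\cdots x_r)$, transfer the condition using that \'etale maps preserve dimensions of local rings, smoothness, and nodality, and conclude by Theorem~\ref{Main_result}. With the paper's Definition~\ref{definition_node}, your first justification is the right one: \'etale maps preserve separability of residue fields over $k$ and induce $k$-isomorphisms of completed strict henselizations, as in Lemma~\ref{etale_nbd} and Theorem~\ref{Geometric_characterization_of_nodes}. So the fallback via $\bar{\ka}(s)$ is unnecessary, and comparing ordinary completed local rings at non-closed points would not give isomorphisms anyway (cf.\ Lemma~\ref{O_u_tensor_product}).
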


The proofs of Corollaries~\ref{locally_stable_implies_geo_N1} and \ref{semi-stable_implies_semi-abelian} are deferred to Section~\ref{Proofs_corollaries}.

\begin{remark}\label{Grothendieck_monodromy}
Corollary~\ref{semi-stable_implies_semi-abelian} holds even if $X_K$ is only assumed to be proper instead of projective. This follows from Grothendieck's theory of nearby and vanishing cycles and his criterion for semi-abelian reduction of abelian varieties; see Subsection~\ref{Comparison} for a detailed discussion.
\end{remark}

Applying Theorem~\ref{Main_result} with $X_K$ an abelian variety gives a geometric perspective of the N\'eron--Ogg--Shafarevich criterion (cf. \cite[Thm.~1]{ST68}).

\begin{corollary}\label{good_iff_geo_R1}
An abelian variety over $K$ has good reduction at $s\in S$ if and only if it has geometrically $R_1$ reduction at $s\in S$.
\end{corollary}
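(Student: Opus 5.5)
We prove the two implications separately. The forward implication is the easy one. If an abelian variety $A_K$ has good reduction at $s$, its N\'eron model over $\oo_{S,s}$ is an abelian scheme $\mathscr{A}$; see \cite[\S~7.4]{BLR90}. An abelian scheme over a DVR is projective, because it carries a relatively ample line bundle by Raynaud's theorem (cf.\ \cite[\S~6.4]{BLR90}). So $\mathscr{A}$ is a flat projective model of $A_K$. Its special fibre is an abelian variety, hence smooth, so every codimension-one point is geometrically regular. Therefore $A_K$ has geometrically $R_1$ reduction at $s$.

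For the converse, suppose $A_K$ has geometrically $R_1$ reduction at $s$. We apply Theorem~\ref{Main_result} with $X_K=A_K$. This is allowed: $A_K$ is projective, geometrically connected, and smooth, so in particular geometrically normal. The theorem then says that $\Alb_{A_K/K}$ has good reduction at $s$.

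It remains to identify $\Alb_{A_K/K}$ with $A_K$. Since $A_K$ has the rational point $0$, the Albanese morphism $a\colon A_K\to\Alb_{A_K/K}$ can be normalized to send $0$ to $0$. By the universal property, the identity map $A_K\to A_K$ factors uniquely through $a$. Conversely, any pointed morphism from $A_K$ to an abelian variety is a homomorphism, hence already factors through the identity. So the identity is the Albanese map and $\Alb_{A_K/K}\cong A_K$. Good reduction of the Albanese is therefore good reduction of $A_K$.

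If one wants to avoid relying on the base point, the same conclusion can be reached through the Picard side. The theorem gives good reduction of $(\Pic^0_{A_K/K})_{\re}$, which is the dual abelian variety $A_K^\vee$. Good reduction is preserved under isogeny: $A_K$ and $A_K^\vee$ are isogenous via a polarization, and the N\'eron--Ogg--Shafarevich criterion shows good reduction is isogeny-invariant \cite[Thm.~1]{ST68}. In fact a dual abelian scheme gives this directly.

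The only point that needs care is that the model produced in the forward direction must be projective, which is exactly what Raynaud's theorem supplies. Everything else is formal once Theorem~\ref{Main_result} is available.
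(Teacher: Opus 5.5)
Your proof is correct and follows the argument the paper intends. The paper obtains the nontrivial direction by applying Theorem~\ref{Main_result} to $X_K=A_K$, using $\Alb_{A_K/K}\cong A_K$ (or that $(\Pic^0_{A_K/K})_{\re}=A_K^\vee$ is isogenous to $A_K$); the converse is the standard fact that under good reduction the N\'eron model is a projective abelian scheme, giving a smooth projective model. Your write-up makes both directions explicit, including the projectivity input from Raynaud that the paper leaves implicit.
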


For the $N_1$ analogue of Corollary~\ref{good_iff_geo_R1}, J\'anos Koll\'ar pointed out that the equal characteristic $0$ case follows from \cite[Cor.~5, (5.1)]{Kol26}. The analogue also extends to perfect residue fields of odd characteristic provided
\cite[Thm.~42]{Kol26} holds in that setting.

\begin{corollary}\label{SAR_iff_geo_N1}
Suppose $S$ is of equal characteristic $0$. Then an abelian variety over $K$ has semi-abelian reduction at $s\in S$ if and only if it has geometrically $N_1$ reduction at $s\in S$.
\end{corollary}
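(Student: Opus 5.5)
One direction follows directly from Theorem~\ref{Main_result}. Let $A_K$ be an abelian variety with geometrically $N_1$ reduction at $s$. Then $\Alb_{A_K/K}\cong A_K$ (after choosing the origin as base point), so Theorem~\ref{Main_result} shows that $A_K$ has semi-abelian reduction at $s$.

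For the converse, suppose $A_K$ has semi-abelian reduction at $s$. We must produce a flat projective model over $\oo_{S,s}$ whose special fibre is geometrically $N_1$. Since $S$ has equal characteristic $0$, the residue field $\ka(s)$ is perfect of characteristic $0$. The plan is to invoke Koll\'ar's result \cite[Cor.~5, (5.1)]{Kol26}, as indicated before the statement. That result gives a projective model $X$ of $A_K$ over $\oo_{S,s}$ whose special fibre $X_s$ is reduced and for which the pair $(X,X_s)$ is log canonical; in other words, $A_K$ has locally stable reduction. An abelian variety is smooth, hence log canonical, so this fits the framework preceding Corollary~\ref{locally_stable_implies_geo_N1}.

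It then remains to check that $X_s$ is geometrically $N_1$. This is exactly the assertion, quoted before Corollary~\ref{locally_stable_implies_geo_N1}, that for $\ka(s)$ perfect of characteristic $\neq 2$ a locally stable model has geometrically $N_1$ special fibre. I would cite that lemma directly. Its content is as follows: at a codimension-one point $\eta$ of $X_s$, the local ring of $X$ is $2$-dimensional. Log canonicity of $(X,X_s)$ with $X_s$ reduced Cartier then forces $X$ to have at worst an $A_n$-type singularity there and $X_s$ to be either regular or a normal crossing curve germ, i.e.\ a node. Passing to geometric points is harmless because $\ka(s)$ is perfect.

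The main obstacle is the existence of the locally stable model. If the cited corollary only provides a model after a finite extension of $K$, or only a proper model, I would need a descent and projectivity argument. For projectivity, I would note that a log canonical model constructed via the MMP is projective over the base. For the base change, I would argue that the semi-abelian hypothesis should already give a model without extension, for instance via Mumford--Faltings--Chai degeneration data, which produces toroidal and hence semi-stable-like models directly over $\oo_{S,s}$ when the torus part is split. In the non-split case, I would try Galois descent of a $\mathrm{Gal}$-equivariant model, using that quotients of locally stable pairs by finite groups acting freely in codimension one on the special fibre remain locally stable.
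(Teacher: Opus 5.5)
Your proposal follows the paper's route. The implication from geometrically $N_1$ reduction to semi-abelian reduction is Theorem~\ref{Main_result} applied to $A_K$ (with $\Alb_{A_K/K}\cong A_K$), and the converse rests on Koll\'ar's \cite[Cor.~5, (5.1)]{Kol26}; your passage from a locally stable model to a geometrically $N_1$ special fibre, via the argument of Corollary~\ref{locally_stable_implies_geo_N1} (perfect residue field, $\ch\ka(s)\ne 2$), fits the paper's remark that the result would extend to perfect residue fields of odd characteristic. The paper takes Koll\'ar's result as supplying the required projective model directly over $\oo_{S,s}$, so your closing contingencies (degeneration data, Galois descent) are unnecessary, and as sketched they would not stand on their own as a proof.
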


\subsection{Arithmetic nonexistence} A classical problem in \mbox{arithmetic geometry} asks which smooth proper morphisms $X\to\Spec\Z$ exist, namely, which smooth proper varieties over $\Q$ have everywhere good reduction. A result of Ogg \cite{Ogg66}, which he attributes to Tate, shows that no elliptic curve exists over $\Z$. This was generalized independently by Fontaine \cite{Fon85} and Abrashkin \cite{Abr88}, who established that there is no nonzero abelian variety over $\Q$ with everywhere good reduction. Afterwards, Abrashkin \cite{Abr90} and Fontaine \cite{Fon93} proved that a smooth proper variety $X_{\Q}$ with everywhere good reduction satisfies strong vanishing results for low Hodge numbers, i.e. $H^i(X_{\Q},\Omega^j_{X_{\Q}})=0$ for all $i+j\le3$ with $i\ne j$. Recently, this classical program has expanded to broader geometric settings, yielding new progress on smooth models over $\Z$ in several directions: the nonexistence result for Enriques surfaces by Schr\"oer \cite{Sch23}, the complete existence and nonexistence picture for Mukai varieties of genus $7$ in dimensions at most 10 by Ito et al.~\cite{IKTT25}, and the birational classification of smooth projective models of surfaces with negative Kodaira dimension by Bernasconi et al.~\cite{BMP26}.

These classical and recent nonexistence results are formulated for smooth proper models over $\Z$, and therefore they naturally lead to the following question: to what extent can such arithmetic obstructions be circumvented by allowing singular reductions? In other words, can a variety ``gain'' the ability to exist over $\Z$ by permitting sufficiently complicated singularities in its closed fibres?

Theorem~\ref{Main_result} provides a largely negative answer for irregular normal projective varieties. By establishing a direct reduction to the Fontaine--Abrashkin theorem via Albanese varieties, we generalize their classical nonexistence result to the singular setting:

\begin{theorem}\label{everywhere_R1_reduction}
If $X_{\Q}$ is a normal, projective, and geometrically connected scheme over $\Q$ with everywhere $R_1$ reduction, then $\Alb_{X_{\Q}/\Q}$ is trivial.\break That is, $H^1(X_{\Q},\oo_{X_\Q})=0$.
\end{theorem}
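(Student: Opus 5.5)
We take $S=\Spec\Z$ with fraction field $\Q$, and reduce directly to the Fontaine--Abrashkin theorem via Theorem~\ref{Main_result}. The first point is to interpret ``everywhere $R_1$ reduction'' correctly. For each prime $p$ there should be a projective model over $\Z_{(p)}$ whose special fibre is geometrically $R_1$. Since $\mathbb{F}_p$ is perfect, $R_1$ and geometrically $R_1$ coincide for schemes of finite type over $\mathbb{F}_p$. Regularity at a codimension-one point of the special fibre is a statement about a local ring whose residue field is finitely generated over a perfect field, so by the standard comparison (regular equals geometrically regular over perfect fields) nothing is lost.

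Second, we check that $X_{\Q}$ satisfies the standing hypotheses of Theorem~\ref{Main_result}. In characteristic zero, normality implies geometric normality, and $X_{\Q}$ is assumed geometrically connected and projective. Applying Theorem~\ref{Main_result} at every closed point $s=(p)$ of $\Spec\Z$ shows that $A:=\Alb_{X_{\Q}/\Q}$ has good reduction at every prime $p$. By the Fontaine--Abrashkin theorem \cite{Fon85,Abr88}, every abelian variety over $\Q$ with everywhere good reduction is zero, so $A=0$.

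Finally, we deduce $H^1(X_{\Q},\oo_{X_\Q})=0$. The Albanese variety is dual to the Picard variety $(\Pic^0_{X_{\Q}/\Q})_{\re}$, which therefore also vanishes; alternatively, apply Theorem~\ref{Main_result} to the Picard variety directly. In characteristic zero, Cartier's theorem makes $\Pic^0_{X_{\Q}/\Q}$ smooth, hence reduced. Its tangent space at the origin is $H^1(X_{\Q},\oo_{X_{\Q}})$, so this cohomology group vanishes.

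The only delicate steps are the matching of $R_1$ with geometric $R_1$ over the perfect residue fields $\mathbb{F}_p$, and the use of smoothness of $\Pic^0$ in characteristic zero. I expect the first to be the main (though mild) obstacle. The approach is to use the fact that, over a perfect field, a local ring essentially of finite type is regular if and only if it is geometrically regular, since every finitely generated extension is then separable.
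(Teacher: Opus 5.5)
Your proposal is correct and follows the paper's proof. Like the paper, you use the perfectness of $\mathbb{F}_p$ to identify $R_1$ with geometric $R_1$, apply Theorem~\ref{Main_result} at every prime to get everywhere good reduction of $\Alb_{X_{\Q}/\Q}$, and conclude by Fontaine--Abrashkin. Your additional step deducing $H^1(X_{\Q},\oo_{X_{\Q}})=0$ via duality with the Picard variety and Cartier's smoothness theorem is also correct; it makes explicit what the paper compresses into ``that is.''
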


In contrast to the classical smooth setting, the reductions appearing in Theorem~\ref{everywhere_R1_reduction} are permitted to possess arbitrarily complicated singularities away from codimension one. We emphasize that the closed fibres, being merely $R_1$, may all be nonreduced. We are not aware of previous arithmetic nonexistence criteria over $\Z$ that remain applicable when nonreduced closed fibres are allowed.  

\begin{proof}[Proof of Theorem~\ref{everywhere_R1_reduction}]
Since each closed fibre is defined over a perfect field, being $R_1$ is the same as being geometrically $R_1$. Therefore Theorem~\ref{Main_result} implies that $\Alb_{X_{\Q}/\Q}$ has everywhere good reduction, and hence is trivial by Fontaine--Abrashkin.
\end{proof}

\begin{remark}
Theorem~\ref{everywhere_R1_reduction} also holds if $\Q$ is replaced by $\Q(\sqrt{-1})$, $\Q(\sqrt{-3})$, or $\Q(\sqrt{5})$, because \cite[Cor., p.~517]{Fon85} guarantees that no nonzero abelian varieties over these fields have everywhere good reduction.
\end{remark}

We note that Theorem~\ref{everywhere_R1_reduction} assumes only the existence of \emph{local} models of $X_{\Q}$ near each prime. In particular, assuming a model of $X_{\Q}$ exists globally, the result specializes to

\begin{corollary}\label{over_Z_trivial_Albanese}  
Let $X\to\Spec\Z$ be a flat projective morphism with generic fibre $X_{\Q}$ normal and geometrically connected. If every closed fibre is $R_1$, then $H^1(X_{\Q},\oo_{X_{\Q}})=0$. 
\end{corollary}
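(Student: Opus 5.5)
The plan is to reduce Corollary~\ref{over_Z_trivial_Albanese} to Theorem~\ref{everywhere_R1_reduction} by localizing the global model at each prime.

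\textbf{Step 1: local models.} Fix a prime $p$ and set $X_{(p)}:=X\times_{\Spec\Z}\Spec\Z_{(p)}$. Flatness and projectivity are stable under base change, so $X_{(p)}\to\Spec\Z_{(p)}$ is flat and projective. Its generic fibre is $X_{\Q}$, and its special fibre is the closed fibre $X_{\mathbb{F}_p}$. Hence $X_{(p)}$ is a model of $X_{\Q}$ over the DVR $\Z_{(p)}=\oo_{\Spec\Z,(p)}$ in the sense of the Introduction.

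\textbf{Step 2: the $R_1$ condition.} By hypothesis $X_{\mathbb{F}_p}$ is $R_1$. The definition of ``geometrically $R_1$'' is stated for equidimensional schemes, so I must check that $X_{\mathbb{F}_p}$ is equidimensional. Since $X$ is flat and projective over $\Z$ and $X_{\Q}$ is irreducible of dimension $n$, every irreducible component of the closed fibre has dimension $n$. I expect this to follow from standard dimension theory for flat morphisms, and in any case it is implicit in the phrase ``$R_1$ reduction''. Because $\mathbb{F}_p$ is perfect, regular points of $X_{\mathbb{F}_p}$ are geometrically regular. Thus $X_{\mathbb{F}_p}$ is geometrically $R_1$, exactly as in the proof of Theorem~\ref{everywhere_R1_reduction}.

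\textbf{Step 3: the remaining hypotheses.} The generic fibre $X_{\Q}$ is normal by assumption, hence geometrically normal since $\Q$ is perfect. It is projective and geometrically connected by assumption. Therefore $X_{\Q}$ has everywhere $R_1$ reduction, and Theorem~\ref{everywhere_R1_reduction} gives that $\Alb_{X_{\Q}/\Q}$ is trivial.

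\textbf{Step 4: from the Albanese to $H^1$.} In characteristic $0$, $\Pic^0_{X_{\Q}/\Q}$ is reduced by Cartier's theorem, so it is an abelian variety. It is dual to $\Alb_{X_{\Q}/\Q}$, hence trivial, and its tangent space is $H^1(X_{\Q},\oo_{X_{\Q}})$. This gives $H^1(X_{\Q},\oo_{X_{\Q}})=0$, matching the ``That is'' clause of Theorem~\ref{everywhere_R1_reduction}.

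The only step requiring any care is the equidimensionality check in Step~2; everything else is formal base change together with Theorem~\ref{everywhere_R1_reduction}.
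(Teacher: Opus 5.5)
Your proposal is correct and follows the paper's own route: the paper gets this corollary by noting that Theorem~\ref{everywhere_R1_reduction} only needs local models, so base-changing the global model to each $\Z_{(p)}$ specializes it to the stated result. Your extra checks are details the paper leaves implicit: equidimensionality of the closed fibres, the perfect-field identifications of $R_1$ with geometrically $R_1$ and of normal with geometrically normal, and Cartier's theorem to pass from a trivial Albanese to $H^1(X_{\Q},\oo_{X_{\Q}})=0$.
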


\begin{corollary}
Let $X_{\Q}$ be a normal, projective, geometrically connected scheme over $\Q$ with $H^1(X_{\Q},\oo_{X_{\Q}})\ne 0$. Then every flat projective model $X$ over $\Z$ admits a prime $p$ such that the fibre $X_p$ is singular along a divisor.
\end{corollary}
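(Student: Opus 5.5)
This statement is a direct specialization of Theorem~\ref{everywhere_R1_reduction}, so the plan is to check that its hypotheses hold and then quote it. I will not rerun the Albanese argument.

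First, the hypotheses on $X_{\Q}$. By assumption $X_{\Q}$ is normal and geometrically connected. It is projective because $X\to\Spec\Z$ is projective and projectivity is stable under base change to $\Spec\Q$.

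Next, the local models. For each prime $p$, let $s=(p)\in\Spec\Z$, so that $\oo_{S,s}=\Z_{(p)}$. Set $X_{(p)}:=X\times_{\Spec\Z}\Spec\Z_{(p)}$. This is flat and projective over $\Z_{(p)}$, since both properties are preserved by base change. Its generic fibre is $X_{\Q}$ and its special fibre is $X_p=X\times\Spec\mathbb{F}_p$. So $X_{(p)}$ is a model of $X_{\Q}$ over $\oo_{S,s}$ in the sense of the introduction.

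The special fibre $X_p$ is $R_1$ by hypothesis. Two points need a brief check here. First, the paper's notion of geometric $R_1$ is stated for equidimensional schemes, so equidimensionality of $X_p$ should be noted. It follows because $X$ is flat and projective over a Dedekind base with irreducible generic fibre $X_{\Q}$ (irreducible since it is normal and connected), so every irreducible component of $X$ dominates $\Spec\Z$ and all fibres have the same dimension; I expect this to be implicit in the paper's conventions. Second, $\mathbb{F}_p$ is perfect, so $R_1$ coincides with geometrically $R_1$, exactly as in the proof of Theorem~\ref{everywhere_R1_reduction}. Hence $X_{\Q}$ has $R_1$ reduction at every prime, that is, everywhere $R_1$ reduction.

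Theorem~\ref{everywhere_R1_reduction} now applies and gives $\Alb_{X_{\Q}/\Q}=0$, equivalently $H^1(X_{\Q},\oo_{X_{\Q}})=0$. The identification of these two vanishings is the one already used in that theorem: the tangent space of $\Pic_{X_{\Q}/\Q}$ is $H^1(X_{\Q},\oo_{X_{\Q}})$, the Picard scheme is reduced in characteristic $0$ by Cartier's theorem, and $(\Pic^0_{X_{\Q}/\Q})_{\re}$ is dual to $\Alb_{X_{\Q}/\Q}$.

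The only step needing any care is the equidimensionality check for the fibres; everything else is formal base change.
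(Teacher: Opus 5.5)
Your argument is correct and follows the paper's route: the statement is the contrapositive of Corollary~\ref{over_Z_trivial_Albanese}, which comes from Theorem~\ref{everywhere_R1_reduction} applied to the local models $X\times_{\Z}\Z_{(p)}$, exactly as you set it up. The one fix is framing, since ``$X_p$ is $R_1$ by hypothesis'' is not a hypothesis of this corollary: argue by contradiction instead. If no $X_p$ were singular along a divisor, every codimension-one point of every $X_p$ would be regular, so each $X_p$ would be $R_1$; your argument then gives $H^1(X_{\Q},\oo_{X_{\Q}})=0$, contradicting the assumption $H^1(X_{\Q},\oo_{X_{\Q}})\ne 0$.
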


\begin{corollary}\label{no_irregular_normal_over_Z}
There is no flat projective family of irregular normal \break varieties over $\Z$.
\end{corollary}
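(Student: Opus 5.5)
We deduce Corollary~\ref{no_irregular_normal_over_Z} from Corollary~\ref{over_Z_trivial_Albanese}. First we fix what is meant. A \emph{flat projective family of irregular normal varieties over $\Z$} is a flat projective morphism $X\to\Spec\Z$ whose fibres are all normal and geometrically connected. We also require the generic fibre to be irregular, i.e.\ $H^1(X_{\Q},\oo_{X_{\Q}})\ne0$, together with (say) geometric normality. We argue by contradiction and suppose such an $X$ exists.

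The first step is to check the hypotheses of Corollary~\ref{over_Z_trivial_Albanese}. The generic fibre $X_{\Q}$ is normal by assumption and geometrically connected. Since $\Q$ is perfect, normality of $X_{\Q}$ is the same as geometric normality, which is what Theorem~\ref{Main_result} requires. For each prime $p$, the closed fibre $X_p$ is a normal scheme of finite type over $\mathbb{F}_p$. By Serre's criterion a normal scheme is $R_1$, so every closed fibre is $R_1$. Moreover $\mathbb{F}_p$ is perfect, so $X_p$ is geometrically $R_1$; this is exactly the observation used in the proof of Theorem~\ref{everywhere_R1_reduction}.

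The second step applies Corollary~\ref{over_Z_trivial_Albanese}, which gives $H^1(X_{\Q},\oo_{X_{\Q}})=0$. This contradicts the irregularity of $X_{\Q}$. If irregularity is imposed only on the closed fibres and not on the generic fibre, we first transfer it to $X_{\Q}$. The natural tool is upper semicontinuity of $h^1(\oo)$ in flat projective families, but this bounds $h^1$ of the generic fibre from above, which points the wrong way. So we would instead read ``family of irregular varieties'' as including the generic fibre, which is the convention we adopt.

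The main obstacle is therefore not mathematical depth but pinning down this definition so that the generic fibre is irregular. Once that is fixed, the corollary is a direct combination of two facts: normal implies $R_1$ over a perfect field, and Corollary~\ref{over_Z_trivial_Albanese}.
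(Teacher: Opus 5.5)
Your proposal is correct and is exactly the deduction the paper intends: normal closed fibres are $R_1$ by Serre's criterion, hence geometrically $R_1$ over the perfect fields $\mathbb{F}_p$. Corollary~\ref{over_Z_trivial_Albanese} then forces $H^1(X_{\Q},\oo_{X_{\Q}})=0$, contradicting the irregularity of the generic fibre, and your convention that \emph{irregular} applies to the generic fibre matches the paper's formulation in the abstract.
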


\begin{corollary}
There is no nonzero abelian variety over $\Q$ with everywhere $R_1$ reduction.
\end{corollary}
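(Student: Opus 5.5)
The statement to prove is the last corollary: there is no nonzero abelian variety over $\Q$ with everywhere $R_1$ reduction. I would deduce it directly from Theorem~\ref{everywhere_R1_reduction}, applied to the abelian variety itself.

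Let $A_{\Q}$ be an abelian variety over $\Q$ with everywhere $R_1$ reduction. Then $A_{\Q}$ is smooth, hence normal, and it is projective and geometrically connected. So Theorem~\ref{everywhere_R1_reduction} applies and shows that $\Alb_{A_{\Q}/\Q}$ is trivial.

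It remains to use that an abelian variety is its own Albanese variety. Choose the base point to be the identity $0\in A_{\Q}(\Q)$. The identity map $A_{\Q}\to A_{\Q}$ satisfies the universal property of the Albanese: every morphism $A_{\Q}\to B$ to an abelian variety sending $0$ to $0$ is a homomorphism, by rigidity. Hence $\Alb_{A_{\Q}/\Q}\cong A_{\Q}$, so $A_{\Q}=0$.

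Equivalently, Theorem~\ref{everywhere_R1_reduction} gives $H^1(A_{\Q},\oo_{A_{\Q}})=0$. Since this space has dimension $\dim A_{\Q}$, we again get $\dim A_{\Q}=0$, so $A_{\Q}$ is trivial.

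There is no real obstacle. The only points to check are that the hypotheses of Theorem~\ref{everywhere_R1_reduction} hold, which they do since $A_{\Q}$ is smooth and projective, and the standard identification of an abelian variety with its own Albanese variety.
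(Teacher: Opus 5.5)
Your proof is correct and follows the paper's intended route: the paper presents this corollary as a direct specialization of Theorem~\ref{everywhere_R1_reduction} to $X_{\Q}=A_{\Q}$. An abelian variety is normal, projective, geometrically connected, and is its own Albanese variety (equivalently $h^1(A_{\Q},\oo_{A_{\Q}})=\dim A_{\Q}$), so triviality of $\Alb_{A_{\Q}/\Q}$ forces $A_{\Q}=0$.
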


More generally, Theorem~\ref{Main_result} systematically generalizes results for abelian varieties over a number field $K$ with everywhere semi-abelian reduction to normal projective varieties over $K$ via their Albanese varieties, replacing semi-abelian (resp.~good) reduction with $N_1$ (resp.~$R_1$) reduction.

For instance, Brumer--Kramer \cite[Thm.~1]{BK01} and Schoof \cite[Thm.~1.1]{Sch05} prove that, for $l=2,3,5,7$ or $13$, there is no nonzero abelian variety over $\Q$ with good reduction at all primes $\ne l$ and semi-abelian reduction at $l$. Theorem~\ref{Main_result} generalizes this to the following

\begin{corollary}\label{R1_everywhere_except_N1_at_a_prime} 
Let $l\in\{2,3,5,7,13\}$. If $X_{\Q}$ is a normal, projective, and geometrically connected scheme over $\Q$ with $R_1$ reduction at all primes $\ne l$ and $N_1$ reduction at $l$, then $\Alb_{X_{\Q}/\Q}$ is trivial. 
\end{corollary}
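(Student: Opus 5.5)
The plan is to deduce Corollary~\ref{R1_everywhere_except_N1_at_a_prime} directly from Theorem~\ref{Main_result} combined with the Brumer--Kramer/Schoof nonexistence theorem, exactly parallel to the proof of Theorem~\ref{everywhere_R1_reduction}.

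\emph{Step 1: pass from $N_1$/$R_1$ to their geometric versions.} All residue fields $\mathbb{F}_p$ of $\Spec\Z$ are perfect. Over a perfect field, a regular point of a scheme locally of finite type is geometrically regular, so $R_1$ reduction at $p\neq l$ is geometrically $R_1$ reduction. For the $N_1$ condition at $l$, I would check against Definition~\ref{definition_node} that being a node is stable under base change to an algebraic closure when the ground field is perfect. A node is presumably defined via the completed local ring becoming $\bar k[[\dots]][[x,y]]/(xy)$-type after passing to $\bar k$, or via a quadratic-form condition. I expect this compatibility is either built into the definition or follows from it (the definition of ``node'' is likely already geometric, so ``$N_1$'' over a perfect field should coincide with ``geometrically $N_1$'').

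This step is the only point needing care: if the paper's notion of node is defined only geometrically, then ``$N_1$ reduction'' should be read as geometrically $N_1$, and there is nothing to prove. Otherwise, the first thing I would try is to use perfectness: the codimension-one point has a separable residue field, so the singularity type can be read off after separable (hence étale) base change.

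\emph{Step 2: apply the main theorem at each prime.} With $S=\Spec\Z$, Theorem~\ref{Main_result} shows that $\Alb_{X_{\Q}/\Q}$ has good reduction at every prime $p\neq l$ and semi-abelian reduction at $l$.

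\emph{Step 3: conclude.} By \cite[Thm.~1]{BK01} and \cite[Thm.~1.1]{Sch05}, for $l\in\{2,3,5,7,13\}$ no nonzero abelian variety over $\Q$ has this reduction behaviour. Hence $\Alb_{X_{\Q}/\Q}=0$.
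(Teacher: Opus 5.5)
Your proposal is correct and follows the paper's argument. Perfectness of each $\mathbb{F}_p$ converts $R_1$/$N_1$ reduction into the geometric versions. Theorem~\ref{Main_result} then gives good reduction away from $l$ and semi-abelian reduction at $l$, and Brumer--Kramer/Schoof finish the proof. Your hedge in Step~1 resolves cleanly: in Definition~\ref{Def_geo_N1}, $N_1$ and geometrically $N_1$ differ only in ``regular'' versus ``geometrically regular'', since nodes (Definition~\ref{definition_node}) are already defined via the strict henselization, and over a perfect field these two conditions coincide. Likewise, normal and geometrically normal coincide over $\Q$.
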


\begin{proof}
By Theorem~\ref{Main_result}, $\Alb_{X_{\Q}/\Q}$ has good reduction at all primes $\ne l$ and has semi-abelian reduction at $l$.
\end{proof}

As another example, a result of Schoof \cite[Thm.~1.2]{Sch05} for $l=11$ extends immediately to 

\begin{corollary}
If $X_{\Q}$ is a normal, projective, and geometrically connected scheme over $\Q$ with $R_1$ reduction outside $11$ and $N_1$ reduction at $11$, then $\Alb_{X_{\Q}/\Q}$ is isogenous over $\Q$ 
to a power of the Jacobian of the modular curve $X_0(11)$.
\end{corollary}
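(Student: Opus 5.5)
The plan is to deduce the corollary from Theorem~\ref{Main_result} combined with Schoof's classification \cite[Thm.~1.2]{Sch05}, in the same way Corollary~\ref{R1_everywhere_except_N1_at_a_prime} was deduced from Brumer--Kramer and Schoof.

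\emph{Step 1: pass to geometric conditions.} Every closed point of $\Spec\Z$ has residue field $\mathbb{F}_p$, which is perfect. Over a perfect field, a regular point is geometrically regular. I expect the same to hold for nodes: the notion of node in Definition~\ref{definition_node} should be stable under separable base extension, and over a perfect field every algebraic extension is separable. So $R_1$ coincides with geometrically $R_1$, and $N_1$ coincides with geometrically $N_1$. This is the same observation used in the proof of Theorem~\ref{everywhere_R1_reduction}.

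\emph{Step 2: apply the main theorem.} Theorem~\ref{Main_result} is applied prime by prime, with $S=\Spec\Z$ and $s=(p)$. It gives that $A=\Alb_{X_{\Q}/\Q}$ has good reduction at every prime $p\ne 11$ and semi-abelian reduction at $11$. Theorem~\ref{Main_result} only asks for a local model over $\Z_{(p)}$ at each prime, which is exactly what the hypothesis provides.

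\emph{Step 3: invoke Schoof.} By \cite[Thm.~1.2]{Sch05}, an abelian variety over $\Q$ with good reduction outside $11$ and semi-abelian reduction at $11$ is isogenous over $\Q$ to a power of $J_0(11)$. Applying this to $A$ gives the conclusion.

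The only step that needs care is Step 1 in the $N_1$ case, namely checking that a node over $\mathbb{F}_{11}$ remains a node after passing to $\ol{\mathbb{F}}_{11}$. If the paper's definition of a node is phrased via the completed local ring after a separable base change, this is immediate. Otherwise, one can read the hypothesis ``$N_1$ reduction'' directly as geometrically $N_1$, in the same spirit as the proof of Corollary~\ref{R1_everywhere_except_N1_at_a_prime}.
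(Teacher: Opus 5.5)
Your proof is correct and follows the same route as the paper's one-line deduction: Theorem~\ref{Main_result} gives good reduction of $\Alb_{X_{\Q}/\Q}$ outside $11$ and semi-abelian reduction at $11$, and Schoof's \cite[Thm.~1.2]{Sch05} then gives the isogeny to a power of $J_0(11)$. The extra care you flag in Step~1 is not needed: by Definition~\ref{Def_geo_N1}, ``$N_1$'' and ``geometrically $N_1$'' differ only in ``regular'' versus ``geometrically regular'', while nodes in the sense of Definition~\ref{definition_node} appear in both, so over the perfect field $\mathbb{F}_{11}$ the two notions coincide with no base-change argument for nodes (likewise, normality of $X_{\Q}$ is geometric normality because $\Q$ is perfect).
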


Corollary~\ref{R1_everywhere_except_N1_at_a_prime} also implies the following

\begin{corollary}\label{slc_reduction}
Let $l\in\{3,5,7,13\}$. If $X_{\Q}$ is a normal, projective, and geometrically connected scheme over $\Q$ with $R_1$ reduction at all primes $\ne l$ and semi-log canonical reduction at $l$, then $\Alb_{X_{\Q}/\Q}$ is trivial. 
\end{corollary}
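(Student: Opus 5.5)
The statement to prove is Corollary~\ref{slc_reduction}. I would deduce it from Corollary~\ref{R1_everywhere_except_N1_at_a_prime}: it suffices to show that semi-log canonical reduction at $l$ implies $N_1$ reduction at $l$. The excluded prime $l=2$ is exactly where this can fail, and the hypothesis $l\in\{3,5,7,13\}$ is what makes the comparison work. Since $\mathbb{F}_l$ is perfect, $N_1$ and geometrically $N_1$ coincide for the special fibre. Moreover, being slc is stable under the base change $\mathbb{F}_l\to\ol{\mathbb{F}}_l$ because the extension is separable. So the task is a purely local statement over an algebraically closed field of characteristic $\ne2$: an slc variety $Y$ is $N_1$.

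For the local statement I would use the definition of slc. An slc scheme $Y$ is $S_2$, Gorenstein in codimension one, and has at worst nodes in codimension one (ordinary double crossings). Let $\eta\in Y$ be a codimension-one point that is not regular. By the codimension-one part of the definition, $\oo_{Y,\eta}$ is a one-dimensional reduced local ring whose completion is of the form $k'[[x,y]]/(xy)$, possibly up to a twist by a quadratic extension of residue fields. In characteristic $\ne2$, this is exactly the notion of node in Definition~\ref{definition_node}. I would check this against that definition: a conic $x^2-ay^2$ with $a$ a nonsquare over the residue field is allowed, and it becomes split after passing to the separable closure. Regular codimension-one points of $Y$ are geometrically regular, again because the base field is perfect. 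Hence $Y_{\ol{\mathbb{F}}_l}$, and so $Y$, is geometrically $N_1$.

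The main obstacle is matching conventions. Some authors define slc via a normalization with a conductor condition rather than by explicitly requiring nodes in codimension one, and in characteristic $2$ the form $x^2+xy+ay^2$ versus $xy$ complicates matters. If the paper's definition is via the conductor, I would argue as follows: a reduced $S_2$ one-dimensional local ring that is Gorenstein, with a double point whose normalization is $2$-to-$1$ over the conductor with no ramification, has completion $k[[x,y]]/(xy)$. Unramifiedness is automatic in odd characteristic. With $N_1$ reduction at $l$ and $R_1$ reduction elsewhere, Corollary~\ref{R1_everywhere_except_N1_at_a_prime} gives that $\Alb_{X_{\Q}/\Q}$ is trivial.
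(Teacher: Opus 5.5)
Your argument is correct and is essentially the paper's: by Koll\'ar's definition an slc scheme is regular or an intrinsic node at each codimension-one point, and Lemma~\ref{intrinsic_node_is_node} turns intrinsic nodes into nodes (separability is automatic over the perfect field $\mathbb{F}_l$, and $l\ne2$), so the special fibre is $N_1$ (equivalently geometrically $N_1$) and Corollary~\ref{R1_everywhere_except_N1_at_a_prime} applies. The passage to $\ol{\mathbb{F}}_l$ is unnecessary, and your fallback conductor criterion is false as stated: the tacnode $y^2=x^4$ is a Gorenstein double point whose normalization is an unramified double cover over the conductor, so you would also need the conductor to be reduced; with Koll\'ar's definition, though, that fallback is never needed.
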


\begin{proof}
By definition, a semi-log canonical scheme is either regular or nodal in the sense of Definition~\ref{intrinsic_node} at every codimension-one point, and is thus geometrically $N_1$ if $\ch\ne2$ (Lemma~\ref{intrinsic_node_is_node}).
\end{proof}

We interpret our results from the moduli stack perspective. Following Koll\'ar--Shepherd-Barron \cite{KSB88}, the KSB moduli stack $\overline{\mathscr{M}}_{2,v}$ of stable surfaces of fixed volume $v$ parametrizes families of stable surfaces (cf. \cite[Def.~(1.5.a)]{Pat17} and \cite{Kol23b}). While classical theorems of Fontaine and Abrashkin forbid the existence of $\Z$-points within the smooth irregular locus, Theorem~\ref{everywhere_R1_reduction} extends this nonexistence to the normal irregular locus. Moreover, Corollary~\ref{slc_reduction} implies that this absence of $\Z$-points persists even if one allows a genuine semi-log canonical degeneration at a specific prime $l\in\{3,5,7,13\}$---that is, even if these $\Z$-points meet the boundary of the moduli stack $\overline{\mathscr{M}}_{2,v}$ at $l$.

\begin{remark}
The conditions $R_1$ and $N_1$ extend to higher codimensions via Serre's condition $R_a$ and a corresponding notion $N_a$. The author is pursuing these extensions and studying their arithmetic applications.
\end{remark}

\subsection{Obstructions}\label{Obstructions}
Recall that the proof in \cite[Thm.~2.4]{DM69} that a curve with stable reduction has a Jacobian with semi-abelian reduction proceeds as follows: If $C_K$ admits a regular model $\mathscr{C}$ over $S$ whose special fibre $\mathscr{C}_k$ is a reduced nodal curve, then by a theorem of Raynaud \cite{Ray70} (see also \cite[Thm.~9.5/4]{BLR90}) the identity component $\mathscr{N}^0$ of the N\'eron model of its Jacobian $\Pic^0_{C_K/K}$ coincides with $\Pic^0_{\mathscr{C}/S}$, hence $\mathscr{N}^0_k\cong\Pic^0_{\mathscr{C}_k/k}$, which is a semi-abelian variety. 

The isomorphism $\mathscr{N}^0\cong\Pic^0_{\mathscr{C}/S}$ established for curves does not \mbox{extend} to higher dimensions. The main obstruction is that $\Pic_{X/S}$ may fail to be smooth over $S$ for families $X\to S$ of relative dimension at least 2. More precisely, the obstruction to the formal smoothness of $\Pic_{X/S}$ lies in $H^2(X_k,\oo_{X_k})$ (cf. \cite[Prop.~8.4/2]{BLR90}), which vanishes for curves. In practice, $\Pic_{X/S}$ can fail to be smooth when $h^1(X_k,\oo_{X_k})$ jumps, or when $\Pic^0$ of a fibre is nonreduced (which can occur in characteristic $p$ even for smooth projective surfaces over an algebraically closed field; cf. \cite{Igu55}; see also \cite{Lie09}). Neither situation arises in the curve case. Thus, whenever $\Pic_{X/S}$ is not smooth, it cannot be a N\'eron model. This failure precludes a direct generalization of the curve proof and requires a different approach. Our proof relies on the N\'eron model of $(\Pic^0_{X_K/K})_{\re}$ rather than on $\Pic_{X/S}$, so none of these phenomena obstructs the argument.

\subsection{Comparison}\label{Comparison} A higher-dimensional analogue of the implication from stable reduction to semi-abelian reduction follows from classical results of Grothendieck and Serre--Tate. If a smooth, proper, geometrically connected scheme $X_K$ has semi-stable (resp.~good) reduction, Grothendieck's theory of nearby and vanishing cycles implies that the monodromy action of the inertia group on $H^1(X_{\ol K},\Q_{\ell})$ is unipotent (resp.~trivial). Moreover, one has $H^1(X_{\ol K},\Q_{\ell})\cong H^1((\Alb_{X_K/K})_{\ol K},\Q_{\ell})$ as $\operatorname{Gal}(\ol K/K)$-representations. By Grothendieck's criterion for semi-abelian reduction of abelian varieties (resp. the N\'eron--Ogg--Shafarevich criterion), this forces $\Alb_{X_K/K}$ to have semi-abelian (resp. good) reduction.

While this classical approach provides an explicit cohomological route, it operates under strict geometric constraints: it requires a regular model $X$, a smooth generic fibre $X_K$, and a reduced normal crossings special fibre. Our geometric approach via Bertini theorems bypasses this machinery and allows us to work with significantly more general models. Assuming only that the model $X$ is projective, we impose no regularity conditions on it and allow $X_K$ to be merely geometrically normal---advantages particularly relevant in dimensions greater than three, where resolution of singularities remains open for models in mixed characteristic and varieties in positive characteristic, respectively. Crucially, our approach requires control over singularities of the special fibre only in codimension one; the special fibre may be nonreduced and possess arbitrarily bad singularities in codimension two or higher. This flexibility enables the arithmetic nonexistence results in the singular setting discussed above.

\subsection{Strategy}\label{Strategy}

The technical crux of this note is executing the inductive step, which requires proving that our chosen reduction type is preserved under taking suitable hypersurface sections. Motivated by classical Bertini theorems, we seek the weakest singularity condition on the special fibre that is inherited by such sections while still ensuring semi-abelian reduction of the Albanese variety. This leads us to formulate the notion of a \emph{node of codimension one} (Definition~\ref{definition_node}), tailored to satisfy three conditions:

\begin{enumerate}
\item It is preserved under taking appropriate hypersurface sections.
\item It specializes to the classical notion of nodes on curves.
\item It matches the notion of nodes underlying KSBA moduli theory (when the residue field is perfect of odd characteristic).
\end{enumerate}

While defining the node algebraically via completed strict henselizations is convenient for verifying conditions $(2)$ and $(3)$, proving condition $(1)$ is less straightforward. Inspired by a remark in Bosch, L\"utkebohmert, and Raynaud \cite[p.~246]{BLR90} suggesting an alternative definition of nodes via \'etale roofs, we bridge this gap by proving a geometric characterization of the node via \'etale morphisms using Artin approximation (Theorem~\ref{Geometric_characterization_of_nodes}). Since we are not aware of an explicit reference for this precise formulation, we include a proof. This geometric characterization allows us to establish our main technical result: a Bertini theorem for nodes of codimension one (Theorem~\ref{N1_Bertini_infinite_field}), thereby fulfilling condition $(1)$.

\subsection{Sketch of the proof} The proof of Theorem~\ref{Main_result} proceeds by induction on dimension. If $\dim X_K=1$, geometric normality yields smoothness, and the conclusion follows from \cite[Thm.~2.4]{DM69} (or more generally \cite[Cor.~9.7/2]{BLR90}). Suppose $\dim X_K\ge2$. If $X_K$ has geometrically $N_1$ reduction over a DVR, Theorem~\ref{geo_N1_reduction_Bertini_DVR} produces a hypersurface section $H_K\subset X_K$ of degree $d\gg0$ that also has geometrically $N_1$ reduction over the same base. Furthermore, as $X_K$ is projective and geometrically normal, we show that the natural morphism of abelian varieties over $K$
\[
(\Pic^0_{X_K/K})_{\re}\to(\Pic^0_{H_K/K})_{\re}
\]
has finite kernel for $d\gg0$. By the induction hypothesis, $(\Pic^0_{H_K/K})_{\re}$ has semi-abelian reduction, hence so do $(\Pic^0_{X_K/K})_{\re}$ and its dual $\Alb_{X_K/K}$.

\smallskip
\textbf{Notation.} Throughout this note, $S$ denotes a Dedekind scheme (i.e. a Noetherian, normal, connected scheme of dimension one) with fraction field $K$, and $s\in S$ denotes a closed point. For a scheme $X$, a \emph{codimension-one point} $x\in X$ is a point such that $\dim\oo_{X,x}=1$. We denote by $\ka(x)$ the residue field of a point $x$. A \emph{pointed \'etale morphism} $(U,u)\to(X,x)$ is an \'etale morphism $f\colon U\to X$ such that $f(u)=x$. A \emph{generic point} of a scheme $Z$ refers to the generic point of an irreducible component of $Z$. 

\subsection*{Acknowledgments} I am grateful to Michael McQuillan for an enlightening question and for helpful discussions during the early stages of this note, to K\k{e}stutis \v{C}esnavi\v{c}ius for prompt and helpful feedback on Theorem~\ref{Geometric_characterization_of_nodes}, and to J\'anos Koll\'ar for prompt and helpful comments, in particular for pointing out that an earlier question (now Corollary~\ref{SAR_iff_geo_N1}) is answered in equal characteristic $0$. I would like to thank Jungkai Alfred Chen and Ching-Jui Lai for helpful conversations and for their support. I thank my family, especially my wife, Wei-Na, for their continued support.

This work is supported by the National Science and Technology Council of Taiwan (grant numbers 114-2811-M-006-030 and 114-2639-M-002-009-ASP), and was partially supported by the National Science Foundation (grant number DMS-1802460) and a grant from the Simons Foundation.

\section{Nodes}

In this section, we define the notion of a \emph{node of codimension one} (Definition~\ref{definition_node}) and compare it with the corresponding notions in the Stacks Project (Definition~\ref{Stacks_node}) and in Koll\'ar's book~\cite{Kol13} (Definition~\ref{intrinsic_node}).

\medskip
Let $X$ be a scheme locally of finite type over a field $k$, and let $x\in X$ be a codimension-one point (i.e. $\dim\oo_{X,x}=1$). 

\begin{definition}\label{definition_node}
A point $x\in X$ is a \emph{node} if its residue field $\ka(x)$ is separable over $k$ and, for some (equivalently any) separable algebraic closure $\ka(x)\hookrightarrow\ka(x)^{\sep}$, there is a $k$-algebra isomorphism of completed strict henselizations
\[
\widehat{\oo_{X,x}^{\sh}}\cong\ka(x)^{\sep}[\![a,b]\!]/(ab)
\]
where the $k$-algebra structure on the right is induced by $k\hookrightarrow\ka(x)\hookrightarrow\ka(x)^{\sep}$. 
\end{definition}

\begin{remark}
The separability of $\ka(x)/k$ in Definition~\ref{definition_node} is necessary for the existence of an \'etale roof appearing in a geometric characterization of nodes (Theorem~\ref{Geometric_characterization_of_nodes}). This requirement is also consistent with the treatment of nodes of curves in \cite[\href{https://stacks.math.columbia.edu/tag/0C4D}{Tag 0C4D}]{stacks-project}, where the residue fields are required to be separable over the base field.
\end{remark}

\begin{remark} We relate Definition~\ref{definition_node} to standard notions of nodes in the literature that are relevant to our proofs.

\begin{enumerate}
    \item When $\dim X=1$ and $k$ is algebraically closed, Definition~\ref{definition_node} matches the classical notion of an ordinary double point, such as the one used in \cite[p.~246, following Def.~9.2/6]{BLR90}.  
    \item  When $\dim X=1$ and $k$ is any field, Definition~\ref{definition_node} coincides with nodes defined in the Stacks Project \cite[\href{https://stacks.math.columbia.edu/tag/0C47}{Tag 0C47}]{stacks-project}; see Definition~\ref{Stacks_node} and Lemma~\ref{stacks_node_is_node}. 
    \item  In any dimension, when $\ch k\ne2$, Definition~\ref{definition_node} is equivalent to the notion of nodes in KSBA moduli theory \cite[Item~1.41]{Kol13} precisely when $\ka(x)/k$ is separable; see Definition~\ref{intrinsic_node} and Lemma~\ref{intrinsic_node_is_node}. When $\ch k=2$, our definition is strictly stronger even when $\ka(x)/k$ is separable (see Example~\ref{example_Whitney_umbrella}).
\end{enumerate}
\end{remark}

\subsection{Stacks node}

\begin{definition}[cf. {\cite[\href{https://stacks.math.columbia.edu/tag/0C47}{Tag 0C47}]{stacks-project}}]\label{Stacks_node}
Let $X$ be a 1-dimensional scheme locally of finite type over a field $k$. A closed point $x\in X$ is a \emph{Stacks node} if for an algebraic closure $k\subset\ol k$ there exists a point $\ol x\in\ol X:=X\times_k\ol k$ mapping to $x$ such that $\widehat{\oo_{\ol X,\ol x}}\cong\ol k[\![a,b]\!]/(ab)$.
\end{definition}

For a list of equivalent conditions of Stacks node (Definition~\ref{Stacks_node}), see \cite[\href{https://stacks.math.columbia.edu/tag/0C4D}{Tag 0C4D}]{stacks-project}.

\begin{lemma}\label{stacks_node_is_node}
Let $X$ be a scheme locally of finite type over a field $k$. Then a point $x\in X$ is a node (Definition~\ref{definition_node}) if and only if $\ka(x)/k$ is separable, $\oo_{X,x}$ is 1-dimensional, reduced, has $\delta$-invariant 1, and has 2 geometric branches (i.e. it verifies item~(6) of \cite[\href{https://stacks.math.columbia.edu/tag/0C4D}{Tag 0C4D}]{stacks-project}). In particular, when $\dim X=1$, a closed point $x\in X$ is a node (Definition~\ref{definition_node}) if and only if it is a Stacks node (Definition~\ref{Stacks_node}).
\end{lemma}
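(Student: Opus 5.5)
We will compare both sides through the completed strict henselization $R:=\widehat{\oo_{X,x}^{\sh}}$. Throughout, let $A=\oo_{X,x}$ and $\kappa=\ka(x)$, and assume $\kappa/k$ separable, which is required on both sides.

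The key input is that the passage $A\to A^{\sh}\to R$ is faithfully flat, with regular fibres in the appropriate sense. Since $X$ is of finite type over a field, $A$ is excellent, so this map is formally regular. It follows that $A$ is reduced if and only if $R$ is reduced. The number of geometric branches of $A$ equals the number of minimal primes of $A^{\sh}$, and by excellence this equals the number of minimal primes of $R$. The $\delta$-invariant is also stable under strict henselization and completion. Concretely, the normalization commutes with these operations by excellence, and $\delta=\operatorname{length}(\tilde A/A)$ measured over the (geometric) residue field. Strict henselization is an ind-étale local extension with residue field $\kappa^{\sep}$, so it preserves lengths, and completion does as well. Because $\kappa/k$ is separable, I will take "item (6)" of Tag 0C4D to mean exactly these invariants, computed via $A^{\sh}$. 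Dimension is preserved throughout.

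\emph{($\Rightarrow$)} If $R\cong\kappa^{\sep}[\![a,b]\!]/(ab)$, then $R$ is reduced and one-dimensional with two minimal primes $(a)$ and $(b)$. Its normalization is $\kappa^{\sep}[\![a]\!]\times\kappa^{\sep}[\![b]\!]$, whose quotient by $R$ has length $1$. Transferring these facts back to $A$ via the previous paragraph gives all the listed properties.

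\emph{($\Leftarrow$)} Now $R$ is a one-dimensional complete reduced local ring with algebraically... rather, separably closed residue field $\kappa^{\sep}$, with two minimal primes and $\delta=1$. Its normalization $\tilde R=\tilde R_1\times\tilde R_2$ is a product of two complete DVRs. Since $\delta=1$, the residue fields of the $\tilde R_i$ agree with $\kappa^{\sep}$; a nontrivial residue extension would add to $\delta$, and it must be purely inseparable, which the length count excludes. Hence, after choosing coefficient fields, $\tilde R_i\cong\kappa^{\sep}[\![t_i]\!]$. The inclusion $R\subset\tilde R$ has colength $1$, and $R$ is local, so $R$ contains the conductor $\mathfrak m_{\tilde R}=(t_1)\times(t_2)$. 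Therefore
\[
R=\kappa^{\sep}+(t_1)\times(t_2)\cong\kappa^{\sep}[\![a,b]\!]/(ab),
\]
with $a=(t_1,0)$ and $b=(0,t_2)$.

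\textbf{Main obstacle.} The delicate point is to produce this isomorphism as a \emph{$k$-algebra} isomorphism, compatible with $k\subset\kappa\subset\kappa^{\sep}$. This needs a coefficient field of $R$ containing the image of $k$. Since $\kappa^{\sep}/k$ is separable algebraic, hence formally étale, the embedding $k\to R$ extends uniquely to a lift $\kappa^{\sep}\to R$ via Hensel's lemma. This is exactly where the separability hypothesis enters, and the same lift must be used for $R$ and for the $\tilde R_i$. The "in particular" statement then follows by comparing with Tag 0C4D(6) for curves, where closed points of a one-dimensional $X$ have $\dim\oo_{X,x}=1$ and Stacks nodes already carry the separability condition.
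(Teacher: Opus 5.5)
\textbf{Comparison with the paper.} Your forward direction, and the transfer of reducedness, dimension, number of geometric branches and $\delta$-invariant along $\mathcal{O}_{X,x}\to\mathcal{O}_{X,x}^{\sh}\to R=\widehat{\mathcal{O}_{X,x}^{\sh}}$, match the paper. You phrase the transfer through excellence, where the paper cites individual Stacks tags.

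For the converse you take a genuinely different route. The paper applies Stacks Tag 0C4A to $R$ and obtains a $k$-algebra isomorphism $R\cong\kappa(x)^{\sep}[\![u,v]\!]/(q)$ with $q$ a nondegenerate binary quadratic form. It then checks by hand that $q$ splits into two independent linear forms over the separably closed field, with a separate argument in characteristic $2$.

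You instead compute $R$ from its normalization $\tilde R=\tilde R_1\times\tilde R_2$. Let $L_1,L_2$ be the residue fields of the $\tilde R_i$. Since $R$ is local, $R$ maps onto the diagonal $\Delta(\kappa^{\sep})\subset L_1\times L_2$, so there is a surjection $\tilde R/R\to(L_1\times L_2)/\Delta(\kappa^{\sep})$. Then $\delta=1$ forces $L_1=L_2=\kappa^{\sep}$ and makes this surjection an isomorphism. Hence $\operatorname{Jac}(\tilde R)\subset R$, and $R=\{(f_1,f_2): f_1(0)=f_2(0)\}\cong\kappa^{\sep}[\![a,b]\!]/(ab)$. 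This argument is self-contained and characteristic-free: no quadratic forms and no char-$2$ case. The price is that you must construct the $k$-algebra structure yourself, whereas the paper gets it from Tag 0C49.

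\textbf{The error: the coefficient field step.} You assert that $\kappa^{\sep}/k$ is separable \emph{algebraic}, hence formally \'etale, so that $k\to R$ extends uniquely to $\kappa^{\sep}\to R$ by Hensel's lemma. The lemma is about arbitrary points, though. In the cases the paper actually needs, namely codimension-one points of $X$ with $\dim X\ge2$, $\kappa(x)$ has positive transcendence degree over $k$. Then $\kappa^{\sep}/k$ is separable but not algebraic and not formally \'etale, and lifts are not unique.

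What you need is only the existence of a coefficient field of $R$ containing $k$. That is still true, because separable extensions are formally smooth (the paper uses this elsewhere via Stacks Tag 0C34). Concretely:
\begin{enumerate}
\item Choose a separating transcendence basis $t_1,\dots,t_m$ of $\kappa(x)/k$ and lift it arbitrarily to $R$.
\item For nonzero $p\in k[T_1,\dots,T_m]$, the residue of $p(\tilde t_1,\dots,\tilde t_m)$ is $p(t_1,\dots,t_m)\neq0$, so this element is a unit. This gives $k(t_1,\dots,t_m)\to R$.
\item The extension $\kappa^{\sep}/k(t_1,\dots,t_m)$ is separable algebraic. Hensel's lemma lifts each finite subextension uniquely, so these lifts glue to $\kappa^{\sep}\to R$.
\end{enumerate}
With this repair, and using the images of this coefficient field in the $\tilde R_i$ as their coefficient fields (as you say), your argument is complete.

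\textbf{A minor point.} In the ``in particular'' clause, a closed point of a one-dimensional $X$ need not have $\dim\mathcal{O}_{X,x}=1$, since it may be an isolated point. This is harmless, because both notions of node force $\dim\mathcal{O}_{X,x}=1$.
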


\begin{proof}
Note that $\oo_{X,x}$ is Nagata since $X$ is locally of finite type over a field \cite[\href{https://stacks.math.columbia.edu/tag/035B}{Tag 035B}]{stacks-project}.

Let $x\in X$ be a node; by definition, $\ka(x)/k$ is separable. Since $\widehat{\oo_{X,x}^{\sh}}$ is 1-dimensional, so is $\oo_{X,x}$ \cite[\href{https://stacks.math.columbia.edu/tag/07NV}{Tag 07NV}, \href{https://stacks.math.columbia.edu/tag/06LK}{Tag 06LK}]{stacks-project}. Because $\widehat{\oo_{X,x}^{\sh}}\cong\ka(x)^{\sep}[\![a,b]\!]/(ab)$ is reduced, so is $\oo_{X,x}^{\sh}$ \cite[\href{https://stacks.math.columbia.edu/tag/07NZ}{Tag 07NZ}]{stacks-project} and hence $\oo_{X,x}$ \cite[\href{https://stacks.math.columbia.edu/tag/06DH}{Tag 06DH}]{stacks-project}. As $\widehat{\oo_{X,x}^{\sh}}$ has $\delta$-invariant 1, so does $\oo_{X,x}$ \cite[\href{https://stacks.math.columbia.edu/tag/0C3W}{Tag 0C3W}]{stacks-project}. Since $\widehat{\oo_{X,x}^{\sh}}$ has 2 geometric branches, so do $\oo_{X,x}^{\sh}$ \cite[\href{https://stacks.math.columbia.edu/tag/0C2D}{Tag 0C2D}]{stacks-project} and $\oo_{X,x}$ by definition \cite[\href{https://stacks.math.columbia.edu/tag/0C26}{Tag 0C26}]{stacks-project}.

Conversely, since $\ka(x)/k$ is separable, so is $\ka(x)^{\sep}/k$; since $\oo_{X,x}$ is a reduced Nagata local ring of dimension 1, so is $\widehat{\oo_{X,x}^{\sh}}$ \cite[\href{https://stacks.math.columbia.edu/tag/0C3V}{Tag 0C3V}]{stacks-project}. 
Since $\oo_{X,x}$ has $\delta$-invariant 1 and has 2 geometric branches, so does $\widehat{\oo_{X,x}^{\sh}}$. Applying \cite[\href{https://stacks.math.columbia.edu/tag/0C4A}{Tag 0C4A}]{stacks-project} to $\widehat{\oo_{X,x}^{\sh}}$ yields that it satisfies condition (3) of \cite[\href{https://stacks.math.columbia.edu/tag/0C49}{Tag 0C49}]{stacks-project}, namely, there exists a $k$-algebra isomorphism $\widehat{\oo_{X,x}^{\sh}}\cong\ka(x)^{\sep}[\![u,v]\!]/(q)$ with $q=\alpha u^2+\beta uv+\gamma v^2$ a nondegenerate quadratic form over $\ka(x)^{\sep}$. It remains to show that every such $q$ is equivalent to $uv$ over $\ka(x)^{\sep}$.

Nondegeneracy of $q$ means $\beta^2-4\alpha\gamma\ne0$. If $\ch\ka(x)^{\sep}\ne2$, $q$ factors into two distinct linear factors as $\ka(x)^{\sep}$ contains all square roots; hence it is equivalent to $uv$. Suppose $\ch\ka(x)^{\sep}=2$. The polynomial $\alpha T^2+\beta T+\gamma$ has derivative $\beta\ne 0$, and is therefore separable. Its roots $z_1\neq z_2$ lie in $\ka(x)^{\sep}$. Thus, if $\alpha\ne0$, $q=\alpha(u-z_1 v)(u-z_2 v)$; if $\alpha=0$, $q=v(\beta u+\gamma v)$. In both cases $q$ factors into two linearly independent linear forms over $\ka(x)^{\sep}$ and hence is equivalent to $uv$. This completes the proof.
\end{proof}

\subsection{Intrinsic node}

\begin{definition}[cf. {\cite[1.41]{Kol13}}; see also {\cite[Def.~3.1]{Tan16}}, {\cite[Def.~2.1]{Pos24}}]\label{intrinsic_node}
Let $X$ be a scheme. A point $x\in X$ is an \emph{intrinsic node} (called simply a \emph{node} in \cite[1.41]{Kol13}) if there exists a ring isomorphism $\oo_{X,x}\cong R/(f)$, where $(R,\m,\ka)$ is a 2-dimensional regular local ring, $f\in\m^2$, and its image in $\m^2/\m^3$ is not a square up to a unit in $\ka$.
\end{definition}

\begin{remark}
The formulation in \cite[1.41]{Kol13} requires that $f$ not be a square in $\m^2/\m^3$. This condition, when working over fields that are not algebraically closed, is naturally understood to mean that the image of $f$ in $\m^2/\m^3$ is not a square up to a unit in $\ka$. For instance, $f=-x^2\in\R[x,y]_{(x,y)}$ is not a square in $(x,y)^2/(x,y)^3$, but $\R[x,y]_{(x,y)}/(x^2)$ does not define the intended notion of a node.
\end{remark}

\begin{lemma}\label{intrinsic_node_is_node}
Let $X$ be a scheme locally of finite type over a field $k$ and $x\in X$ a codimension-one point. If $x\in X$ is a node (Definition~\ref{definition_node}), then it is an intrinsic node (Definition~\ref{intrinsic_node}). The converse holds if $\ch k\ne2$ and $\ka(x)/k$ is separable.
\end{lemma}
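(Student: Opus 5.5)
The plan is to compare both notions after passing to the completed strict henselization $\widehat{\oo_{X,x}^{\sh}}$. The leading quadratic form of an equation is controlled by embedding dimensions, and these do not change under strict henselization and completion.

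\emph{Forward direction.} Let $x$ be a node and put $\oo=\oo_{X,x}$, $\ka=\ka(x)$. By Lemma~\ref{stacks_node_is_node}, $\oo$ is reduced of dimension $1$.

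\textbf{Step 1: embedding dimension.} The maximal ideal of $\oo$ extends to the maximal ideal of $\oo^{\sh}$ and of its completion. Hence $\dim_{\ka}\m/\m^2=\dim_{\ka^{\sep}}\h{\m}/\h{\m}^2=2$, the right-hand side being computed from $\ka^{\sep}[\![a,b]\!]/(ab)$.

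\textbf{Step 2: a regular ambient ring.} Write $\oo=A/I$ with $A$ a localization of a polynomial ring over $k$, so $A$ is regular. Choose $g_1,\dots,g_{n-2}\in I$ whose images span the kernel of $\m_A/\m_A^2\to\m/\m^2$. They are part of a regular system of parameters, so $R=A/(g_i)$ is regular local with $\dim R=\operatorname{embdim}R=2$, and $\oo=R/J$.

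\textbf{Step 3: $J$ is principal.} Since $\oo$ is reduced, $J$ is radical. Every minimal prime of $\oo$ has coheight $1$: otherwise $\oo$ would have an isolated closed component, which is impossible because the faithfully flat extension $\widehat{\oo^{\sh}}\cong\ka^{\sep}[\![a,b]\!]/(ab)$ is equidimensional of dimension $1$. So every minimal prime of $J$ has height $1$ in the UFD $R$, hence is principal, and $J=(f)$ with $f$ the product of their generators. Since $\operatorname{embdim}R/(f)=2=\operatorname{embdim}R$, we get $f\in\m_R^2$.

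\textbf{Step 4: the non-square condition.} Because $\ka/k$ is separable, $\widehat{R^{\sh}}\cong\ka^{\sep}[\![u,v]\!]$. The given isomorphism $\widehat{R^{\sh}}/(f)\cong\ka^{\sep}[\![a,b]\!]/(ab)$ is an isomorphism of quotients of $2$-dimensional regular complete rings of the same embedding dimension. It therefore lifts to an automorphism of $\ka^{\sep}[\![u,v]\!]$ carrying $(f)$ to $(ab)$. Consequently the image $q$ of $f$ in $\m^2/\m^3$, after base change to $\ka^{\sep}$, is a unit times a product of two independent linear forms. Such a form is not $c\,\ell^2$ over $\ka^{\sep}$, so a fortiori $q$ is not a square up to a unit over $\ka$.

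\emph{Converse.} Assume $\ch k\ne2$, $\ka/k$ separable, and $\oo\cong R/(f)$ as in Definition~\ref{intrinsic_node}. Then $\widehat{\oo^{\sh}}\cong\widehat{R^{\sh}}/(f)$ with $\widehat{R^{\sh}}\cong\ka^{\sep}[\![u,v]\!]$.

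\textbf{Step 5: $k$-linearity.} We need this identification to be $k$-linear. Since $\ka^{\sep}/k$ is separable, hence formally smooth, a coefficient field containing the image of $k$ exists. Choosing it is the main technical point of this direction.

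\textbf{Step 6: the quadratic form.} The leading form $q$ of $f$ is nonzero, since $0=0^2$ is excluded. It is not of the form $c\,\ell^2$, so in characteristic $\ne2$ it is a binary quadratic form of rank $2$. Rank is stable under base change, so over the separably closed field $\ka^{\sep}$ the form $q$ splits as a product of two independent linear forms. After a linear change of coordinates, $f=uv+(\text{higher order terms})$.

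\textbf{Step 7: normal form.} By Hensel's lemma / the formal Morse lemma applied to the coprime factorization of the leading form, there are new coordinates $a,b$ of $\ka^{\sep}[\![u,v]\!]$ with $f=\text{unit}\cdot ab$. This gives $\widehat{\oo^{\sh}}\cong\ka^{\sep}[\![a,b]\!]/(ab)$, so $x$ is a node.

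I expect the main obstacle to be the lifting in Step 4, namely that an isomorphism of the quotients lifts to an automorphism of the power series ring matching the principal ideals. Together with the coefficient-field bookkeeping in Step 5, this is where care is needed.
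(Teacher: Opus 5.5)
Your forward direction is correct, but Step~4 takes a heavier route than the paper. Lifting the isomorphism $\widehat{R^{\sh}}/(f)\cong\ka^{\sep}[\![a,b]\!]/(ab)$ to an automorphism of $\ka^{\sep}[\![u,v]\!]$ is true here. Because your $R$ from Step~2 is a $k$-algebra, you can lift the map on a coefficient field using formal smoothness over the prime field, lift the images of $u,v$, and apply Nakayama. But you leave this unproved, and it is not needed. The paper uses only that an isomorphism of complete local rings induces an isomorphism of associated graded rings, $\ka^{\sep}[u,v]/(uv)\cong\ka^{\sep}[a,b]/(\operatorname{in}(f))$. Comparing Hilbert functions gives $\deg\operatorname{in}(f)=2$, and reducedness of the left side shows $\operatorname{in}(f)$ is not a unit times a square. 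Two minor points: separability of $\ka/k$ plays no role in $\widehat{R^{\sh}}\cong\ka^{\sep}[\![u,v]\!]$, and the faithful-flatness detour in Step~3 is unnecessary, since in a one-dimensional local ring no minimal prime is maximal.

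The converse has a genuine gap at Step~5, the point you flag yourself. The $k$-structure lives on $\widehat{\oo_{X,x}^{\sh}}\cong\widehat{R^{\sh}}/(f)$, not on $\widehat{R^{\sh}}$. Moreover, the ring $R$ in Definition~\ref{intrinsic_node} is an abstract regular local ring and need not contain a field. For example, take $p$ odd and $R=(\mathbb{Z}[t,x,y]/(p-xy))_{(x,y)}$. This $R$ is regular of dimension $2$, $f=xy=p\in\m^2$ has leading form $\bar x\bar y$, and $R/(f)\cong\mathbb{F}_p(t)[x,y]_{(x,y)}/(xy)$ is the local ring of $\Spec\mathbb{F}_p[t,x,y]/(xy)$ at a codimension-one point. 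Yet $\widehat{R^{\sh}}$ has characteristic $0$, so it is not $\ka^{\sep}[\![u,v]\!]$ and has no coefficient field at all.

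Even when $R$ is equicharacteristic, you would still need to lift a $k$-compatible coefficient field through $\widehat{R^{\sh}}\to\widehat{R^{\sh}}/(f)$. If you instead take the coefficient field in $\widehat{\oo_{X,x}^{\sh}}$ itself, you get a presentation $\ka^{\sep}[\![u,v]\!]/(g)$ whose equation $g$ you have not related to $f$. Either way, Steps~6--7 cannot be applied to ``the leading form of $f$''.

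The paper's route avoids this by never putting a $k$-structure on $R$. It takes a $k$-linear Cohen presentation $\h\oo_{X,x}\cong\ka(x)[\![u,v]\!]/(g)$, which exists because $\ka(x)/k$ is separable. The kernel is principal because $\h\oo_{X,x}$ is reduced of dimension one; reducedness follows, for instance, from reducedness of the associated graded ring. It then compares $g$ with $f$ only through
$\ka(x)[u,v]/(\operatorname{in}(g))\cong\operatorname{gr}(\oo_{X,x})\cong\ka(x)[a,b]/(\operatorname{in}(f))$.
This uses $R$ only via $\operatorname{gr}(R/(f))=\operatorname{gr}(R)/(\operatorname{in}(f))$, so it works for any regular $R$. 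Your Steps~6--7 then go through verbatim for $g$ over $\ka(x)^{\sep}$. Finally, Lemma~\ref{completed_strict_Hensel} carries the resulting $k$-linear isomorphism to $\widehat{\oo_{X,x}^{\sh}}$.
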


\begin{proof}
Assume $x\in X$ is a node. Fix a separable algebraic closure $\ka(x)\hookrightarrow K$. Since $\widehat{\oo_{X,x}^{\sh}}\cong K[\![u,v]\!]/(uv)$, its maximal ideal is generated by two elements. Since strict henselization \cite[\href{https://stacks.math.columbia.edu/tag/07QM}{Tag 07QM}]{stacks-project} and completion preserve $\m/\m^2$, the maximal ideal of $\oo_{X,x}$ is generated by two elements. Because $\oo_{X,x}$ is a localization of a finitely generated $k$-algebra, it can be written as $R/I$, where $R$ is a regular local ring of dimension 2. As $\widehat{\oo_{X,x}^{\sh}}$ is reduced of dimension 1, so is $\oo_{X,x}$. Since $R$ is a two-dimensional UFD and $R/I$ is reduced of dimension 1, the height-one radical ideal $I$ is principal; put $I=(f)$. By the Cohen structure theorem $\h R\cong\ka(x)[\![a,b]\!]$, so $\widehat{R/(f)}\cong\h R/(f)\cong\ka(x)[\![a,b]\!]/(f)$. Thus passing $\oo_{X,x}\cong R/(f)$ to the completed strict henselizations yields $K[\![u,v]\!]/(uv)\cong K[\![a,b]\!]/(f)$. This induces an isomorphism of the associated graded rings 
\[
K[u,v]/(uv)\cong K[a,b]/(\operatorname{in}(f)),
\]
where $\operatorname{in}(f)$ denotes the lowest-degree homogeneous component of $f$. It follows that $\operatorname{in}(f)$ has degree 2, and is not a square up to a unit since $K[u,v]/(uv)$ is reduced. Because $\operatorname{in}(f)$ represents the image of $f$ in $\m^2/\m^3$, where $\m$ denotes the maximal ideal of $R$, the assertion follows.

\smallskip
Conversely, by Definition~\ref{intrinsic_node} we can write $\oo_{X,x}\cong R/(f)$ where $(R,\m,\ka)$ is a regular local ring of dimension 2, $f\in\m^2$, and the image of $f$ in $\m^2/\m^3$ is not a square up to a unit in $\ka$. A priori, this isomorphism may not be chosen as a $k$-algebra isomorphism. To construct one, note that these conditions imply that the maximal ideal $\m_x$ of $\oo_{X,x}$ can be generated by two elements (as $\m_x/\m_x^2\cong\m/\m^2$). Therefore, as $\ka(x)/k$ is separable, by (the proof of) \cite[\href{https://stacks.math.columbia.edu/tag/0C52}{Tag 0C52}]{stacks-project} we find a $k$-algebra isomorphism $\ka(x)[\![u,v]\!]/I\cong\h\oo_{X,x}$ for some ideal $I$. As $\oo_{X,x}$ is reduced, $\h\oo_{X,x}$ is reduced \cite[\href{https://stacks.math.columbia.edu/tag/07NZ}{Tag 07NZ}]{stacks-project}. Because $\ka(x)[\![u,v]\!]$ is a two-dimensional UFD and $\ka(x)[\![u,v]\!]/I$ is reduced of dimension 1, the height-one radical ideal $I$ is principal; put $I=(g)$. On the other hand, since $\h R\cong\ka(x)[\![a,b]\!]$, we have $\widehat{R/(f)}\cong\ka(x)[\![a,b]\!]/(f)$. Thus the local ring isomorphisms $\ka(x)[\![u,v]\!]/(g)\cong\h\oo_{X,x}\cong\ka(x)[\![a,b]\!]/(f)$ induce an isomorphism of the associated graded rings 
\begin{equation}\label{graded}
\ka(x)[u,v]/(\operatorname{in}(g))\cong \ka(x)[a,b]/(\operatorname{in}(f)).    
\end{equation}  
As $\operatorname{in}(f)$ has degree 2, $\operatorname{in}(g)$ has degree 2. Fix a separable algebraic closure $\ka(x)\hookrightarrow K$. Since $K$ is separably closed and $\ch K\ne2$, $\operatorname{in}(g)=L_1 L_2$ for some linear forms $L_1, L_2\in K[u,v]$. Suppose $L_1=\lambda L_2$ for some $\lambda\in K^{\times}$. Writing $L_2=\alpha u+\beta v$ with $\alpha\ne0$, we find that $\operatorname{in}(g)=(\lambda\alpha^2)(u+\frac{\beta}{\alpha}v)^2$ is a square up to a unit already in $\ka(x)[u,v]$, which by (\ref{graded}) implies that $\operatorname{in}(f)$ is a square up to a unit, contradicting the condition on $f$. Thus $L_1, L_2$ are linearly independent. A standard induction on homogeneous degree then yields a factorization $g=g_1g_2$ in $K[\![u,v]\!]$ such that $g_1\equiv L_1\mod{(u,v)^2}$ and $g_2\equiv L_2\mod{(u,v)^2}$. Since the linear parts of $g_1,g_2$ are linearly independent, the automorphism of $K[\![u,v]\!]$ determined by $u\mapsto g_1$ and $v\mapsto g_2$ induces an isomorphism $K[\![u,v]\!]/(uv)\cong K[\![u,v]\!]/(g)$. On the other hand, the $k$-algebra isomorphism $\ka(x)[\![u,v]\!]/(g)\cong\h\oo_{X,x}$ yields $K[\![u,v]\!]/(g)\cong\widehat{\oo_{X,x}^{\sh}}$ as $k$-algebras (Lemma~\ref{completed_strict_Hensel}). Hence the conclusion follows. 
\end{proof}

The following is a typical example of an intrinsic node (Definition~\ref{intrinsic_node}) that is not a node (Definition~\ref{definition_node}); see for example \cite[Ex.~3.9]{Pos24}.

\begin{example}[The Whitney umbrella]\label{example_Whitney_umbrella}
Let $k$ be a field of characteristic 2. Let $X=\Spec k[u,v,t]/(u^2-tv^2)$ and $x=(u,v)\in X$. Then $\oo_{X,x}\cong k(t)[u,v]_{(u,v)}/(u^2-tv^2)$. Since $u^2-tv^2$ is not a square modulo $(u,v)^3$, $x\in X$ is an intrinsic node. However, since $\ch k=2$, $\sqrt{t}\not\in k(t)^{\sep}$ and $u^2-tv^2$ is irreducible over $k(t)^{\sep}$. Thus $\widehat{\oo_{X,x}^{\sh}}$ has only one minimal prime, so $x\in X$ is not a node.
\end{example}

We note that this distinction persists even for curves. Indeed, being an intrinsic node is an intrinsic property of the local ring, whereas being a node is a property relative to the base field.

\begin{example}
Let $k'/k$ be a finite inseparable field extension. Consider $X=\Spec k'[u,v]/(uv)$ and $x=(u,v)$. Then $x\in X$ is an intrinsic node. On the other hand, for $x\in X$ to be a node, $\ka(x)\cong k'$ must be separable over $k$. Thus, $x\in X$ is a node when $X$ is viewed as an algebraic $k'$-scheme, but it is not a node when $X$ is viewed as an algebraic $k$-scheme.
\end{example}

\section{Geometric characterization of nodes}

The main result of this section is Theorem~\ref{Geometric_characterization_of_nodes}.

In higher dimensions, a natural attempt to define a node of codimension one is to mimic the classical definition for curves by requiring the singularity to split after a base field extension $L/k$ (cf. Definition~\ref{Stacks_node}), so that the completed local ring of the base change takes the standard split form $\widehat{\oo_{X_L,x_L}}\cong\ka(x_L)[\![a,b]\!]/(ab)$. However, this naive approach fails because the obstruction to splitting the node can live in the residue field of the point, rather than the base field.

\begin{example}[Base change fails to split nodes in higher dimensions]\label{Base_change_fails_to_split_nodes} 
Assume $\ch k\ne2$ and let $X=\Spec k[u,v,t]/(u^2-tv^2)$ be the scheme from Example~\ref{example_Whitney_umbrella}. The point $x=(u,v)\in X$ is a node (Definition~\ref{definition_node}). For any field extension $L/k$, let $x_L=(u,v)\in X_L=X\times_kL$ denote the point lying over $x$. Then $\widehat{\oo_{X_L,x_L}}\cong L(t)[\![u,v]\!]/(u^2-tv^2)$. This ring is not isomorphic to $L(t)[\![u,v]\!]/(uv)$ as $u^2-tv^2$ remains irreducible in $L(t)[\![u,v]\!]$ for all field extensions $L/k$.
\end{example}

To bypass this obstruction, one needs to allow residue field extensions. Following \cite[p.246, following Def.~9.2/6]{BLR90}, a node on a curve over a field $k$ can be formulated by the existence of an \'etale neighborhood which is \'etale over the union of coordinate axes of $\mathbb{A}^2_k$. This geometric formulation extends naturally to higher dimensions and makes preservation under hypersurface sections more tractable. While the existence of an \'etale roof readily implies isomorphic (completed) strict henselizations of the local rings, the converse is less obvious; Theorem~\ref{Geometric_characterization_of_nodes} bridges this gap. Since we are not aware of an explicit reference applying Artin approximation to completed strict henselizations in this setting, we include a proof for completeness.

To reduce Theorem~\ref{Geometric_characterization_of_nodes} to Artin approximation, we need all constructed isomorphisms to respect the $k$-algebra structure, which is ensured by the separability of the residue field extension $\ka(x)/k$. 

We begin with a few lemmas.

\begin{lemma}\label{completed_strict_Hensel}
Let $X$ be a scheme locally of finite type over a field $k$, and let $x\in X$ be a point. Fix a separable algebraic closure $\ka(x)\hookrightarrow\ka(x)^{\sep}$. Then a $k$-algebra isomorphism $\widehat{\oo_{X,x}}\cong\ka(x)[\![a,b]\!]/(g)$ induces a $k$-algebra isomorphism $\widehat{\oo_{X,x}^{\sh}}\cong\ka(x)^{\sep}[\![a,b]\!]/(g)$.
\end{lemma}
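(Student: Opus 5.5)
The plan is to identify $\widehat{\oo_{X,x}^{\sh}}$ with the completion of $\widehat{\oo_{X,x}}^{\sh}$, and then compute the latter from the given presentation. Write $A=\oo_{X,x}$, with maximal ideal $\m$, residue field $\ka=\ka(x)$, and fix $\ka\hookrightarrow\ka^{\sep}$. The first step is the standard fact that strict henselization commutes with completion in the following sense. The map $A\to\h A$ is flat local with $\m\h A$ maximal and the same residue field. The map $A\to A^{\sh}$ is flat local with $\m A^{\sh}$ maximal. Hence the three rings $\widehat{A^{\sh}}$, $\widehat{(\h A)^{\sh}}$ and the $\m$-adic completion of $\h A\otimes_A A^{\sh}$ agree. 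More precisely, both $\widehat{A^{\sh}}$ and $\widehat{(\h A)^{\sh}}$ are complete local Noetherian rings, formally smooth (indeed ind-étale before completion) over $\h A$ resp. $A$, with residue field $\ka^{\sep}$. I would invoke \cite[\href{https://stacks.math.columbia.edu/tag/0BSL}{Tag 0BSL}]{stacks-project}-type results, or argue directly that $A^{\sh}/\m^nA^{\sh}\cong(A/\m^n)^{\sh}\cong(\h A/\m^n\h A)^{\sh}$. This holds because strict henselization of an Artinian local ring depends only on that ring, and base change of the strict henselization along $A\to A/\m^n$ is the strict henselization. Passing to the limit gives a $k$-algebra isomorphism $\widehat{A^{\sh}}\cong\widehat{(\h A)^{\sh}}$, compatible with the chosen embedding into $\ka^{\sep}$.

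The second step uses the given isomorphism $\h A\cong\ka[\![a,b]\!]/(g)$ to reduce to computing $\widehat{(\ka[\![a,b]\!]/(g))^{\sh}}$. Since strict henselization commutes with quotients, $(R/I)^{\sh}=R^{\sh}/IR^{\sh}$, and completion is exact on finite modules over Noetherian rings. So it suffices to show $\widehat{(\ka[\![a,b]\!])^{\sh}}\cong\ka^{\sep}[\![a,b]\!]$ as $\ka[\![a,b]\!]$-algebras. To prove this, note that $\ka^{\sep}[\![a,b]\!]$ is the $(a,b)$-adic completion of $\varinjlim_L L[\![a,b]\!]$, where $L$ ranges over finite separable subextensions of $\ka^{\sep}/\ka$. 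Each $L[\![a,b]\!]=L\otimes_\ka\ka[\![a,b]\!]$ is finite étale over $\ka[\![a,b]\!]$ and local. So the colimit is a strict henselization: it is a filtered colimit of local étale algebras, henselian since each $L[\![a,b]\!]$ is complete, and has separably closed residue field $\ka^{\sep}$. By uniqueness of strict henselization once the residue embedding is fixed, this colimit is $(\ka[\![a,b]\!])^{\sh}$, and completing gives $\ka^{\sep}[\![a,b]\!]$. Quotienting by $g$ yields $\widehat{A^{\sh}}\cong\ka^{\sep}[\![a,b]\!]/(g)$.

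The main obstacle is keeping track of the $k$-algebra structures. The $k$-structure on $\ka^{\sep}[\![a,b]\!]/(g)$ comes from composing $k\to\ka[\![a,b]\!]/(g)$, which is defined via the given isomorphism, with the inclusion. Every map above is a homomorphism of $A$-algebras, hence of $k$-algebras, and is built from the given $k$-isomorphism $\h A\cong\ka[\![a,b]\!]/(g)$. So I would track that each identification is an $\h A$-algebra map, not merely a ring isomorphism. The delicate point is the uniqueness of strict henselization up to unique isomorphism over the base once the separable closure of the residue field is fixed. Here the embedding $\ka\hookrightarrow\ka^{\sep}$ must be transported through the given isomorphism's effect on residue fields. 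This effect is a $k$-automorphism-compatible identification of $\ka$ with itself, so one only has to pick the matching embedding. This is harmless, since the statement allows any separable closure.
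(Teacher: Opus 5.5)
Your proposal is correct and follows essentially the paper's route. You identify $\widehat{A^{\sh}}$ with the completion of $(\h A)^{\sh}$; the paper cites Stacks Tag 06LJ(f) for this, while you verify it via $A^{\sh}/\m^nA^{\sh}\cong(A/\m^n)^{\sh}$. You then compute $(\h A)^{\sh}$ as the base change of the complete ring along $\ka(x)\hookrightarrow\ka(x)^{\sep}$ using the coefficient field, and complete. The paper does this base change directly for $\h A$ via Tag 0C2Z, whereas you do it for $\ka(x)[\![a,b]\!]$ and then use $(R/I)^{\sh}=R^{\sh}/IR^{\sh}$.

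You also note that the given isomorphism may induce a nontrivial $k$-automorphism of $\ka(x)$. The paper passes over this silently, and you handle it correctly: strict henselizations taken with respect to different embeddings into $\ka(x)^{\sep}$ are isomorphic over the base ring.
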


\begin{proof}
The $k$-algebra isomorphism $\widehat{\oo_{X,x}}\cong\ka(x)[\![a,b]\!]/(g)$ induces a $k$-algebra section $\iota_x\colon\ka(x)\hookrightarrow\widehat{\oo_{X,x}}$. View $\ka(x)^{\sep}$ as a $k$-algebra via $k\hookrightarrow\ka(x)\hookrightarrow\ka(x)^{\sep}$. Applying \cite[\href{https://stacks.math.columbia.edu/tag/0C2Z}{Tag 0C2Z}]{stacks-project} to $\iota_x\colon\ka(x)\hookrightarrow\widehat{\oo_{X,x}}$ with respect to their maximal ideals yields $(\widehat{\oo_{X,x}})^{\sh}\cong\widehat{\oo_{X,x}}\otimes_{\ka(x)}\ka(x)^{\sep}$ (here, $\widehat{\oo_{X,x}}$ is henselian as it is complete local). Complete both sides with respect to their maximal ideals. The completion of the left-hand side is $\widehat{\oo_{X,x}^{\sh}}$ by item~(f) of \cite[\href{https://stacks.math.columbia.edu/tag/06LJ}{Tag 06LJ}]{stacks-project}. The right-hand side is isomorphic to $\ka(x)[\![a,b]\!]/(g)\otimes_{\ka(x)}\ka(x)^{\sep}$, whose completion is $\ka(x)^{\sep}[\![a,b]\!]/(g)$. The resulting isomorphism respects the $k$-algebra structure by construction.
\end{proof}

\begin{lemma}\label{O_u_tensor_product} 
Let $X$ be a scheme locally of finite type over a field $k$. Suppose $x\in X$ admits a $k$-algebra section $\iota_x\colon\ka(x)\hookrightarrow\widehat{\oo_{X,x}}$ (for instance, if $\ka(x)/k$ is separable). Then for any pointed \'etale $k$-morphism $(U,u)\to(X,x)$, $\widehat{\oo_{U,u}}$ (endowed with the $\ka(x)$-algebra structure induced by $\iota_x$) admits a $\ka(x)$-algebra section $\ka(u)\hookrightarrow\widehat{\oo_{U,u}}$ such that the natural map 
\[
\widehat{\oo_{X,x}}\otimes_{\ka(x)}\ka(u)\to\widehat{\oo_{U,u}}
\]
is an isomorphism of $k$-algebras.
\end{lemma}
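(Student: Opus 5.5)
The plan is to build the section first and then compare the two complete local rings by a finiteness-plus-residue-field argument. Since $U\to X$ is étale at $u$, the extension $\ka(u)/\ka(x)$ is finite separable and $\widehat{\oo_{U,u}}$ is a local $\widehat{\oo_{X,x}}$-algebra. Via $\iota_x$, it is in particular a complete local $\ka(x)$-algebra whose residue field is $\ka(u)$. Write $\ka(u)=\ka(x)[T]/(P(T))$ with $P$ monic and separable. The image $\bar P$ of $P$ in $\ka(u)[T]$ has a simple root $\bar t$. By Hensel's lemma in the complete (hence henselian) local ring $\widehat{\oo_{U,u}}$, $\bar t$ lifts to a unique root $t\in\widehat{\oo_{U,u}}$ of $P$. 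Sending $T\mapsto t$ then defines a $\ka(x)$-algebra section $\ka(u)\hookrightarrow\widehat{\oo_{U,u}}$ lifting the identity on residue fields. This gives the natural map
\[
\phi\colon \widehat{\oo_{X,x}}\otimes_{\ka(x)}\ka(u)\to\widehat{\oo_{U,u}},
\]
which is a homomorphism of $\widehat{\oo_{X,x}}$-algebras, and hence of $k$-algebras, because $\iota_x$ is a $k$-algebra map.

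To show $\phi$ is an isomorphism, set $A=\widehat{\oo_{X,x}}$ and $B=A\otimes_{\ka(x)}\ka(u)$, and proceed in four steps.
\begin{enumerate}
\item $B$ is finite free over $A$, and $B/\m_AB\cong\ka(x)\otimes_{\ka(x)}\ka(u)=\ka(u)$ is a field. So $B$ is local with maximal ideal $\m_AB$, finite over the complete ring $A$, and therefore complete.
\item Because $U\to X$ is étale at $u$, we have $\m_x\oo_{U,u}=\m_u$ and $\oo_{X,x}\to\oo_{U,u}$ is flat. Passing to completions, $\widehat{\oo_{U,u}}$ is flat over $A$ with $\m_A\widehat{\oo_{U,u}}=\widehat{\m_u}$.
\item $\phi$ is a local homomorphism, and modulo $\m_A$ it induces the identity $\ka(u)\to\ka(u)$.
\item Since $\widehat{\oo_{U,u}}$ is $\m_A$-adically complete and the map is surjective mod $\m_A$, complete Nakayama gives that $\phi$ is surjective; in particular $\widehat{\oo_{U,u}}$ is finite over $A$. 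Now both sides are finite flat over $A$, hence free, and their ranks agree (both equal $[\ka(u):\ka(x)]$ after reducing mod $\m_A$). A surjection between free modules of the same finite rank is an isomorphism.
\end{enumerate}

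The point needing the most care is Step 2: one must check that completion preserves the étale-local structure. I will use the standard fact that $\widehat{\oo_{U,u}}$ is the $\m_x$-adic completion of $\oo_{U,u}$, since $\m_x\oo_{U,u}=\m_u$. Flatness then follows from flatness of $\oo_{X,x}\to\oo_{U,u}$ together with the fact that $\m$-adic completion of a Noetherian local ring is flat. An alternative route for Step 2 is to write $\widehat{\oo_{U,u}}$ as $\m_A$-adically complete and compare graded pieces $\m_A^n/\m_A^{n+1}\otimes_{\ka(x)}\ka(u)\cong\m_u^n/\m_u^{n+1}$, which uses flatness directly.

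Finally, for the parenthetical claim: when $\ka(x)/k$ is separable, the existence of $\iota_x$ is the usual Cohen-structure fact that a separable residue field extension (formally smooth over $k$) lifts into a complete local $k$-algebra. I will cite the corresponding Stacks Project result rather than reprove it.
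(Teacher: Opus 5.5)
Your proposal is correct. It is more self-contained than the paper's proof, which consists of two citations: \cite[Thm.~II.8.25A]{Har77} (or the Stacks Project) for a $\ka(x)$-algebra section $\ka(u)\hookrightarrow\widehat{\oo_{U,u}}$, and \cite[Ex.~III.10.4]{Har77} for the isomorphism $\widehat{\oo_{X,x}}\otimes_{\ka(x)}\ka(u)\cong\widehat{\oo_{U,u}}$. The $k$-linearity is then noted to hold by construction.

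You construct the section directly, by Hensel-lifting a simple root of the minimal polynomial of a primitive element. In effect you then prove the cited exercise:
\begin{itemize}
\item flatness and $\m_x\oo_{U,u}=\m_u$ pass to completions;
\item complete Nakayama turns the identity on residue fields into surjectivity of $\phi$;
\item a rank count of finite free $\widehat{\oo_{X,x}}$-modules gives injectivity.
\end{itemize}
The paper's route buys brevity. Yours shows that only the properties ``flat, unramified, with finite separable residue field extension'' of $U\to X$ at $u$ are used.

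One point in Step~2 needs tightening. Flatness of $\oo_{X,x}\to\oo_{U,u}$ together with flatness of completion only shows that $\widehat{\oo_{U,u}}$ is flat over $\oo_{X,x}$. To get flatness over $\widehat{\oo_{X,x}}$, apply the local flatness criterion to the local homomorphism $\widehat{\oo_{X,x}}\to\widehat{\oo_{U,u}}$, using flat base change along $\oo_{X,x}\to\widehat{\oo_{X,x}}$:
\[
\operatorname{Tor}_1^{\widehat{\oo_{X,x}}}(\ka(x),\widehat{\oo_{U,u}})\cong\operatorname{Tor}_1^{\oo_{X,x}}(\ka(x),\widehat{\oo_{U,u}})=0 .
\]
Alternatively, use your graded-pieces variant, which needs only flatness over $\oo_{X,x}$.
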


\begin{proof}
This is \cite[Ex.~III.10.4]{Har77}; we restate it here to clarify our use. Since $\ka(u)/\ka(x)$ is separable, \cite[\href{https://stacks.math.columbia.edu/tag/0C34}{Tag 0C34}]{stacks-project} (or \cite[Thm.~II.8.25A]{Har77}) yields a $\ka(x)$-algebra section $\ka(u)\hookrightarrow\widehat{\oo_{U,u}}$. By construction all the maps respect the $k$-algebra structures, hence so does the natural map. 
\end{proof}

\begin{lemma}\label{etale_nbd}
Let $X$ be a scheme locally of finite type over a field $k$, and let $x\in X$ be a codimension-one point. The following are equivalent: 

\begin{enumerate}
    \item[$(C)$] $x\in X$ is a node (Definition~\ref{definition_node}); i.e. $\ka(x)/k$ is separable and $\widehat{\oo_{X,x}^{\sh}}\cong\ka(x)^{\sep}[\![\alpha,\beta]\!]/(\alpha\beta)$ as $k$-algebras.
    \item[$(D)$] $\ka(x)/k$ is separable and there exists a pointed \'etale $k$-morphism $(U,u)\to (X,x)$ such that $\widehat{\oo_{U,u}}\cong\ka(u)[\![\alpha,\beta]\!]/(\alpha\beta)$ as $k$-algebras.
\end{enumerate}
\end{lemma}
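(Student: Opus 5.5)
The plan is to prove the two implications separately. The direction $(D)\Rightarrow(C)$ is formal. For $(C)\Rightarrow(D)$, the idea is to pass to a finite separable extension of $\ka(x)$ over which the tangent cone of the node splits, and then realize that extension as the residue field of an \'etale neighbourhood.

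\emph{$(D)\Rightarrow(C)$.} Let $(U,u)\to(X,x)$ be pointed \'etale with $\widehat{\oo_{U,u}}\cong\ka(u)[\![\alpha,\beta]\!]/(\alpha\beta)$ as $k$-algebras.
\begin{enumerate}
\item The extension $\ka(u)/\ka(x)$ is finite separable. So a separable closure of $\ka(u)$ is also a separable closure of $\ka(x)$, and we may fix $\ka(u)^{\sep}=\ka(x)^{\sep}$.
\item With this choice, the \'etale map induces a $k$-algebra isomorphism $\oo_{X,x}^{\sh}\cong\oo_{U,u}^{\sh}$, since strict henselization is insensitive to \'etale neighbourhoods.
\item Lemma~\ref{completed_strict_Hensel}, applied to $(U,u)$ with $g=\alpha\beta$, gives $\widehat{\oo_{U,u}^{\sh}}\cong\ka(x)^{\sep}[\![\alpha,\beta]\!]/(\alpha\beta)$ as $k$-algebras.
\item Hence $\widehat{\oo_{X,x}^{\sh}}$ has the required form. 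Separability of $\ka(x)/k$ is part of the hypothesis, so $x$ is a node.
\end{enumerate}

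\emph{$(C)\Rightarrow(D)$, first steps.} Assume $x$ is a node.
\begin{enumerate}
\item As in the proof of Lemma~\ref{intrinsic_node_is_node}, the maximal ideal of $\oo_{X,x}$ is generated by two elements, since $\m/\m^2$ is unchanged by strict henselization and completion.
\item Because $\ka(x)/k$ is separable, the argument of \cite[\href{https://stacks.math.columbia.edu/tag/0C52}{Tag 0C52}]{stacks-project} gives a $k$-algebra isomorphism $\widehat{\oo_{X,x}}\cong\ka(x)[\![a,b]\!]/(g)$.
\item Here $g$ is a single element. Indeed, $\widehat{\oo_{X,x}}$ is reduced of dimension one (as $\oo_{X,x}$ is, cf. Lemma~\ref{stacks_node_is_node}), and a height-one radical ideal in the UFD $\ka(x)[\![a,b]\!]$ is principal.
\item Lemma~\ref{completed_strict_Hensel} then gives $\ka(x)^{\sep}[\![a,b]\!]/(g)\cong\ka(x)^{\sep}[\![\alpha,\beta]\!]/(\alpha\beta)$. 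Passing to associated graded rings shows that $\operatorname{in}(g)$ has degree $2$ and that $\ka(x)^{\sep}[a,b]/(\operatorname{in}(g))\cong\ka(x)^{\sep}[\alpha,\beta]/(\alpha\beta)$.
\item It follows that $\operatorname{in}(g)=L_1L_2$ with $L_1,L_2$ linearly independent linear forms over $\ka(x)^{\sep}$. Their finitely many coefficients lie in some finite separable extension $L/\ka(x)$.
\end{enumerate}

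\emph{$(C)\Rightarrow(D)$, \'etale neighbourhood and splitting.}
\begin{enumerate}
\item Choose a pointed \'etale $k$-morphism $(U,u)\to(X,x)$ with $\ka(u)\cong L$ over $\ka(x)$. Such a neighbourhood exists because every finite separable extension of the residue field is realized by an \'etale neighbourhood (e.g. take a standard \'etale algebra defined by a monic lift of the minimal polynomial of a primitive element).
\item By Lemma~\ref{O_u_tensor_product}, $\widehat{\oo_{U,u}}\cong\widehat{\oo_{X,x}}\otimes_{\ka(x)}L\cong L[\![a,b]\!]/(g)$ as $k$-algebras.
\item Over $L$ the initial form factors as $\operatorname{in}(g)=L_1L_2$ with independent factors. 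The homogeneous-degree induction from the proof of Lemma~\ref{intrinsic_node_is_node} (a Hensel-type lifting) factors $g=g_1g_2$ in $L[\![a,b]\!]$ with $g_i\equiv L_i \bmod (a,b)^2$.
\item Because $L_1,L_2$ are independent, the substitution $\alpha\mapsto g_1$, $\beta\mapsto g_2$ is an $L$-automorphism of the power series ring. It yields $\widehat{\oo_{U,u}}\cong L[\![\alpha,\beta]\!]/(\alpha\beta)$, and this isomorphism is $k$-linear.
\end{enumerate}

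The main obstacle I expect is bookkeeping of the $k$-algebra structures at every stage. This requires taking the coefficient field of $\widehat{\oo_{U,u}}$ via Lemma~\ref{O_u_tensor_product} compatibly with the section $\ka(x)\hookrightarrow\widehat{\oo_{X,x}}$, and checking that the factorization takes place over $L$ itself rather than only over $\ka(x)^{\sep}$. The latter holds because each step of the lifting induction solves linear equations with coefficients in the field over which $L_1,L_2$ are defined.
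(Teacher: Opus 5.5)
Your proof is correct. The direction $(D)\Rightarrow(C)$ is exactly the paper's argument: \'etale invariance of completed strict henselizations plus Lemma~\ref{completed_strict_Hensel}. For $(C)\Rightarrow(D)$ you also follow the paper's skeleton: split over a finite separable $L/\ka(x)$, realize $L$ as $\ka(u)$ for an \'etale neighbourhood, and transport the presentation via Lemma~\ref{O_u_tensor_product}.

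The genuine difference is how you obtain the presentation of $\widehat{\oo_{X,x}}$.
\begin{itemize}
\item The paper uses Lemma~\ref{stacks_node_is_node} ($\oo_{X,x}$ reduced, $\delta$-invariant $1$, two geometric branches) together with the Stacks Project classification (Tags 0C4A and 0C49(3)). This gives $\widehat{\oo_{X,x}}\cong\ka(x)[\![t,s]\!]/(q)$ with $q$ an \emph{exactly homogeneous} nondegenerate quadratic form. Once the coefficients of its linear factors are adjoined, a linear change of variables finishes the proof, with no lifting.
\item You take the embedding-dimension presentation $\ka(x)[\![a,b]\!]/(g)$ from Tag 0C52, where $g$ is an arbitrary power series. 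You read off the splitting of $\operatorname{in}(g)$ from the associated graded ring of the isomorphism in $(C)$. You then need the degree-by-degree Hensel lifting from the proof of Lemma~\ref{intrinsic_node_is_node} to factor $g$ itself over $L$.
\end{itemize}

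Your route avoids the $\delta$-invariant and branch machinery and reuses tools already in the paper. It also makes explicit that the split tangent cone is what forces the splitting. The price is the extra lifting step and the check that it runs over $L$, which you correctly note.

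One small point: the graded isomorphism you get is a priori only semilinear over $\ka(x)^{\sep}$, since the $k$-isomorphism in $(C)$ need not induce the identity on residue fields. This does no harm, because the properties you extract are ring-theoretic: $\operatorname{in}(g)$ has degree $2$, and the quotient is reduced with two minimal primes.
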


\begin{proof}
$(D)\Rightarrow(C)$: Since $U\to X$ is \'etale, by \cite[\href{https://stacks.math.columbia.edu/tag/094Z}{Tag 094Z}]{stacks-project} we have $\widehat{\oo_{X,x}^{\sh}}\cong\widehat{\oo_{U,u}^{\sh}}$ as $k$-algebras. Applying Lemma~\ref{completed_strict_Hensel} to $u\in U$ gives $\widehat{\oo_{U,u}^{\sh}}\cong\ka(x)^{\sep}[\![\alpha,\beta]\!]/(\alpha\beta)$ as $k$-algebras, so $(C)$ follows. 

\smallskip
$(C)\Rightarrow(D)$: The ring $\oo_{X,x}$ is Nagata (\cite[\href{https://stacks.math.columbia.edu/tag/035B}{Tag 035B}]{stacks-project}) with $\ka(x)/k$ separable and, by Lemma~\ref{stacks_node_is_node}, is reduced of dimension 1, has $\delta$-invariant 1, and has 2 geometric branches. Thus $\oo_{X,x}$ satisfies the equivalent conditions of \cite[\href{https://stacks.math.columbia.edu/tag/0C4A}{Tag 0C4A}]{stacks-project}. Condition~(3) of \cite[\href{https://stacks.math.columbia.edu/tag/0C49}{Tag 0C49}]{stacks-project} yields a $k$-algebra isomorphism 
\begin{equation}\label{xxabc}
  \widehat{\oo_{X,x}}\cong\ka(x)[\![t,s]\!]/(at^2+bts+cs^2)    
\end{equation}
where $at^2+bts+cs^2$ is a nondegenerate quadratic form over $\ka(x)$. Thus it splits in $\ka(x)^{\sep}[t,s]$ with distinct linear factors: $at^2+bts+cs^2=(a_1t-b_1s)(a_2t-b_2s)\in\ka(x)^{\sep}[t,s]$ with $a_2b_1-a_1b_2\ne0$.

Let $L=\ka(x)(a_1,a_2,b_1,b_2)\subset\ka(x)^{\sep}$. So $L/\ka(x)$ is finite separable. By \cite[\href{https://stacks.math.columbia.edu/tag/02LF}{Tag 02LF}]{stacks-project} there exists an \'etale morphism $(U,u)\to(X,x)$ such that $\ka(u)\cong L$ as $\ka(x)$-algebras; we view $U$ as a $k$-scheme via $U\to X\to\Spec k$. The $k$-algebra isomorphism (\ref{xxabc}) induces a $k$-algebra section $\iota_x\colon\ka(x)\hookrightarrow\widehat{\oo_{X,x}}$. Lemma~\ref{O_u_tensor_product} applied to $\iota_x$, together with tensoring (\ref{xxabc}) with $\ka(u)$, gives $k$-algebra isomorphisms 
\[
\widehat{\oo_{U,u}}\cong\widehat{\oo_{X,x}}\otimes_{\ka(x)}\ka(u)\cong\ka(u)[\![t,s]\!]/(at^2+bts+cs^2).
\]
By construction $at^2+bts+cs^2=(a_1t-b_1s)(a_2t-b_2s)\in\ka(u)[t,s]$ with nonzero determinant $a_2b_1-a_1b_2\ne0$, so the change of coordinates $\alpha=a_1t-b_1s$ and $\beta=a_2t-b_2s$ of $\ka(u)[\![t,s]\!]$ induces a $k$-algebra isomorphism $\widehat{\oo_{U,u}}\cong\ka(u)[\![\alpha,\beta]\!]/(\alpha\beta).$
\end{proof}

\begin{lemma}\label{model_for_Artin_approx}
Let $U$ be a scheme locally of finite type over a field $k$ and $u\in U$ a codimension-one point with residue field $\ka(u)$. Assume that $\ka(u)/k$ is separable, and that there is a $k$-algebra isomorphism 
\begin{equation}\label{O_u}
\widehat{\oo_{U,u}}\cong\ka(u)[\![a,b]\!]/(ab).    
\end{equation}
Then there exists a scheme $Z=\Spec A[a,b]/(ab)$ of finite type over $k$ with a point $z=(a,b)\in Z$ such that there is a $k$-algebra isomorphism 
\[
\widehat{\oo_{U,u}}\cong\widehat{\oo_{Z,z}}.
\]
Moreover, $\Spec A$ is \'etale over some affine space $\A^n_k$, where $n=\operatorname{tr.deg}_k\ka(u)$.
\end{lemma}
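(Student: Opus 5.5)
The idea is to find a finite type $k$-algebra $A$ that realizes $\ka(u)$ as the residue field at a generic-type point, and whose function field is $\ka(u)$. Set $L=\ka(u)$. It is finitely generated over $k$, since $U$ is locally of finite type, and separable over $k$ by hypothesis.

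First, choose a separating transcendence basis $t_1,\dots,t_n$ of $L/k$, with $n=\operatorname{tr.deg}_kL$. Then $L/k(t_1,\dots,t_n)$ is finite separable, and by the primitive element theorem $L=k(t)[\theta]$ with $\theta$ having separable minimal polynomial $P(T)\in k(t)[T]$. Clearing denominators, I may take $P\in k[t][T]$ monic after scaling $\theta$. Let $\Delta\in k[t]$ be the discriminant of $P$, which is nonzero by separability. Then
\[
A:=k[t_1,\dots,t_n][\Delta^{-1}][T]/(P)
\]
is standard étale over $D(\Delta)\subset\A^n_k$, hence étale over $\A^n_k$. Since $P$ is irreducible over $k(t)$, $A$ is a domain with fraction field $A\otimes_{k[t]}k(t)=L$.

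Second, set $Z=\Spec A[a,b]/(ab)$ and $z=(a,b)$, i.e. the prime ideal generated by $a,b$. Note $A[a,b]/(ab)\to A$ via $a,b\mapsto0$ has kernel $(a,b)$, and $A$ is a domain, so $z$ is prime. Localizing at $z$ inverts all nonzero elements of $A$. Hence $\oo_{Z,z}\cong L[a,b]_{(a,b)}/(ab)$, which has residue field $L$ and dimension one, so $z$ is a codimension-one point. Its completion is $L[\![a,b]\!]/(ab)$, and this identification is a $k$-algebra isomorphism, because the $k$-structure comes from $k\subset A\subset L$.

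Third, compose with the given isomorphism (\ref{O_u}) to get $\widehat{\oo_{U,u}}\cong\ka(u)[\![a,b]\!]/(ab)\cong\widehat{\oo_{Z,z}}$ as $k$-algebras. This uses the fixed identification $\ka(u)=L$, which is compatible with the $k$-structure: the right side of (\ref{O_u}) carries the $k$-structure induced by $k\hookrightarrow\ka(u)$.

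The only delicate point is the bookkeeping of $k$-structures. The $k$-algebra structure on $L[\![a,b]\!]/(ab)$ must be the one induced by $k\hookrightarrow L$, and the coefficient field $L\subset\widehat{\oo_{Z,z}}$ must be the image of $A$ (extended to $L$), so that both structures agree. This is automatic in the construction above. The rest is standard: separability is used exactly to make $\Spec A\to\A^n_k$ étale.
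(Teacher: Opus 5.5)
Your proof is correct and follows the paper's route: realize $\ka(u)$ as the fraction field of a finite type $k$-algebra $A$ that is étale over $\A^n_k$, then take $Z=\Spec A[a,b]/(ab)$ and compare completions. The only difference is that you build $A$ explicitly as a standard étale algebra $k[t][\Delta^{-1}][T]/(P)$ via a separating transcendence basis and a primitive element. The paper instead takes any finitely generated $A\subset\ka(u)$ with fraction field $\ka(u)$, and gets étaleness over affine space after shrinking by citing smoothness at the generic point and the local structure of smooth morphisms.
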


\begin{proof}
Let $K=\ka(u)$. Since $K/k$ is a finitely generated field extension, we may choose a finitely generated $k$-subalgebra $A\subset K$ whose fraction field is $K$. The generic point $\eta$ of $\Spec A$ has residue field $K$, which is separable over $k$ by assumption, thus $A$ is smooth at $\eta$ over $k$ \cite[\href{https://stacks.math.columbia.edu/tag/00TV}{Tag 00TV}]{stacks-project}. Possibly after shrinking, we may assume $\Spec A$ is \'etale over some affine space $\A^n_k$ \cite[\href{https://stacks.math.columbia.edu/tag/054L}{Tag 054L}]{stacks-project}. Now define $Z=\Spec A[a,b]/(ab)$, and let $z=(a,b)\in Z$. Then $\oo_{Z,z}\cong K[a,b]_{(a,b)}/(ab)$ as $k$-algebras; passing to the completion yields a $k$-algebra isomorphism \begin{equation}\label{O_z}
\widehat{\oo_{Z,z}}\cong K[\![a,b]\!]/(ab).
\end{equation} 
The assertion follows by combining (\ref{O_u}) and (\ref{O_z}). 
\end{proof}

The following theorem establishes an equivalence between the algebraic definition of a node of codimension one and its geometric formulation via \'etale morphisms.

\begin{theorem}\label{Geometric_characterization_of_nodes} 
Let $X$ be a scheme locally of finite type over a field $k$, and let $x\in X$ be a codimension-one point. The following are equivalent.

\begin{enumerate}
    \item[$(A)$] $x\in X$ is a node (Definition~\ref{definition_node}); i.e. $\ka(x)/k$ is separable and $\widehat{\oo_{X,x}^{\sh}}\cong\ka(x)^{\sep}[\![a,b]\!]/(ab)$ as $k$-algebras. 
    \item[$(B)$] There exist pointed \'etale $k$-morphisms 
    \[
    (X,x)\leftarrow(V,v)\rightarrow(Y,y)
    \] 
    where $Y=\Spec k[a,b,t_1,\dots,t_n]/(ab)$ and $y=(a,b)\in Y$.
\end{enumerate}
\end{theorem}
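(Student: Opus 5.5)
The direction $(B)\Rightarrow(A)$ is the easy one. Given the roof, \cite[\href{https://stacks.math.columbia.edu/tag/094Z}{Tag 094Z}]{stacks-project} gives $k$-algebra isomorphisms
\[
\widehat{\oo_{X,x}^{\sh}}\cong\widehat{\oo_{V,v}^{\sh}}\cong\widehat{\oo_{Y,y}^{\sh}}.
\]
It then suffices to treat $y\in Y$. Its residue field is $\ka(y)=k(t_1,\dots,t_n)$, which is separable over $k$. Moreover $\oo_{Y,y}\cong k(t_1,\dots,t_n)[a,b]_{(a,b)}/(ab)$, so $\widehat{\oo_{Y,y}}\cong\ka(y)[\![a,b]\!]/(ab)$ as $k$-algebras. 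Lemma~\ref{completed_strict_Hensel} upgrades this to the completed strict henselization, so $y$ is a node. Separability of $\ka(x)/k$ also transfers: $\ka(v)$ is finite separable over both $\ka(x)$ and $\ka(y)$, and $\ka(y)/k$ is separable, so $\ka(v)/k$ is separable and hence so is its subfield $\ka(x)$. Being a node is thus transported along the étale maps to $x$, giving $(A)$.

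For $(A)\Rightarrow(B)$, I first apply Lemma~\ref{etale_nbd} $(C)\Rightarrow(D)$. This produces a pointed étale $(U,u)\to(X,x)$ with $\widehat{\oo_{U,u}}\cong\ka(u)[\![a,b]\!]/(ab)$ as $k$-algebras, where $\ka(u)/k$ is separable. Next, Lemma~\ref{model_for_Artin_approx} gives $Z=\Spec A[a,b]/(ab)$ and $z=(a,b)$ with $\widehat{\oo_{U,u}}\cong\widehat{\oo_{Z,z}}$ as $k$-algebras. Here $\Spec A\to\A^n_k$ is étale, so $Z\to Y=\Spec k[a,b,t_1,\dots,t_n]/(ab)$, obtained by base change along $A[a,b]\leftarrow k[t,a,b]$, is étale. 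Under this map $z$ goes to $y=(a,b)$, since $z$ is the generic point of the locus $a=b=0$, which dominates $\A^n_k\times\{0\}$. Composing, it remains to find a common étale neighbourhood $(U,u)\leftarrow(V,v)\rightarrow(Z,z)$. Combined with $U\to X$ and $Z\to Y$, this gives $(B)$.

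The main step is to realize the formal isomorphism $\widehat{\oo_{U,u}}\cong\widehat{\oo_{Z,z}}$ by an étale roof. For this I would invoke Artin's theorem on the étale-local equivalence of schemes of finite type over a field with isomorphic complete local rings (Artin approximation, \cite[Cor.~2.6]{Art69}, or \cite[\href{https://stacks.math.columbia.edu/tag/0CAV}{Tag 0CAV}]{stacks-project}). Its hypotheses require both $U$ and $Z$ to be of finite type over the same field $k$ and the isomorphism of completions to be a $k$-algebra isomorphism. This is exactly why every lemma above was arranged to preserve $k$-structures via the separability of $\ka(u)/k$.

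The obstacle is that Artin's theorem is usually stated for closed points, or for points of schemes of finite type over a field or an excellent Dedekind base, whereas $u$ and $z$ are codimension-one points with transcendental residue field. If the cited version needs closed points, I would reduce to that case in one of two ways. The first is to spread out over a dense open of the closures and pass to the semilocal, essentially finite type setting, using the general form of Artin approximation for $G$-rings (Popescu). The second is to apply \cite[\href{https://stacks.math.columbia.edu/tag/0CAV}{Tag 0CAV}]{stacks-project} over the base $\Spec k$, which handles arbitrary points of finite type $k$-schemes. I expect verifying this applicability to be the delicate point; the rest is bookkeeping.
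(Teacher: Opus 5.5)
Your proposal is correct and follows the paper's proof essentially step for step. For $(A)\Rightarrow(B)$ it uses Lemma~\ref{etale_nbd}, then Lemma~\ref{model_for_Artin_approx}, then Artin approximation in the form \cite[\href{https://stacks.math.columbia.edu/tag/0CAV}{Tag 0CAV}]{stacks-project} over $\Spec k$; that form already allows non-closed points because a field is trivially a G-ring, so the paper applies it directly and your Popescu fallback is unnecessary. For the converse it uses \cite[\href{https://stacks.math.columbia.edu/tag/094Z}{Tag 094Z}]{stacks-project} together with Lemma~\ref{completed_strict_Hensel}. The only cosmetic difference is that you check $z\mapsto y$ via dominance of $\Spec A\to\A^n_k$, whereas the paper uses injectivity of the flat map $k[t_1,\dots,t_n]\to A$ to see that $(a,b)$ contracts to $(a,b)$.
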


\begin{proof}
$(A)\Rightarrow(B)$: By Lemma~\ref{etale_nbd}, there exists a pointed \'etale morphism $(U,u)\to (X,x)$ such that $\widehat{\oo_{U,u}}\cong\ka(u)[\![a,b]\!]/(ab)$ as $k$-algebras. Since $\ka(u)/\ka(x)$ and $\ka(x)/k$ are separable, $\ka(u)/k$ is separable. Thus Lemma~\ref{model_for_Artin_approx} yields a scheme $Z=\Spec A[a,b]/(ab)$ of finite type over $k$, where $\Spec A$ is \'etale over $\A^n_k$, and a point $z=(a,b)\in Z$ such that there is a $k$-algebra isomorphism $\widehat{\oo_{U,u}}\cong\widehat{\oo_{Z,z}}.$ Applying Artin approximation in the form \cite[\href{https://stacks.math.columbia.edu/tag/0CAV}{Tag 0CAV}]{stacks-project} (cf. \cite[Cor.~2.6]{Art69}), we find a common \emph{elementary} \'etale neighborhood $V$, namely a pointed scheme $(V,v)$ and pointed \'etale $k$-morphisms 
\[
(U,u)\leftarrow(V,v)\rightarrow(Z,z)
\]
with $\ka(u)\cong\ka(v)\cong\ka(z)$. Since $\Spec A$ is \'etale over $\A^n_k$, its base change 
\[
Z\cong\Spec A\times_k\Spec k[a,b]/(ab)\to\A^n_k\times_k\Spec k[a,b]/(ab)\cong Y
\]
is also \'etale. Moreover, this morphism maps $z$ to $y$. Indeed, because $\Spec A\to\A^n_k$ is \'etale, the induced map $B:=k[t_1,\dots,t_n]\to A$ is flat and, since $B$ is a domain and $A\ne0$, injective. Thus the induced ring map $B[a,b]/(ab)\to A[a,b]/(ab)$ contracts the ideal $(a,b)$ to $(a,b)$, i.e. $Z\to Y$ maps $z$ to $y$. Therefore, the composition gives the \'etale roof:
\[
(X,x)\leftarrow(U,u)\leftarrow(V,v)\rightarrow(Z,z)\rightarrow(Y,y).
\]

$(B)\Rightarrow(A)$: Since $\ka(v)$ is separable over $\ka(y)$ and $\ka(y)\cong k(t_1,\dots,t_n)$ is separable over $k$, $\ka(v)$ and hence $\ka(x)$ is separable over $k$. The \'etale morphisms $V\to X$ and $V\to Y$ induce $k$-algebra isomorphisms $\widehat{\oo_{X,x}^{\sh}}\cong\widehat{\oo_{V,v}^{\sh}}\cong\widehat{\oo_{Y,y}^{\sh}}$ \cite[\href{https://stacks.math.columbia.edu/tag/094Z}{Tag 094Z}]{stacks-project}. Applying Lemma~\ref{completed_strict_Hensel} to $y\in Y$ gives $\widehat{\oo_{Y,y}^{\sh}}\cong\ka(x)^{\sep}[\![a,b]\!]/(ab)$, hence $(A)$ follows.
\end{proof}

\begin{remark}
Likewise, one can characterize a \emph{split node}: $\ka(x)/k$ is separable and the completion $\widehat{\oo_{X,x}}$ is isomorphic as $k$-algebras to $\ka(x)[\![a,b]\!]/(ab)$ if and only if there exist pointed \'etale $k$-morphisms $(X,x)\leftarrow(V,v)\rightarrow(Y,y)$ with $\ka(x)\cong\ka(v)$, where $(Y,y)$ is defined as in Theorem~\ref{Geometric_characterization_of_nodes}.
\end{remark}

\section{Geometric \texorpdfstring{$N_1$}{N1} and its Bertini theorems}

In this section, we define the geometric $N_1$ property and prove its Bertini theorems over infinite fields (Theorem~\ref{N1_Bertini_infinite_field}) and finite fields (Theorem~\ref{N1_Bertini_finite_field}).

\subsection{Geometric \texorpdfstring{$N_1$}{N1}}

Let $X$ be an equidimensional scheme locally of finite type over a field $k$. We say that $X$ is \emph{geometrically $R_1$} if every codimension-one point of $X$ is geometrically regular. Equivalently, the morphism $X\to\Spec k$ is smooth at every codimension-one point of $X$ \cite[\href{https://stacks.math.columbia.edu/tag/038X}{Tag 038X}]{stacks-project}.

\begin{definition}\label{Def_geo_N1}
We say that $X$ is \emph{geometrically $N_1$} (resp.~$N_1$) if every codimension-one point of $X$ is either a geometrically regular (resp.~regular) point or a node (Definition~\ref{definition_node}).
\end{definition}

\begin{lemma}\label{snc_implies_geo_N1}
Let $k$ be a field. The standard SNC scheme
\[
X=\Spec k[x_1,x_2,\dots, x_m]/(x_1x_2\cdots x_r)
\]
is geometrically $N_1$.
\end{lemma}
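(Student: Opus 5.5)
We must show that every codimension-one point $\xi$ of $X$ is either geometrically regular or a node. $X$ is equidimensional of dimension $m-1$: its irreducible components are the hyperplanes $\{x_i=0\}$ for $1\le i\le r$. If $r=0$ then $X$ is empty, and if $r=1$ then $X\cong\A^{m-1}_k$ is smooth, so both cases are trivial. Assume $r\ge2$. Let $\xi$ be a codimension-one point, so $\dim\oo_{X,\xi}=1$. Consider the set $I=\{i\le r : x_i\in\p_\xi\}$, which is nonempty. The components through $\xi$ are exactly $\{x_i=0\}$ for $i\in I$. If $|I|\ge3$, the closure of $\xi$ lies in an intersection $\{x_i=x_j=x_l=0\}$, which has codimension $2$ in $X$; this contradicts $\dim\oo_{X,\xi}=1$. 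Hence $|I|\in\{1,2\}$.

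\textbf{Case $|I|=1$,} say $I=\{i\}$. Then for $j\ne i$ the elements $x_j$ with $j\le r$ are units in $\oo_{X,\xi}$. Near $\xi$, $X$ therefore coincides with the open subscheme $D(\prod_{j\ne i}x_j)\cap\{x_i=0\}$ of the hyperplane $\{x_i=0\}\cong\A^{m-1}_k$, which is smooth over $k$. So $\xi$ is geometrically regular.

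\textbf{Case $|I|=2$,} say $I=\{1,2\}$. Put $W=D(x_3\cdots x_r)\subset X$, an open neighbourhood of $\xi$. On $W$ we have
\[
W\cong\Spec k[x_1,\dots,x_m,(x_3\cdots x_r)^{-1}]/(x_1x_2),
\]
since $x_1x_2$ and $x_1\cdots x_r$ differ by a unit. This is an open subscheme of $Y=\Spec k[a,b,t_1,\dots,t_{m-2}]/(ab)$, with $a=x_1$, $b=x_2$ and $t_j=x_{j+2}$. The point $\xi$ has codimension one and contains $a$ and $b$. Since $(a,b)$ is a prime of height one in $Y$, and $\p_\xi\supseteq(a,b)$ has height one, we get $\p_\xi=(a,b)=y$.

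The open immersion $(W,\xi)\to(X,\xi)$ together with $(W,\xi)\to(Y,y)$ gives a roof of pointed étale $k$-morphisms as in condition $(B)$ of Theorem~\ref{Geometric_characterization_of_nodes}, with $n=m-2$. Therefore $\xi$ is a node. Alternatively, one can check directly that $\ka(\xi)=k(x_3,\dots,x_m)$ is separable over $k$, and then apply Lemma~\ref{completed_strict_Hensel} to $\widehat{\oo_{X,\xi}}\cong\ka(\xi)[\![a,b]\!]/(ab)$.

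The only point needing care is the identification $\p_\xi=(x_1,x_2)$ in the second case, which is the height computation above; the rest is routine.
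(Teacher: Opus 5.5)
Your proof is correct and follows the paper's argument. You use the same case split on $I=\{i\le r: x_i\in\p_\xi\}$, force $|I|\in\{1,2\}$ by a height count, get smoothness when $|I|=1$, and show $\p_\xi=(x_1,x_2)$ when $|I|=2$. The only difference is cosmetic: your main route concludes via the easy direction $(B)\Rightarrow(A)$ of Theorem~\ref{Geometric_characterization_of_nodes}, using the open neighbourhood $W=D(x_3\cdots x_r)\subset Y$. The paper instead observes directly that $\ka(\xi)=k(x_3,\dots,x_m)$ is separable and $\widehat{\oo_{X,\xi}^{\sh}}\cong\ka(\xi)^{\sep}[\![x_1,x_2]\!]/(x_1x_2)$, which is exactly your stated alternative.
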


\begin{proof}
Let $A:=k[x_1,x_2,\dots, x_m]/(x_1x_2\cdots x_r)$. Each codimension-one point $x\in X$ corresponds to a prime $\p\subset A$ of height 1. Its preimage $P$ in $k[x_1,x_2,\dots, x_m]$ is a prime and contains $x_1x_2\cdots x_r$, so $x_i\in P$ for some $1\le i\le r$. Let $I=\{\,i\in\{1,\cdots,r\}\mid x_i\in P\,\}$. Since $\p$ has height 1, $P$ has height 2, hence $1\le|I|\le2$. 

Case 1: $|I|=1$. We may assume $I=\{1\}$. In the localization $A_{\p}$, $x_2,\dots,x_r$ are units, so $\oo_{X,x}\cong (k[x_1,x_2,\dots, x_m]/(x_1))_{\p}$ is a localization of $k[x_2,\dots, x_m]$ and thus is smooth over $k$.

Case 2: $|I|=2$. We may assume $I=\{1,2\}$. Since $P$ has height 2, $P=(x_1,x_2)$. Since $\ka(x)\cong k(x_3,\dots, x_m)$ is separable over $k$ and $\widehat{\oo_{X,x}^{\sh}}\cong \ka(x)^{\sep}[\![x_1,x_2]\!]/(x_1x_2)$ as $k$-algebras, $x\in X$ is a node.
\end{proof}

\begin{lemma}\label{geo_N1_implies_base_change_N1}
Let $X$ be a 1-dimensional scheme locally of finite type over a field $k$. If $X$ is geometrically $N_1$, then $X_{\ol k}:=X\times_k\ol k$ is $N_1$. 
\end{lemma}

\begin{proof}
Let $\ol x\in X_{\ol k}$ be a closed point. Since $\ol k/k$ is integral, its base change $X_{\ol k}\to X$ is integral and hence closed \cite[\href{https://stacks.math.columbia.edu/tag/01WM}{Tag 01WM}]{stacks-project}; thus the image $x\in X$ of $\ol x$ is a closed point. Because $X$ is geometrically $N_1$, $x$ is either geometrically regular, in which case $\ol x$ is a regular point, or a node, in which case $\ol x$ is a node by item~(3) of \cite[\href{https://stacks.math.columbia.edu/tag/0C4D}{Tag 0C4D}]{stacks-project}.
\end{proof}

\subsection{Hypersurface sections}

Let $k$ be a field and let $d\ge1$ be an integer. We identify degree $d$ hypersurfaces $H\subset\pp^N_k$ with the $k$-points of the projective space $\pp(H^0(\pp^N_k,\oo_{\pp^N_k}(d)))$. For a subscheme $X\subseteq\pp^N_k$, the notation $X\cap H$ denotes the scheme-theoretic intersection.

\subsection{A Bertini criterion for \texorpdfstring{$R_a$}{Ra}}

Let $X$ be a locally Noetherian scheme. For integers $a,b\ge0$, we say that $X$ is $R_a$ (resp.~$S_b$) if $X$ satisfies Serre's condition $R_a$ (resp.~$S_b$); cf. \cite[\href{https://stacks.math.columbia.edu/tag/033P}{Tag 033P}]{stacks-project}. When $X$ is excellent (e.g. any scheme locally of finite type over a field), its regular locus is an open subscheme, denoted by $X_{\reg}$. Its complement $X_{\sing}:=X\setminus X_{\reg}$ is the singular locus, which we endow with the reduced scheme structure. For a morphism $X\to\Spec k$ over a field $k$, its smooth locus is denoted by $X_{\sm}$.

The following lemma is extracted from the proof of \cite[Thm.~3.4]{GK23}. We state the result in a form suited for our applications.

\begin{lemma}\label{divisor_Ra}
Let $X$ be a locally Noetherian excellent scheme of pure dimension $n$. Assume $X$ is $R_a$ for some $a\ge0$. Let $D$ be an effective Cartier divisor on $X$ that contains no generic point of $X_{\sing}$ of codimension $a+1$ in $X$. If $X_{\reg}\cap D$ is $R_a$, so is $D$.
\end{lemma}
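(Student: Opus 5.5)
We verify $R_a$ for $D$ directly: take a point $y\in D$ with $\dim\oo_{D,y}\le a$ and show $\oo_{D,y}$ is regular. The two cases are $y\in X_{\reg}$ and $y\in X_{\sing}$.

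If $y\in X_{\reg}$, then $y$ lies in the open subscheme $X_{\reg}\cap D$ of $D$, and $\oo_{D,y}=\oo_{X_{\reg}\cap D,y}$. The hypothesis that $X_{\reg}\cap D$ is $R_a$ then gives regularity at once. Excellence is used only to know that $X_{\reg}$ is open, so that this intersection is an open subscheme of $D$ with the same local rings.

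Now suppose $y\in X_{\sing}$; the aim is to show this cannot occur. Because $D$ is an effective Cartier divisor, a local equation $f$ of $D$ at $y$ is a nonzerodivisor. Hence $\dim\oo_{D,y}=\dim\oo_{X,y}-1$, which forces $\dim\oo_{X,y}\le a+1$. The ring $\oo_{X,y}$ is not regular and $X$ is $R_a$, so $\dim\oo_{X,y}=a+1$ exactly. Since $X_{\sing}$ is closed and every point $y'$ with $\dim\oo_{X,y'}\le a$ is regular, every generization of $y$ inside $X_{\sing}$ has codimension $\ge a+1$ in $X$. Codimension strictly drops under proper generization, so $y$ must be a generic point of $X_{\sing}$, and it has codimension $a+1$ in $X$. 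This contradicts the assumption that $D$ contains no such point.

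Therefore every point of $D$ of codimension $\le a$ in $D$ lies in $X_{\reg}\cap D$, and $D$ is $R_a$. The step needing the most care is the dimension count $\dim\oo_{D,y}=\dim\oo_{X,y}-1$. It uses Krull's principal ideal theorem together with the fact that $f$ avoids the minimal primes of $\oo_{X,y}$ (because $f$ is a nonzerodivisor). Pure-dimensionality of $X$, or catenarity from excellence, is needed so that codimension in $D$ and codimension in $X$ compare as expected. The local computation above works directly with $\dim\oo_{D,y}$, so it avoids most global issues.
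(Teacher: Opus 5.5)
Your proof is correct. It reaches the same key fact as the paper (every point of $D$ of codimension $\le a$ lies in the open subscheme $X_{\reg}\cap D$), but by a different route.

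The paper argues globally with dimensions of closed subsets. $R_a$ gives $\dim X_{\sing}\le n-a-1$, the hypothesis on $D$ gives $\dim(X_{\sing}\cap D)\le n-a-2$, and so $X_{\sing}\cap D$ has codimension at least $(n-1)-(n-a-2)=a+1$ in $D$.

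You argue pointwise on local rings. For $y\in X_{\sing}\cap D$ with $\dim\oo_{D,y}\le a$, the nonzerodivisor $f$ gives $\dim\oo_{X,y}=\dim\oo_{D,y}+1\le a+1$, and $R_a$ forces equality. Height drops strictly under proper generization, so $y$ must be a generic point of $X_{\sing}$ of codimension $a+1$, which $D$ was assumed to avoid.

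The paper's count is shorter and more geometric. However, its last step uses that a point $z$ of $D$ has codimension $(n-1)-\dim\overline{\{z\}}$ in $D$. That follows from the dimension theory of schemes of finite type over a field, which is where the lemma is applied (Corollary~\ref{Bertini_Ra}). It fails for general locally Noetherian excellent schemes: if $R$ is a DVR with uniformizer $\pi$, the closed point $\pi t=1$ of $\Spec R[t]$ has height one but dimension zero in a two-dimensional scheme.

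Your argument uses only Krull's principal ideal theorem and the paper's own convention that the codimension of a point is $\dim\oo_{X,y}$. So it proves the lemma in the full stated generality. Contrary to your closing remark, it uses neither pure dimensionality nor catenarity; excellence enters only through the openness of $X_{\reg}$.
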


\begin{proof}
Since $X$ is $R_a$, $\dim X_{\sing}\le n-a-1$. We have $\dim(X_{\sing}\cap D)\le n-a-2$: this is clear if $\dim X_{\sing}\le n-a-2$, and thus it can fail only if $X_{\sing}$ has a generic point of dimension $n-a-1$ contained in $D$, which cannot happen by the assumption on $D$. Thus the codimension of $X_{\sing}\cap D$ inside $D$ is at least $(n-1) - (n-a-2) = a+1.$ As $D=(X_{\reg}\cap D)\sqcup (X_{\sing}\cap D)$ and $X_{\sing}\cap D$ has codimension $\ge a+1$ in $D$, any point $x\in D$ of codimension $\le a$ lies in $X_{\reg}\cap D$. Since $X$ is excellent, $X_{\reg}\cap D$ is open in $D$. Thus $\oo_{D, x}=\oo_{X_{\reg}\cap D,x}$, $x\in X_{\reg}\cap D$ also has codimension $\le a$ and, as $X_{\reg}\cap D$ is $R_a$, $\oo_{D, x}$ is regular. 
\end{proof}

\begin{corollary}\label{Bertini_Ra}
Let $X\subseteq\pp^N_k$ be a quasi-projective scheme of pure dimension $n$ over a field $k$. Assume $X$ is $R_a$ for some $a\ge0$. Let $H\subset\pp^N_k$ be a hypersurface that contains no associated point of $X$ and no generic point of $X_{\sing}$ of codimension $a+1$ in $X$. If $X_{\reg}\cap H$ is $R_a$, so is $X\cap H$. 
\end{corollary}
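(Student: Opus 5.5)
The plan is to deduce Corollary~\ref{Bertini_Ra} from Lemma~\ref{divisor_Ra} by taking $D:=X\cap H$. Since Lemma~\ref{divisor_Ra} applies to effective Cartier divisors on a locally Noetherian excellent scheme of pure dimension $n$, I need to check these hypotheses and match the remaining ones.

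First, $X$ is quasi-projective over $k$, so it is locally of finite type over a field and hence locally Noetherian and excellent. It is of pure dimension $n$ and $R_a$ by assumption.

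Second, I will show that $D=X\cap H$ is an effective Cartier divisor on $X$. Let $F$ be a homogeneous equation of $H$ of degree $d$. On a standard affine open $D_+(x_i)\cap X=\Spec B$, the subscheme $X\cap H$ is cut out by the single element $f=F/x_i^d\in B$. Since $H$ contains no associated point of $X$, the element $f$ lies in no associated prime of $B$. Hence $f$ is a nonzerodivisor on the Noetherian ring $B$. So $X\cap H$ is locally defined by one nonzerodivisor, which means it is an effective Cartier divisor.

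Third, $D$ contains no generic point of $X_{\sing}$ of codimension $a+1$ in $X$, because $H$ contains none. Also $X_{\reg}\cap D=X_{\reg}\cap H$, which is $R_a$ by hypothesis. Lemma~\ref{divisor_Ra} then gives that $D=X\cap H$ is $R_a$.

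The only step requiring any care is the Cartier-divisor verification, namely that avoiding associated points gives a nonzerodivisor locally. This is the standard fact that the zerodivisors of a Noetherian ring are exactly the union of its associated primes. Everything else is a direct substitution into Lemma~\ref{divisor_Ra}.
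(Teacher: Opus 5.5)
Your argument is correct and is the same as the paper's: the paper notes that a hypersurface avoiding the associated points of $X$ cuts out an effective Cartier divisor $D=X\cap H$, and then applies Lemma~\ref{divisor_Ra}. One small nit is that for quasi-projective $X$ the set $D_+(x_i)\cap X$ need not be affine, so run your nonzerodivisor argument on affine opens of $X$ contained in $D_+(x_i)$ (or directly in the local rings $\oo_{X,x}$); this changes nothing else.
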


\begin{proof}
Since a hypersurface $H$ that does not contain any associated point of $X$ defines an effective Cartier divisor $D=X\cap H$, Lemma~\ref{divisor_Ra} applies.
\end{proof}

\subsection{Infinite fields} Suppose $k$ is an infinite field and $P(H)$ is a statement depending on $H$. We say that for a \emph{general} hypersurface $H\subset\pp^N_k$ of degree $d$, $P(H)$ holds if there exists a nonempty open subscheme
$U\subset\pp(H^0(\pp^N_k,\oo_{\pp^N_k}(d)))$ such that $P(H)$ holds for every $k$-point $H\in U(k)$.

\begin{lemma}\label{smooth_codim_Bertini_infinite}
Let $X\subseteq\pp^N_k$ be a quasi-projective scheme of pure dimension $n$ over an infinite field $k$. If $X$ is smooth (resp.~regular) in codimension $a$, then for each integer $d\ge1$, a general hypersurface $H\subset\pp^N_k$ of degree $d$ has the property that $X\cap H$ is smooth (resp.~regular) in codimension $a$.
\end{lemma}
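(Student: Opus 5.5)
I would prove Lemma~\ref{smooth_codim_Bertini_infinite} by combining two inputs. The first is the classical Bertini theorem over an infinite field for the smooth (resp.~regular) locus. The second is the codimension bookkeeping already packaged in Lemma~\ref{divisor_Ra} and Corollary~\ref{Bertini_Ra}. I treat the regular case via those results and the smooth case by the same argument with $X_{\reg}$ replaced by the smooth locus $X_{\sm}$; this locus is open since $X$ is of finite type over $k$.

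\emph{Step 1: general position conditions.} The following sets are finite:
\begin{itemize}
\item the set of associated points of $X$ (as $X$ is Noetherian);
\item the set of generic points of $X_{\sing}$ (resp.~of $X\setminus X_{\sm}$) of codimension $a+1$ in $X$.
\end{itemize}
For each such point $\xi$, the degree $d$ hypersurfaces containing the closure of $\xi$ in $\pp^N_k$ form a proper linear subspace of $\pp(H^0(\pp^N_k,\oo_{\pp^N_k}(d)))$. This subspace is proper because some degree $d$ monomial is nonzero on the closure, so some form avoids vanishing there. The complement of finitely many such subspaces is a nonempty open set. For $H$ in it, $D=X\cap H$ is an effective Cartier divisor on $X$, of pure dimension $n-1$, containing none of the bad generic points.

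\emph{Step 2: Bertini on the good locus.} Apply the classical Bertini theorem for smoothness over an infinite field (e.g.\ Jouanolou, or \cite[Thm.~II.8.18]{Har77} in its generalized form). It gives a nonempty open set of $H$ for which $X_{\sm}\cap H$ is smooth over $k$. In the smooth case, Lemma~\ref{divisor_Ra} with $X_{\reg}$ replaced by $X_{\sm}$ then shows that $D$ is smooth in codimension $a$; its proof is purely dimension counting and transfers verbatim.

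\emph{The regular case, and the main obstacle.} The regular case is the main obstacle, since regular points need not be smooth when $k$ is imperfect. A general hyperplane section of a regular scheme over an imperfect field is still regular at the relevant points, but this is a less standard Bertini statement. I would avoid needing it everywhere: only the codimension $\le a$ points of $D$ matter. By the codimension count in Lemma~\ref{divisor_Ra}, those points lie in $X_{\reg}$ and are not generic points of $X$. It therefore suffices to show $\oo_{D,x}=\oo_{X,x}/(h)$ is regular at them, i.e.\ that $h\notin\m_x^2$ for the local equation $h$ of $H$.

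I would argue this by a dimension count on the incidence variety
\[
\{(x,H): h\in\m_x^2\}.
\]
For a point $x$ with residue field $\ka(x)$, the condition $h\in\m_x^2$ imposes at least $1+\dim(\m_x/\m_x^2)$ conditions. These conditions are independent because $\oo_{\pp^N}(d)$ is very ample and the restriction map $H^0(\oo(d))\to\oo_{X,x}/\m_x^2$ is surjective after base change. For points of $X_{\reg}$ of codimension $c$, the condition count is $1+(n-c)\ge 1+\dim\overline{\{x\}}$. Hence the projection of the incidence variety to the space of hypersurfaces is not dominant, stratum by stratum, over the finitely many constructible strata of codimension $\le a+1$.

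This incidence-variety count is what I expect to need the most care, particularly with non-closed points and imperfect residue fields. If it becomes too delicate, the fallback is to cite the known Bertini theorem for regularity over infinite fields (e.g.\ via Flenner's or Cumino--Greco--Manaresi's results on local Bertini). Finally, intersecting the finitely many nonempty open sets from the steps above gives the general $H$.
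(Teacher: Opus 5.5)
\textbf{Smooth case.} Your treatment of the smooth case is the paper's proof. The paper reduces to $d=1$ via the $d$-uple embedding and applies \cite[Cor.~6.11(2)]{Jou83} to $X_{\sm}$. It then takes $H$ off the associated points of $X$ and of $Z=X\setminus X_{\sm}$, and runs exactly the codimension count of Lemma~\ref{divisor_Ra}.

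\textbf{Regular case.} Here the paper does nothing more than your fallback. It replaces $X_{\sm}$ by $X_{\reg}$ and cites Bertini for regularity over an arbitrary infinite field (\cite[Thm.~1]{Sei50}, \cite[Lem.~2.4]{GK23}), applied to all of $X_{\reg}$. There is no need to single out points of codimension $\le a$. So with the citation your proof is complete and matches the paper.

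\textbf{Your incidence-variety route.} The count you offer as your primary route does not work as written.
\begin{enumerate}
\item For $x\in X_{\reg}$ of codimension $c$ one has $\dim_{\ka(x)}\m_x/\m_x^2=c$, not $n-c$. Your tally therefore compares $1+c$ conditions against the $(n-c)$-dimensional family $\overline{\{x\}}$, and proves nothing once $c<n/2$.
\item For non-closed $x$, the ring $\oo_{X,x}/\m_x^2$ is infinite-dimensional over $k$. So ``number of conditions on $H$'' has no meaning at such points.
\end{enumerate}

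\textbf{How to repair it.} The count must be done at closed points and over $\ol k$.
\begin{enumerate}
\item Every point of $X_{\reg}\cap H$ specializes to a closed point, and regularity passes to localizations. Hence it suffices to have $h\notin\m_x^2$ at closed points $x\in X_{\reg}\cap H$.
\item For a closed point $\ol x\in X_{\ol k}$ over $x$, one has $\m_x^2\oo_{X_{\ol k},\ol x}\subseteq\m_{\ol x}^2$. So it is enough that $H_{\ol k}$ contains no Zariski tangent space $T_{\ol x}X_{\ol k}$.
\item After reducing to $d=1$, the hyperplanes containing $T_{\ol x}X_{\ol k}$ form a linear subspace of codimension $1+j$, where $j=\dim T_{\ol x}X_{\ol k}$. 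Since $X_{\ol k}$ is pure of dimension $n$, we have $j\ge n$.
\item Stratify the closed points of $X_{\ol k}$ by $j$. Over each stratum the tangency incidence has dimension at most $n+(N-1-j)\le N-1$.
\item Hence its image in $(\pp^N_{\ol k})^{\vee}$, and then in $(\pp^N_k)^{\vee}$, lies in a proper closed subset.
\end{enumerate}
This proves Bertini for regularity over any infinite field, perfect or not, without ever passing through smoothness. It is the input your argument needs.
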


\begin{proof}
By replacing $\pp^N_k$ by its $d$-uple embedding, we may assume $d=1$. The smooth locus $X_{\sm}$ of $X\to\Spec k$ is open in $X$, so $Z:=X\setminus X_{\sm}$ is closed, which we endow with the reduced scheme structure. Since $X$ is smooth in codimension $a$, $\dim Z\le n-a-1$. By the Bertini smoothness theorem \cite[Cor.~6.11(2)]{Jou83} applied to $X_{\sm}$, there exists a nonempty open subscheme
$U\subset\pp(H^0(\pp^N_k,\oo_{\pp^N_k}(1)))=:\pp$ such that every $H\in U(k)$ has $X_{\sm}\cap H$ smooth. On the other hand, the set of hyperplanes containing any of the finitely many associated points of $X$ or of $Z$ forms a proper linear subspace $F$ of $\pp$. Thus $U\setminus F$ is nonempty open in $\pp$. Since $k$ is infinite, $U\setminus F$ contains a $k$-rational point; let $H$ be such a point. Since $H$ avoids all generic points of $X$ and $Z$, $X\cap H$ has pure dimension $n-1$ and $\dim Z\cap H\le n-a-2$. In particular, $Z\cap H$ contains no point of $X\cap H$ of codimension $\le a$. Thus every point of $X\cap H$ of codimension $\le a$ lies in $X_{\sm}\cap H$, which is smooth by construction. Thus $X\cap H$ is smooth in codimension $a$. The same argument applies when $X$ is regular in codimension $a$, provided we replace ``smooth'' with ``regular'', the smooth locus $X_{\sm}$ with the regular locus $X_{\reg}$, and Bertini's theorem for smoothness with that for regularity (\cite[Thm.~1]{Sei50}; see also \cite[Lem.~2.4]{GK23}).
\end{proof}

\begin{lemma}\label{separable_residue_field_Bertini_infinite_field} 
Let $X\subseteq\pp^N_k$ be a quasi-projective scheme over an infinite field $k$. Suppose $x\in X$ is a point with $\ka(x)$ separable over $k$. Then for every integer $d\ge1$, a general hypersurface $H\subset\pp^N_k$ of degree $d$ has the property that all generic points $h$ of $\overline{\{x\}}\cap H$ have $\ka(h)$ separable over $k$.
\end{lemma}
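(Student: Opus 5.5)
Set $W:=\overline{\{x\}}\subseteq X$ with its reduced structure; it is an integral quasi-projective $k$-scheme with generic point $x$ and function field $\ka(x)$. The plan is to show that $W$ is geometrically reduced on a dense open, hence generically smooth, and then to apply the Bertini smoothness theorem to that smooth open. Concretely, since $\ka(x)/k$ is separable (in the sense of being separably generated, equivalently $\ka(x)\otimes_k\ol k$ reduced for finitely generated extensions), the morphism $W\to\Spec k$ is smooth at its generic point by \cite[\href{https://stacks.math.columbia.edu/tag/00TV}{Tag 00TV}]{stacks-project}, exactly as in the proof of Lemma~\ref{model_for_Artin_approx}. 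Hence the smooth locus $W_{\sm}$ is a dense open subscheme of $W$. Put $Z:=W\setminus W_{\sm}$, a closed subset with $\dim Z<\dim W$.

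As in the proof of Lemma~\ref{smooth_codim_Bertini_infinite}, I would first pass to the $d$-uple embedding to assume $d=1$. Then \cite[Cor.~6.11(2)]{Jou83} applied to $W_{\sm}\subseteq\pp^N_k$ gives a nonempty open $U\subset\pp:=\pp(H^0(\pp^N_k,\oo(1)))$ such that $W_{\sm}\cap H$ is smooth over $k$ for all $H\in U(k)$. The hyperplanes containing $x$ or any generic point of $Z$ form a finite union of proper linear subspaces $F\subset\pp$. Since $k$ is infinite, $(U\setminus F)(k)\neq\emptyset$; take $H$ there. If $W\cap H$ is empty there is nothing to prove. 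Otherwise, $H\not\ni x$ means $H$ cuts $W$ in a Cartier divisor, so every irreducible component of $W\cap H$ has dimension $\dim W-1$. Meanwhile, $H$ avoids the generic points of $Z$, so $\dim(Z\cap H)\le\dim W-2$. Therefore no generic point $h$ of $W\cap H$ lies in $Z$; that is, every such $h$ lies in $W_{\sm}\cap H$.

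Now $W_{\sm}\cap H$ is smooth over $k$, so its local ring at a generic point $h$ is a field $\ka(h)$ that is smooth over $k$, hence separable (geometrically reduced) over $k$. Note that $\overline{\{x\}}\cap H$ in the statement is scheme-theoretically $W\cap H$, and its generic points are the same regardless of nilpotent structure, so this proves the claim for general $H$.

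The main obstacle is making sure the Bertini input applies over an arbitrary infinite field with the right conclusion. Jouanolou's theorem gives smoothness of $W_{\sm}\cap H$ for general $H$ even in positive characteristic, because smoothness (not mere regularity) is the hypothesis; this is exactly why the argument needs $W$ generically smooth, i.e. the separability of $\ka(x)$, rather than just reducedness. The dimension bookkeeping that excludes $Z$ mirrors Lemma~\ref{smooth_codim_Bertini_infinite} and should be routine.
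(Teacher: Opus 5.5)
Your proof is correct, but it uses a different Bertini input than the paper.

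The paper uses the separability of $\ka(x)$ to show that $\overline{\{x\}}$, with its reduced structure, is geometrically reduced everywhere (Stacks Tag 04KS). It then applies Jouanolou's Bertini theorem for geometric reducedness directly to $\overline{\{x\}}\hookrightarrow\pp^N_k\hookrightarrow\pp^M_k$. Separability of $\ka(h)=\oo_{\overline{\{x\}}\cap H,h}$ is then read off from the geometric reducedness of the section. Because that property holds on all of $\overline{\{x\}}$, there is no bad locus to avoid and no dimension count.

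You instead extract from separability only smoothness at the generic point. This forces you to apply Bertini smoothness to the open $W_{\sm}$ and then push the generic points of $W\cap H$ off $Z=W\setminus W_{\sm}$ by a dimension count. Your bookkeeping there is fine: $W\cap H$ is a Cartier divisor in the integral $W$, and $\dim(Z\cap H)\le\dim W-2$. Your argument is in fact exactly Lemma~\ref{smooth_codim_Bertini_infinite} with $a=0$ applied to $\overline{\{x\}}$, which is smooth in codimension $0$, so you could simply cite that lemma.

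What your route buys is a stronger conclusion. Each generic point of $\overline{\{x\}}\cap H$ is a smooth point, which gives both the $R_0$ property and the separability of $\ka(h)$ at once. The paper obtains these two facts separately in Theorem~\ref{N1_Bertini_infinite_field}, from Lemma~\ref{smooth_codim_Bertini_infinite} and from this lemma. What the paper's route buys is brevity: no generic-smoothness step and no need to control the non-smooth locus.
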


\begin{proof}
Equip the closure $\overline{\{x\}}$ with the reduced scheme structure. Its function field $\ka(x)$ is separable over $k$ by assumption, so $\overline{\{x\}}$ is geometrically reduced over $k$ (\cite[\href{https://stacks.math.columbia.edu/tag/04KS}{Tag 04KS}]{stacks-project} item~(4)). Applying \cite[Cor.~6.11(2)]{Jou83} to the closed embedding $\overline{\{x\}}\hookrightarrow\pp^N_k\overset{\text{$d$-uple}}{\hookrightarrow}\pp^M_k$, we find for each $d\ge1$ a dense open subscheme $R_d\subset\pp(H^0(\pp^N_k,\oo_{\pp^N_k}(d)))$ such that every $H\in R_d(k)$ has $\overline{\{x\}}\cap H$ geometrically reduced over $k$. In particular, $\oo_{\overline{\{x\}}\cap H, h}$ is geometrically reduced over $k$ for all $h\in\overline{\{x\}}\cap H$ \cite[\href{https://stacks.math.columbia.edu/tag/035W}{Tag 035W}]{stacks-project}. If $h\in\overline{\{x\}}\cap H$ is a generic point, $\oo_{\overline{\{x\}}\cap H, h}$ is a field and thus is identified with $\ka(h)$. Thus $\ka(h)$ is separable over $k$ by \cite[\href{https://stacks.math.columbia.edu/tag/030W}{Tag 030W}]{stacks-project}.
\end{proof}

The following is the main technical result of this note. For its analogue over finite fields, see Theorem~\ref{N1_Bertini_finite_field}.

\begin{theorem}\label{N1_Bertini_infinite_field}
Let $k$ be an infinite field, and let $X\subseteq\pp^N_k$ be a quasi-projective scheme of pure dimension $\ge2$ over $k$. If $X$ is geometrically~$N_1$ (resp.~$N_1$), then for every integer $d\ge1$ and a general hypersurface $H\subset\pp^N_k$ of degree $d$, $X\cap H$ is geometrically $N_1$ (resp.~$N_1$). 
\end{theorem}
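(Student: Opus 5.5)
We reduce to hyperplanes by replacing $\pp^N_k$ with its $d$-uple embedding, so assume $d=1$. Let $Z\subset X$ be the (reduced) closed complement of the smooth locus $X_{\sm}$ (resp.~regular locus $X_{\reg}$). Since $X$ is geometrically $N_1$, every codimension-one point of $X$ outside $X_{\sm}$ is a node. The generic points of $Z$ of codimension one in $X$ are therefore finitely many nodes $x_1,\dots,x_r$, and all other components of $Z$ have codimension $\ge2$. Write $N$ for the open set of points of $X$ at which $X$ is either smooth or has a node; $N$ contains $X_{\sm}$ and the generic points $x_i$.

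Our first goal is to show that each $x_i$ has an open neighbourhood on which every point of $\overline{\{x_i\}}$ is still ``étale-locally of the form $Y$''. By Theorem~\ref{Geometric_characterization_of_nodes}(B) there are pointed étale $k$-morphisms $(X,x_i)\leftarrow(V_i,v_i)\to(Y,y)$ with $Y=\Spec k[a,b,t_1,\dots,t_{n-1}]/(ab)$. Let $W_i\subset X$ be the open image of $V_i$. Over $W_i$ the closure of $x_i$ is the image of the preimage of the locus $\{a=b=0\}\subset Y$, and this locus is smooth over $k$.

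Now take a general hyperplane $H$. It must avoid all associated points of $X$, all generic points of $Z$, and all $x_i$; it must make $X_{\sm}\cap H$ smooth (Lemma~\ref{smooth_codim_Bertini_infinite} / Jouanolou); and it must make the generic points $h$ of each $\overline{\{x_i\}}\cap H$ have separable residue field (Lemma~\ref{separable_residue_field_Bertini_infinite_field}). Each condition holds on a nonempty open set, and $k$ is infinite, so such a $k$-rational $H$ exists. For this $H$, $X\cap H$ has pure dimension $n-1$, and each codimension-one point of $X\cap H$ either lies in $X_{\sm}\cap H$, which is smooth, or is a generic point $h$ of some $\overline{\{x_i\}}\cap H$. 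This uses the same dimension count as Lemma~\ref{divisor_Ra}: the components of $Z$ of codimension $\ge2$ meet $H$ in codimension $\ge2$ of $X\cap H$. Since $x_i$ is not in $H$, each such $h$ has codimension two in $X$ and lies in $W_i$ after shrinking.

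The main obstacle is showing that each such $h$ is a node of $X\cap H$. The plan is as follows. Pull $H$ back to $V_i$, giving a Cartier divisor $D=V_i\cap H$ with a point $w$ over $h$. Its image in $Y$ is a function $\ell$ on $V_i$, but not on $Y$, so one cannot directly restrict the roof. Instead we work with completed local rings. Since $\ka(h)/k$ is separable, Lemma~\ref{O_u_tensor_product}-type arguments identify $\widehat{\oo_{V_i,w}}$ with $\ka(w)[\![a,b,s]\!]/(ab)$, where $s$ is a lift of a uniformizer of $\overline{\{v_i\}}$ at $w$. This step needs $\overline{\{v_i\}}$ regular at $w$, which follows from the smoothness of $\{a=b=0\}$ together with étaleness. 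The generality of $H$, specifically the fact that $\overline{\{x_i\}}\cap H$ is reduced and generically smooth, is what makes $\ell$ restrict to a uniformizer of $\overline{\{v_i\}}$ at $w$; so we may choose $s=\ell$. Then
\[
\widehat{\oo_{D,w}}\cong\ka(w)[\![a,b,s]\!]/(ab,s)\cong\ka(w)[\![a,b]\!]/(ab).
\]
The isomorphism is of $k$-algebras because coefficient fields can be chosen compatibly, $\ka(w)$ being separable over $k$. Lemma~\ref{etale_nbd}(D)$\Rightarrow$(C) then shows that $h$ is a node of $X\cap H$.

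In the $N_1$ (non-geometric) case, the same argument runs with $X_{\reg}$ and Seidenberg's regularity Bertini in place of $X_{\sm}$ and Jouanolou.
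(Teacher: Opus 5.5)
Your proposal is correct, but it proves the key step by a different route than the paper.

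\textbf{How the paper shows $h$ is a node.} The paper builds a \emph{new} \'etale roof one dimension down, $(X\cap H,h)\leftarrow(H_{V'},h_v)\to(Y_i,y_i)$. First it splits $\Omega_{\overline{\{y\}}/k}\cong\bigoplus_i\Omega_{Y/Y_i}\otimes\oo_{\overline{\{y\}}}$. Using injectivity of $\m/\m^2\to\Omega\otimes\ka(h_v)$, which comes from separability, it shows that some $d_{V/Y_i}(g)$ is nonzero at $h_v$. It then slices the smooth relative-dimension-one map $\operatorname{pr}_i\circ\psi$ (Stacks Tag 057C). Next it checks that the image point is $y_i$ and not a smooth point. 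Finally it applies (B)$\Rightarrow$(A) of Theorem~\ref{Geometric_characterization_of_nodes}.

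\textbf{How your argument does it.} You stay at the codimension-two point $w$. Since $\ka(\psi(w))\subset\ka(w)$ is separable over $k$, the roof gives a $k$-linear coefficient field and $\widehat{\oo_{V_i,w}}\cong\ka(w)[\![a,b,s]\!]/(ab)$. Because $\ell$ restricts to a uniformizer of $\overline{\{v_i\}}$ at $w$, any lift $\tilde\ell$ makes $a,b,\tilde\ell$ a regular system of parameters. So $s\mapsto\tilde\ell$ is an automorphism that fixes $(ab)$ and the coefficient field, and Lemma~\ref{etale_nbd} (D)$\Rightarrow$(C) finishes.

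\textbf{Comparison.} Both routes use the same inputs: the roof at $x_i$, separability of $\ka(h)$, and $\ell$ generating the maximal ideal of $\overline{\{v_i\}}$ at $w$. Your route replaces the differential computation, the slicing lemma and the identification of the image point with a formal change of coordinates. The paper's route stays within the \'etale-roof formulation and uses only first-order information.

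\textbf{Three points to make explicit.}
\begin{enumerate}
\item ``$h$ lies in $W_i$ after shrinking'' cannot be arranged by shrinking $W_i$. You must add, as an open condition on $H$, that $H$ avoids the generic points of $\overline{\{x_i\}}\setminus W_i$ of dimension $n-2$, as the paper does. Otherwise $h$ can be a point where no roof exists. For example, take $t=\alpha u+\beta v$ through the pinch point of $u^2=tv^2$ (characteristic $\neq2$): the section $u^2=(\alpha u+\beta v)v^2$ is not nodal at the origin.
\item The precise version of ``the closure of $x_i$ is the image of the preimage of $\{a=b=0\}$'' is $\phi^{-1}(Z)=\psi^{-1}(\{a=b=0\})$. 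This set is \'etale over $\A^{n-1}_k$, hence smooth with pairwise disjoint components. So $w$ lies on a unique component, which replaces the paper's going-down step, and $\overline{\{x_i\}}\cap W_i$ is smooth over $k$.
\item Reducedness of $\overline{\{x_i\}}\cap H$ at $h$ is not in your list of conditions. It follows from the Jouanolou step in the proof of Lemma~\ref{separable_residue_field_Bertini_infinite_field}. More simply, Bertini smoothness applied to $\overline{\{x_i\}}\cap W_i$ gives reducedness and separability of $\ka(h)$ together.
\end{enumerate}
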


\begin{proof}
Each node $x$ of $X$ is a generic point of the singular locus $X_{\sing}$, so the set of nodes of $X$ is finite and the scheme $(X\setminus\bigcup_{x \text{ a node}}\overline{\{x\}})$ is geometrically $R_1$ (resp.~$R_1$). Recall that geometrically $R_1$ is equivalent to smooth in codimension one \cite[\href{https://stacks.math.columbia.edu/tag/038X}{Tag 038X}]{stacks-project}. Therefore Lemma~\ref{smooth_codim_Bertini_infinite} applies to $(X\setminus\bigcup_{x \text{ a node}}\overline{\{x\}})$, giving for each $d\ge1$ a dense open subscheme $U_d\subset\pp(H^0(\pp^N_k,\oo_{\pp^N_k}(d)))$ such that every $H\in U_d(k)$ has the property that $(X\setminus\bigcup_{x \text{ a node}}\overline{\{x\}})\cap H$ is geometrically $R_1$ (resp.~$R_1$).

Each codimension-one point of $X\cap H$ is either a codimension-one point of $(X\setminus\bigcup_{x \text{ a node}}\overline{\{x\}})\cap H$, or a codimension-zero point of $\overline{\{x\}}\cap H$ for some node $x\in X$. Therefore, it remains to show that for 
each node $x\in X$, a general hypersurface $H$ has the property that all generic points $h$ of $\overline{\{x\}}\cap H$ are nodes of $X\cap H$.

By Theorem~\ref{Geometric_characterization_of_nodes}, $x\in X$ is a node if and only if there exist pointed \'etale $k$-morphisms $(X,x)\overset{\phi}{\longleftarrow}(V,v)\overset{\psi}{\longrightarrow}(Y,y)$, where
\[
Y=\Spec k[a,b,t_1,\dots,t_n]/(ab),\quad y=(a,b)\in Y.
\]
For each $1\le i\le n$, put 
\[
Y_i=\Spec k[a,b,t_1,\dots,\widehat{t_i},\dots,t_n]/(ab),\quad y_i=(a,b)\in  Y_i
\]
so that $Y\cong Y_i\times_k\A^1_{t_i}$. Denote by $\operatorname{pr}_i\colon Y\to Y_i$ the projection. For a hypersurface $H\subset\pp^N_k$, set $H_X:=X\cap H$ and $H_V:=H_X\times_XV$. We want to find hypersurfaces $H$ such that each generic point $h$ of $\overline{\{x\}}\cap H$ admits the following diagram (where all arrows are pointed morphisms except for the hook arrows, which denote non-pointed closed embeddings): 
\[
\begin{tikzcd}
      & (V,v)\arrow[ld,"\phi"swap]\arrow[rd,"\psi"] & \\
      (X,x)& (H_V,h_v)\arrow[u,hook]\arrow[ld] & (Y,y)\arrow[d,"\operatorname{pr}_i"]\\
     (H_X,h)\arrow[u,hook] & &   (Y_i,y_i)
\end{tikzcd}
\] 
Here, for $H_X$ to be an effective Cartier divisor of $X$, $H$ needs to avoid all associated points of $X$. For $\dim \overline{\{x\}}\cap H=\dim \overline{\{x\}}-1$, $H$ needs to avoid all nodes of $X$. Moreover, to ensure the existence of a (necessarily codimension-one) point $h_v\in H_V$ mapping to $h\in H_X$, we need $h\in\phi(V)$, which can be achieved as follows. Since $\phi$ is \'etale, $\phi(V)\subseteq X$ is open, so $\overline{\{x\}}\setminus\phi(V)\subset\overline{\{x\}}$ is proper closed with all irreducible components having dimension $\le\dim\overline{\{x\}}-1$. For each $d\ge1$, the set of hypersurfaces of degree $d$ containing an associated point of $X$, a node $x$, or a generic point of $\overline{\{x\}}\setminus\phi(V)$ of dimension $\dim\overline{\{x\}}-1$ is a finite union of proper linear subspaces $L_{d,j}\subset\pp(H^0(\pp^N_k,\oo_{\pp^N_k}(d)))$. By choosing $H$ outside the finite union $\bigcup_j L_{d,j}$, we ensure that $H$ contains none of those points. Thus $\overline{\{x\}}\cap H$ has pure dimension $\dim \overline{\{x\}}-1$ and, if any such $h$ lies in $\overline{\{x\}}\setminus\phi(V)$, it must be a generic point of $\overline{\{x\}}\setminus\phi(V)$ of dimension $\dim\overline{\{x\}}-1$. Therefore, our choice of $H$ ensures that all such $h$ lie in $\phi(V)$. Let $h_v\in V$ be a point such that $\phi(h_v)=h$; then $h_v\in H_V$. Consequently, for each $H$ outside $\bigcup_j L_{d,j}$, each node $x\in X$, and each generic point $h$ of $\overline{\{x\}}\cap H$, there is a pointed \'etale $k$-morphism $(H_V,h_v)\to(H_X,h)$ giving the above diagram.

We claim that, for each $(H_X,h)$ with $(H_V,h_v)$ obtained as above, if $g\in\m_{V,h_v}\subset\oo_{V,h_v}$ is a local equation of $H_V$ at $h_v$, then its differential $d_{V/Y_i}(g)$ has nonzero image in $\Omega_{V/Y_i,h_v}\otimes_{\oo_{V,h_v}}\kappa(h_v)$ for some $i$, where $\Omega_{V/Y_i}$ is the sheaf of relative differentials. Granting the claim, we finish the proof by ``slicing $V\to Y_i$'': Applying \cite[\href{https://stacks.math.columbia.edu/tag/057C}{Tag 057C}]{stacks-project} to the smooth morphism of relative dimension 1 $\operatorname{pr}_i\circ\,\psi\colon V\to Y_i$ at $h_v$, we find an affine open neighborhood $V'\subset V$ of $h_v$ such that $H_{V'}:=H_V\cap V'\to Y_i$ is smooth of relative dimension 0. This yields pointed \'etale $k$-morphisms
\[
(H_X,h)\leftarrow(H_{V'},h_v)\to(Y_i,w)
\]
where $w:=(\operatorname{pr}_i\circ\,\psi)(h_v)$. We verify that $w=y_i=(a,b)\in Y_i$ is the node. Indeed, as \'etale morphisms preserve the dimensions of local rings, $w\in Y_i$ is a codimension-one point. Suppose $w$ is a smooth point. Then $h\in H_X$ is a smooth point as smoothness is \'etale-local; in particular $\oo_{H_X,h}$ is regular. Since $H$ avoids all associated points of $X$, its local equation $f\in\oo_{X,h}$ must be a nonzerodivisor, so $\oo_{H_X,h}\cong\oo_{X,h}/(f)$ implies $\oo_{X,h}$ is regular \cite[\href{https://stacks.math.columbia.edu/tag/00NU}{Tag 00NU}]{stacks-project}, which contradicts that $h\in\overline{\{x\}}\subset X_{\sing}$. Thus $w\in Y_i$ must be the singular codimension-one point $y_i=(a,b)$, making $h\in H_X$ a node by Theorem~\ref{Geometric_characterization_of_nodes}.

Before proving the claim, we remark that we may and will assume that $h_v\in\overline{\{v\}}$. In fact, since the \'etale morphism $\phi\colon V\to X$ maps $h_v$ to $h$ and $h\in\overline{\{x\}}$, by the going-down property \cite[\href{https://stacks.math.columbia.edu/tag/00HS}{Tag 00HS}]{stacks-project} there exists a point $\tilde{v}\in V$ mapping to $x$ such that $h_v\in\overline{\{\tilde{v}\}}$. Moreover, since $X\overset{\phi}{\leftarrow} V\overset{\psi}{\rightarrow} Y$ are \'etale and $x\in X_{\sing}$, $\psi(\tilde{v})$ must be the singular codimension-one point $y=(a,b)\in Y$. Thus replacing $v$ with $\tilde{v}$ justifies this assumption.

It remains to prove the claim. Intuitively, if all partial derivatives of $g$ at $h_v$ vanish, then $H_V$ must be tangent to $\overline{\{v\}}$ at $h_v$; the rest of the proof makes this precise by establishing the contrapositive. First, we choose $H$ such that it is not tangent to $\overline{\{x\}}$ for any node $x\in X$, as follows. Endowed with its reduced scheme structure, the closure $\overline{\{x\}}$ is $R_0$, and since $x$ is a node, $\ka(x)/k$ is separable. Thus Lemma~\ref{smooth_codim_Bertini_infinite} and Lemma~\ref{separable_residue_field_Bertini_infinite_field} yield for each $d\ge1$ dense open subschemes $R_{d,x}$ and $S_{d,x}$ of $\pp(H^0(\pp^N_k,\oo_{\pp^N_k}(d)))$ such that for every $H\in (R_{d,x}\cap S_{d,x})(k)$, $\overline{\{x\}}\cap H$ is $R_0$ and each of its generic points $h$ has $\ka(h)/k$ separable.

Next, we relate this non-tangency via $\phi\colon V\to X$ to the non-vanishing of the differential of $g$. Let $H$ be a $k$-point of $\bigcap_{x \text{ a node}}(R_{d,x}\cap S_{d,x})$. Since $\overline{\{x\}}\cap H$ is $R_0$, for each of its generic points $h$, $\oo_{\overline{\{x\}}\cap H, h}$ is a field. Let $f\in \oo_{X,h}$ be a local equation of $H_X$ at $h$ and $\ol f\in \oo_{\overline{\{x\}}, h}$ its image. Since $\oo_{\overline{\{x\}}, h}/(\ol f)=\oo_{\overline{\{x\}}\cap H, h}$ is a field, $\ol f$ generates the maximal ideal $\m_{\overline{\{x\}},h}$. Let $g:=\phi^*f\in\oo_
{V,h_v}$ which is a local equation of $H_V$ at $h_v$. Let $\ol g\in \oo_{\overline{\{v\}}, h_v}$ be its image, where $\overline{\{v\}}$ is endowed with its reduced scheme structure. Since the natural morphisms $\overline{\{v\}}\hookrightarrow V\times_X\overline{\{x\}}$ and $V\times_X\overline{\{x\}}\to \overline{\{x\}}$ are unramified, so is their composition $\phi|_{\overline{\{v\}}}\colon\overline{\{v\}}\to\overline{\{x\}}$. Hence $\m_{\overline{\{v\}},h_v}=\m_{\overline{\{x\}},h}\oo_{\overline{\{v\}},h_v}$, which is generated by $\ol g=\phi^*\ol f$. In particular, its class $[\ol g]\in \m_{\overline{\{v\}},h_v}/\m^2_{\overline{\{v\}},h_v}$ is nonzero. By construction $\ka(h_v)/\ka(h)$ and $\ka(h)/k$ are separable, so $\ka(h_v)/k$ is separable, and hence \cite[\href{https://stacks.math.columbia.edu/tag/00TU}{Tag 00TU}]{stacks-project} applies to $\oo_{\overline{\{v\}},h_v}$ and yields that the natural differential map $\m_{\overline{\{v\}},h_v}/\m^2_{\overline{\{v\}},h_v}\to\Omega_{\overline{\{v\}}/k,h_v}\otimes_{\oo_{\overline{\{v\}},h_v}}\ka(h_v)$ is injective. Thus, $[\ol g]$ has nonzero image in $\Omega_{\overline{\{v\}}/k,h_v}\otimes_{\oo_{\overline{\{v\}},h_v}}\ka(h_v)$.

\smallskip
Finally, this non-vanishing yields the claimed nonzero image of $d_{V/Y_i}(g)$. Since $\psi\colon(V,v)\to(Y,y)$ is \'etale, its base change $V\times_Y\overline{\{y\}}\to\overline{\{y\}}$ is \'etale. Because $\overline{\{y\}}\cong\A^n_k$ is regular, $V\times_Y\overline{\{y\}}$ is regular \cite[\href{https://stacks.math.columbia.edu/tag/025N}{Tag 025N}]{stacks-project}. In particular, the irreducible components of $V\times_Y\overline{\{y\}}$ are disjoint and open; since $\overline{\{v\}}$ is one such component, the restriction $\psi|_{\overline{\{v\}}}\colon\overline{\{v\}}\to\overline{\{y\}}$ is \'etale. As $\psi$ and $\psi|_{\overline{\{v\}}}$ are \'etale, we find $\psi^*\Omega_{Y/Y_i}\cong\Omega_{V/Y_i}$ and $\psi^*\Omega_{\overline{\{y\}}/k}\cong\Omega_{\overline{\{v\}}/k}$, respectively. Now let $B=k[a,b,t_1,\dots,t_n]/(ab)$ and $B_i=k[a,b,t_1,\dots,\widehat{t_i},\dots,t_n]/(ab)$. Setting $I=(a,b)B$ and $C=B/I$, there is a canonical exact sequence of $C$-modules
\[
I/I^2\overset{\delta}{\to}\Omega_{B/k}\otimes_BC\to\Omega_{C/k}\to0
\]
where the image of $\delta$ is $C(da\otimes1)\oplus C(db\otimes1)$. On the other hand, the natural map $\Omega_{B/k}\twoheadrightarrow\Omega_{B/B_i}$ sends $dt_i$ to $dt_i$ and sends $da$, $db$, and $dt_j$ for $j\ne i$ to $0$. The induced map $\Omega_{B/k}\twoheadrightarrow\bigoplus_{i=1}^n\Omega_{B/B_i}$ tensored with $C$ is a surjection with kernel the image of $\delta$, hence induces a canonical isomorphism
\[
\Omega_{C/k}\cong\bigoplus_{i=1}^n\left(\Omega_{B/B_i}\otimes_{B}C\right).
\]  
This induces a canonical isomorphism
\[
\Omega_{\overline{\{y\}}/k}\cong\bigoplus_{i=1}^n\left(\Omega_{Y/Y_i}\otimes_{\oo_Y}\oo_{\overline{\{y\}}}\right).
\]  
Applying $\psi^*$ and using the canonical isomorphisms $\psi^*\Omega_{\overline{\{y\}}/k}\cong\Omega_{\overline{\{v\}}/k}$ and $\psi^*\Omega_{Y/Y_i}\cong\Omega_{V/Y_i}$, we obtain 
\[
\Omega_{\overline{\{v\}}/k}\cong\bigoplus_{i=1}^n\left(\Omega_{V/Y_i}\otimes_{\oo_V}\oo_{\overline{\{v\}}}\right).
\] 
Localizing at $h_v$ and tensoring with $\kappa(h_v)$ over $\oo_{\overline{\{v\}},h_v}$ gives  
\begin{equation}\label{Omega}
\Omega_{\overline{\{v\}}/k,h_v}\otimes_{\oo_{\overline{\{v\}},h_v}}\kappa(h_v)\cong\bigoplus_{i=1}^n\left(\Omega_{V/Y_i,h_v}\otimes_{\oo_{V,h_v}}\ka(h_v)\right).
\end{equation} 
Since this canonical isomorphism is compatible with the differential maps, the image of $g\in\oo_{V,h_v}$ via 
\[
\oo_{V,h_v}\to\oo_{\overline{\{v\}},h_v}\to\Omega_{\overline{\{v\}}/k,h_v}\to\Omega_{\overline{\{v\}}/k,h_v}\otimes_{\oo_{\overline{\{v\}},h_v}}\kappa(h_v)
\]
on the left-hand side of (\ref{Omega}) being nonzero implies that its image via
\[
\oo_{V,h_v}\to\Omega_{V/Y_i,h_v}\to\Omega_{V/Y_i,h_v}\otimes_{\oo_{V,h_v}}\kappa(h_v)
\]
must be nonzero in at least one summand of the right-hand side of (\ref{Omega}). This proves the claim.

In sum, for each $d\ge1$, the subscheme of $\pp(H^0(\pp^N_k,\oo_{\pp^N_k}(d)))$ 
\[
\left(U_d\cap\bigcap_{x \text{ a node}}(R_{d,x}\cap S_{d,x})\right)\setminus\bigcup_jL_{d,j}
\]
is a dense open subscheme, each of whose $k$-points $H$ has the property that $X\cap H$ is geometrically $N_1$ (resp.~$N_1$). This completes the proof.
\end{proof}

\subsection{Finite fields}

The following lemma is a key technical ingredient for the proof of Theorem~\ref{N1_Bertini_finite_field}. The underlying finite-field Bertini techniques were originally developed in \cite{Poo04} and \cite{Wut14}; our proof adapts the framework of \cite{GK23}.

\begin{lemma}\label{finite_field_Bertini_R1_except_codimension-one}
Let $k$ be a finite field, and let $W$ be a finite set of points of $\pp^N_k$. Let $X\subseteq\pp^N_k$ be a quasi-projective scheme of pure dimension $\ge2$ over $k$. Assume $X$ is $R_1$ except possibly at a finite set of codimension-one points $x_1,\dots,x_s\in X$. Then for all sufficiently large integers $d$, there exists $f\in H^0(\pp^N_k,\oo_{\pp^N_k}(d))$ defining a hypersurface $H_f$ that satisfies

\begin{enumerate}
    \item $W\cap H_f=\varnothing$.
    \item $X_{\sm}\cap H_f$ is smooth. 
    \item $\overline{\{x_i\}}_{\sm}\cap H_f$ is $R_0$ for all $i=1,\dots,s$, where $\overline{\{x_i\}}_{\sm}$ denotes the smooth locus of the closure $\overline{\{x_i\}}$ (with the reduced scheme structure) of $x_i$.
\end{enumerate}
\end{lemma}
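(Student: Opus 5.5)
The plan is Poonen's closed-point sieve, in the form of Gabber--Kelly's treatment \cite{GK23}. We work in the probabilistic language of \cite{Poo04}. For a set $\mathcal P$ of homogeneous polynomials, its density is the limit as $d\to\infty$ of $\#(\mathcal P\cap H^0(\pp^N_k,\oo(d)))/\#H^0(\pp^N_k,\oo(d))$. It suffices to show that each of conditions (1), (2), (3) holds on a set of density $1$, or on a set whose density is bounded below by a positive product. Then the set of $f$ satisfying all three has positive density, and in particular is nonempty for all $d\gg0$. Better still, we impose all three conditions simultaneously as local conditions in a single sieve and obtain one positive lower bound.

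Condition (1) is a local condition at the finitely many points of $W$. Replacing each point of $W$ by a closed point in its closure, or noting that $f$ avoids a point iff $f$ does not vanish at it, this amounts to requiring that $f$ not vanish on finitely many closed subschemes. For $d$ large, the restriction $H^0(\oo(d))\to\bigoplus_{w}\kappa(w)$ is surjective, with $w$ running over suitable closed points of the closures. Hence the density of (1) is $\prod_w(1-\#\kappa(w)^{-1})>0$.

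Condition (2) is Poonen's Bertini smoothness theorem \cite[Thm.~1.1]{Poo04}, applied to the smooth quasi-projective scheme $X_{\sm}$, together with its version with finitely many prescribed local conditions \cite[Thm.~1.2]{Poo04}. This gives density $\zeta_{X_{\sm}}(\dim X+1)^{-1}$ times the local factors from (1).

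Condition (3) is the main point. Each $Z_i:=\overline{\{x_i\}}_{\sm}$ is smooth quasi-projective of dimension $\dim X-1\ge1$. We need $Z_i\cap H_f$ to be $R_0$, meaning it is generically smooth at each generic point, or equivalently has no component along which it is singular. We would not require $Z_i\cap H_f$ to be smooth, which has positive but non-one density. Instead we apply the \emph{singular-locus dimension} version of Poonen's theorem, namely Wutz's \cite{Wut14} or the $R_a$ form in \cite{GK23}: for a smooth quasi-projective $Z$ of dimension $m$, the set of $f$ with $\dim (Z\cap H_f)_{\sing}\le m-2$, i.e.\ $Z\cap H_f$ regular (hence smooth) in codimension zero, has density $1$. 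This is because the low-degree sieve condition that forces singularities only along a positive-dimensional set fails with probability tending to $0$, while the medium- and high-degree error estimates of \cite{Poo04} apply unchanged. Intersecting finitely many density-one sets with the positive-density set from (1)--(2) still has positive density, which proves the lemma. Note that $\dim Z_i\ge1$ is needed so that $Z_i\cap H_f$ is nonempty-or-empty of dimension $\ge0$ with $R_0$ meaningful; an empty intersection is vacuously fine.

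The hard step will be justifying (3) at density one rather than just positive density, and making it compatible with the other conditions in a single sieve. If a ready-made citation is unavailable, I would rerun Poonen's three-range argument for $Z_i$. The low range is handled by counting points of degree $\le r$ only on a finite set, which contributes a factor tending to $1$ when we only forbid singular components. The medium and high ranges are bounded exactly as in \cite[Lem.~2.4, 2.6]{Poo04}, applied to $Z_i$ of dimension $m$, with the stronger bound needed only for points in an $(m-1)$-dimensional singular locus, which is the approach of \cite{GK23}.
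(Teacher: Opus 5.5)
Your argument has a genuine gap in step (3) when $\dim X=2$, a case the lemma allows. You claim that for smooth $Z$ of dimension $m$, the condition $\dim(Z\cap H_f)_{\sing}\le m-2$ holds with density $1$. This is false for $m=1$.

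In that case $Z_i=\overline{\{x_i\}}_{\sm}$ is a smooth curve and $Z_i\cap H_f$ is zero-dimensional. So $R_0$ is the same as smoothness, which by \cite{Poo04} has density $\zeta_{Z_i}(2)^{-1}<1$. In your sieve language, the low-degree factor is $\prod_{P\in Z_i,\,\deg P\le r}(1-q^{-2\deg P})$ with $q=\#k$, and this does not tend to $1$: a single singular closed point is already a singular component.

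For $m\ge2$ your claim is correct. A singular locus of dimension $\ge m-1\ge1$ has closed points of unbounded degree, so it lies in Poonen's medium- or high-degree bad sets, whose upper density tends to $0$. Intersecting with the positive-density set from (1)--(2) is then legitimate, and this even avoids any excision. So your argument works as written when $\dim X\ge3$.

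For surfaces, however, you cannot combine separately obtained positive-density sets. You need one sieve that imposes, simultaneously, nonvanishing at finitely many closed points and smoothness of $H_f$ on $X_{\sm}$ and on every curve $Z_i$. You must also handle the fact that the closures $\overline{\{x_i\}}$ may meet one another, so the strata need not be disjoint. Your proposal mentions a single sieve but never sets it up.

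The paper does exactly this:
\begin{itemize}
\item It puts $F_i=\overline{\{x_i\}}\cap\bigcup_{j\ne i}\overline{\{x_j\}}$ and splits $W\cup\{\text{generic points of the }F_i\}$ into closed points $W_1$ and non-closed points $W_2$.
\item It applies \cite[Prop.~4.8]{GK23} to the pairwise disjoint smooth strata $U_0=X_{\sm}\setminus W_1$ and $U_i=\overline{\{x_i\}}_{\sm}\setminus(F_i\cup W_1)$. This gives positive density of $f$ that are nonvanishing on $W_1$ with every $U_i\cap H_f$ smooth.
\item Avoiding each point of $W_2$ has density $1$ \cite[Lem.~4.2]{GK23}.
\item Then $\overline{\{x_i\}}_{\sm}\cap H_f$ is smooth away from $F_i\cap H_f$. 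That set has smaller dimension because $H_f$ misses the generic points of $F_i$, so $\overline{\{x_i\}}_{\sm}\cap H_f$ is $R_0$.
\end{itemize}
Demanding smoothness of $U_i\cap H_f$, rather than only $R_0$, is what makes the paper's argument uniform in $\dim X$.
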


\begin{proof}
This follows the strategy of the proof of \cite[Thm.~4.5]{GK23}, using the technical inputs from \cite[Prop.~4.8]{GK23} to our specific setup. For each $i=1,\dots,s$, put $F_i:=\overline{\{x_i\}}\cap(\bigcup_{j\ne i}\overline{\{x_j\}})$. Denote by $\{\eta_j\}_j$ the finite set of generic points of all $F_1,\dots,F_s$. Express the finite set $W\cup\{\eta_j\}_j=W_1\sqcup W_2$, where $W_1$ (resp.~$W_2$) consists of the closed (resp.~non-closed) points of $\pp^N_k$ lying in $W\cup\{\eta_j\}_j$. Write $W_1=\{w_1,\dots,w_r\}$ and define $T:=\prod_{i=1}^r\ka(w_i)^{\times}\subset H^0(W_1,\oo_{W_1})$. Set
\[
U_0=X_{\sm}\setminus W_1,\quad U_i=\overline{\{x_i\}}_{\sm}\setminus(F_i\cup W_1),\,\, i=1,\dots,s.
\]
Since $x_i\in X\setminus X_{\sm}$, $X_{\sm}$ is disjoint from $\overline{\{x_i\}}$. Thus $U_0,U_1,\dots,U_s$ are smooth, pure-dimensional, pairwise disjoint subschemes of $\pp^N_k$ with $W_1\cap U_i=\varnothing$ for all $i$. Denote by $S_{\operatorname{homog}}$ the graded ring of homogeneous polynomials of $\pp^N_k$. Applying \cite[Prop. 4.8]{GK23} with their $Y=W_1$ and $Z=\varnothing$ yields that the set $\mathcal{P}_0$ of all $f\in S_{\operatorname{homog}}$ such that $f|_{W_1}\in T$ and $U_i\cap H_f$ is smooth for all $i=0,1,\dots, s$ has positive density $\mu(\mathcal{P}_0)>0$. 

For any point $x\in\pp^N_k$, we put $\mathcal{P}_x=\{f\in S_{\operatorname{homog}}\mid x\not\in H_f\}$; then define $\mathcal{P}=(\bigcap_{x\in W_2}\mathcal{P}_x)\cap\mathcal{P}_0$. Since each $x\in W_2$ is non-closed, $\mu(\mathcal{P}_x)=1$ by \cite[Lem.~4.2]{GK23}. It then follows from \cite[Lem.~4.1(2)]{GK23} that $\mu(\mathcal{P})=\mu(\mathcal{P}_0)>0$. This positive density implies (cf. \cite[Lem.~4.1(1)]{GK23}) that for all sufficiently large integers $d$ there exists $f\in H^0(\pp^N_k,\oo_{\pp^N_k}(d))$ which defines a hypersurface $H_f$ satisfying $(a)$ $f|_{W_1}\in T$, $(b)$ $x\not\in H_f$ for all $x\in W_2$, and $(c)$ $U_i\cap H_f$ is smooth for all $i=0,1,\dots, s$. It remains to show that $H_f$ satisfies all items~$(1), (2),$ and $(3)$ of the lemma.

Item $(a)$ implies $W_1\cap H_f=\varnothing$, and item~$(b)$ implies $W_2\cap H_f=\varnothing$. Thus $(W\cup \{\eta_j\}_j)\cap H_f=\varnothing$; in particular $H_f$ satisfies item~$(1)$. By item~$(c)$, $U_0\cap H_f=(X_{\sm}\cap H_f)\setminus (W_1\cap H_f)$ is smooth. Since $W_1\cap H_f=\varnothing$, $X_{\sm}\cap H_f$ is smooth. Thus $H_f$ verifies item~$(2)$. Likewise, by item~$(c)$, $\overline{\{x_i\}}_{\sm}\cap H_f$ is smooth everywhere except possibly along $(F_i\cup W_1)\cap H_f=F_i\cap H_f$. As $H_f$ avoids all generic points $\eta_j$ of $F_1,\dots,F_s$, $\dim(F_i\cap H_f)<\dim(\overline{\{x_i\}}_{\sm}\cap H_f)$. Thus $\overline{\{x_i\}}_{\sm}\cap H_f$ is $R_0$ and $H_f$ satisfies item~$(3)$. 
\end{proof}

With this technical lemma, we obtain the following finite-field analogue of Theorem~\ref{N1_Bertini_infinite_field}. We omit the word ``geometrically'' over perfect fields.

\begin{theorem}\label{N1_Bertini_finite_field}
Let $k$ be a finite field, and let $X\subseteq\pp^N_k$ be a quasi-projective scheme of pure dimension $\ge2$ over $k$. If $X$ is $N_1$ (resp.~$R_1$), then for all sufficiently large integers $d$, there exists $f\in H^0(\pp^N_k,\oo_{\pp^N_k}(d))$ defining a hypersurface $H_f$ such that $X\cap H_f$ is $N_1$ (resp.~$R_1$).
\end{theorem}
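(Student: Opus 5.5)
The plan is to run the proof of Theorem~\ref{N1_Bertini_infinite_field} with $k$ finite, producing the single good hypersurface from Lemma~\ref{finite_field_Bertini_R1_except_codimension-one} instead of a dense open set. Since $k$ is perfect, regular codimension-one points are geometrically regular, and every residue field is separable over $k$. Hence $N_1$ is the same as geometrically $N_1$, and $R_1$ is the same as smooth in codimension one.

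\textbf{Setup.} As in the infinite case, the nodes of $X$ are generic points of $X_{\sing}$ of codimension one, so they are finitely many; call them $x_1,\dots,x_s$. In the $R_1$ case set $s=0$. Apart from these points, $X$ is $R_1$, so Lemma~\ref{finite_field_Bertini_R1_except_codimension-one} applies. For each node $x_i$, fix an \'etale roof $(X,x_i)\leftarrow(V_i,v_i)\to(Y,y)$ given by Theorem~\ref{Geometric_characterization_of_nodes}.

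Let $W$ be the following finite set of points:
\begin{itemize}
\item the associated points of $X$;
\item the generic points of $X_{\sing}$ of codimension $\ge 2$ in $X$, i.e. of dimension $\le n-2$;
\item the nodes $x_i$;
\item the generic points of $\overline{\{x_i\}}\setminus\phi_i(V_i)$ of dimension $\dim\overline{\{x_i\}}-1$;
\item the generic points of $\overline{\{x_i\}}\setminus\overline{\{x_i\}}_{\sm}$ of dimension $\dim\overline{\{x_i\}}-1$.
\end{itemize}
The last set is finite because $\overline{\{x_i\}}$ is reduced with separable function field, so $\overline{\{x_i\}}_{\sm}$ is dense. For $d\gg0$, Lemma~\ref{finite_field_Bertini_R1_except_codimension-one} gives $f$ such that $H=H_f$ avoids $W$, $X_{\sm}\cap H$ is smooth, and each $\overline{\{x_i\}}_{\sm}\cap H$ is $R_0$.

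\textbf{Codimension-one points of $X\cap H$.} Since $H$ avoids the associated points, $H_X=X\cap H$ is a Cartier divisor of pure dimension $n-1$. Let $h$ be a codimension-one point of $H_X$. There are two cases.
\begin{itemize}
\item If $h\notin X_{\sing}$, then $h\in X_{\sm}\cap H$, which is smooth, so $h$ is a regular point.
\item If $h\in X_{\sing}$, then $\overline{\{h\}}$ has dimension $n-2$. Since $H$ contains no generic point of $X_{\sing}$ of dimension $\le n-2$, the point $h$ must be a generic point of $\overline{\{x_i\}}\cap H$ for some node $x_i$.
\end{itemize}
This replaces Lemma~\ref{smooth_codim_Bertini_infinite}; it is essentially Lemma~\ref{divisor_Ra}, applied away from the nodes.

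\textbf{Generic points over a node.} Let $h$ be a generic point of $\overline{\{x_i\}}\cap H$. Because $H$ avoids the relevant generic points in $W$, we get $h\in\phi_i(V_i)$ and $h\in\overline{\{x_i\}}_{\sm}$. Since $\overline{\{x_i\}}_{\sm}\cap H$ is $R_0$, the local ring $\oo_{\overline{\{x_i\}}\cap H,h}$ is a field. Also $\kappa(h)/k$ is separable, automatically, because $k$ is perfect. These are exactly the inputs that $R_{d,x}$ and $S_{d,x}$ supplied in the proof of Theorem~\ref{N1_Bertini_infinite_field}. From here the rest of that proof applies verbatim: choose $h_v\in\overline{\{v\}}$ by going-down, show that $\ol g$ generates $\m_{\overline{\{v\}},h_v}$, conclude via the splitting $(\ref{Omega})$ that $d_{V/Y_i}(g)$ is nonzero for some $i$, and slice to obtain an \'etale roof $(H_X,h)\leftarrow(H_{V'},h_v)\to(Y_i,y_i)$. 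Theorem~\ref{Geometric_characterization_of_nodes} then shows that $h$ is a node of $H_X$.

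\textbf{Main obstacle.} The delicate step is ensuring that the tangency control holds at every generic point $h$ of $\overline{\{x_i\}}\cap H$. Lemma~\ref{finite_field_Bertini_R1_except_codimension-one} controls only the smooth loci $\overline{\{x_i\}}_{\sm}$ and excludes the overlaps $F_i$. So I must check that putting the generic points of the bad loci into $W$ makes every such $h$ a smooth point of $\overline{\{x_i\}}$ lying in $\phi_i(V_i)$.

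That check is a dimension count. Each bad locus is a proper closed subset of $\overline{\{x_i\}}$. If $H$ avoids all its generic points of dimension $\dim\overline{\{x_i\}}-1$, then its intersection with $H$ has dimension $<\dim\overline{\{x_i\}}-1$. Such an intersection cannot contain a generic point of $\overline{\{x_i\}}\cap H$, which has dimension exactly $\dim\overline{\{x_i\}}-1$.
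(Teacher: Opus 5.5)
Your proposal is correct and follows essentially the same route as the paper. You assemble a finite set $W$ of bad points (associated points, the nodes $x_i$, the codimension-two generic points of the singular locus away from the nodes, and the relevant codimension-one generic points of $\overline{\{x_i\}}\setminus\phi_i(V_i)$ and of the non-smooth locus of $\overline{\{x_i\}}$), apply Lemma~\ref{finite_field_Bertini_R1_except_codimension-one}, and rerun the tangency-and-slicing argument of Theorem~\ref{N1_Bertini_infinite_field}. The differences are only cosmetic: you do the dimension count of Lemma~\ref{divisor_Ra} by hand instead of citing Corollary~\ref{Bertini_Ra}, and you check that $\overline{\{x_i\}}\cap H$ is $R_0$ only at its generic points, which is all the later argument uses.
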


\begin{proof}
The proof of the infinite-field case (Theorem~\ref{N1_Bertini_infinite_field}) shows that for any $k$-point $H$ of the dense open subscheme \mbox{$\left(U_d\cap\bigcap_{x \text{ a node}}(R_{d,x}\cap S_{d,x})\right)\setminus\bigcup_jL_{d,j}$}, $X\cap H$ is geometrically $N_1$. Since this implication is valid over any field, the proof for the finite-field case reduces to proving the existence of a $k$-point $H_f$ satisfying the corresponding conditions. Specifically, let $x_1,\dots,x_s$ be the nodes of $X$. Equip the closures $\overline{\{x_i\}}$ with the reduced scheme structures; in particular, each $\overline{\{x_i\}}$ is $R_0$. We require $H_f$ to satisfy the following four properties: 

\begin{enumerate}
    \item[$(A)$] $(X\setminus\bigcup_{i=1}^s\overline{\{x_i\}})\cap H_f$ is $R_1$. 
    \item[$(B)$] $\overline{\{x_i\}}\cap H_f$ is $R_0$ for all $i=1,\dots,s$.
    \item[$(C)$] Every generic point $h$ of $\overline{\{x_i\}}\cap H_f$ has residue field $\ka(h)$ separable over $k$ for all $i=1,\dots,s$.
    \item[$(D)$] $H_f$ contains no associated point of $X$, no node $x_i$, and no generic point of $\overline{\{x_i\}}\setminus\phi_i(V_i)$ of dimension $\dim\overline{\{x_i\}}-1$ for all $i=1,\dots,s$, where $\phi_i\colon (V_i,v_i)\to(X,x_i)$ is the \'etale morphism coming from Theorem~\ref{Geometric_characterization_of_nodes}.
\end{enumerate}

To construct such an $H_f$, we define a finite set $W\subset\pp^N_k$ of ``bad'' points that $H_f$ needs to avoid. Let $W$ be the union of the following:

\begin{enumerate}
\item[$(\alpha)$] Associated points of $X$, nodes $x_i$, and generic points of $\overline{\{x_i\}}\setminus\phi_i(V_i)$ of dimension $\dim\overline{\{x_i\}}-1$ for all $i=1,\dots,s$.
\item[$(\beta)$] Associated points of $(X\setminus\bigcup_{i=1}^s\overline{\{x_i\}})$ and generic points of $(X\setminus\bigcup_{i=1}^s\overline{\{x_i\}})_{\sing}$ of codimension 2 in $(X\setminus\bigcup_{i=1}^s\overline{\{x_i\}})$.
\item[$(\gamma)$] Associated points of $\overline{\{x_i\}}$ and generic points of $\overline{\{x_i\}}_{\sing}$ of codimension 1 in $\overline{\{x_i\}}$, for all $i=1,\dots,s$. 
\end{enumerate}

Applying Lemma~\ref{finite_field_Bertini_R1_except_codimension-one} to $W$ shows that for all sufficiently large integers $d$, there exists $f\in H^0(\pp^N_k,\oo_{\pp^N_k}(d))$ defining a hypersurface $H_f$ that satisfies

\begin{enumerate}
    \item[$(1)$] $W\cap H_f=\varnothing$.
    \item[$(2)$] $X_{\reg}\cap H_f$ is regular. 
    \item[$(3)$] $\overline{\{x_i\}}_{\reg}\cap H_f$ is $R_0$ for all $i=1,\dots,s$.
\end{enumerate} 
Here, the regular and smooth loci coincide since $k$ is perfect. It remains to show that such an $H_f$ satisfies all properties $(A)$ through $(D)$; the argument from the proof of Theorem~\ref{N1_Bertini_infinite_field} then applies to show that $X\cap H_f$ is $N_1$.

First, since $k$ is perfect, every field extension of $k$ is separable over $k$. Thus $H_f$ satisfies property $(C)$.

Next, item~$(1)$ implies that $H_f$ avoids all points in items~$(\alpha)$ through $(\gamma)$. In particular, item~$(\alpha)$ implies that $H_f$ satisfies property $(D)$.

Finally, we apply Corollary~\ref{Bertini_Ra} to verify properties $(A)$ and $(B)$. Since the scheme $(X\setminus\bigcup_{i=1}^s\overline{\{x_i\}})$ is $R_1$, $H_f$ avoids the points in item~$(\beta)$, and $(X\setminus\bigcup_{i=1}^s\overline{\{x_i\}})_{\reg}\cap H_f=X_{\reg}\cap H_f$ is $R_1$ by item~$(2)$, we find that $(X\setminus\bigcup_{i=1}^s\overline{\{x_i\}})\cap H_f$ is $R_1$. In particular, if $X$ is $R_1$, then $X\cap H_f$ is $R_1$. Likewise, since $\overline{\{x_i\}}$ is $R_0$, items~$(\gamma)$ and $(3)$ imply that $\overline{\{x_i\}}\cap H_f$ is $R_0$. Thus $H_f$ satisfies properties $(A)$ and $(B)$. This completes the proof.
\end{proof}

\section{Geometric \texorpdfstring{$N_1$}{N1} reduction}

This section defines the notion of geometric $N_1$ reduction (Definition~\ref{def_geo_N1_reduction}) and proves its Bertini theorem, Theorem~\ref{geo_N1_reduction_Bertini_DVR}.

Recall that $S$ denotes a Dedekind scheme with fraction field $K$, and let $X_K$ be a projective, geometrically normal, and geometrically connected scheme over $K$. In particular, $X_K$ is geometrically integral.

\begin{definition}\label{def_model}
A \emph{model} of $X_K$ over $S$ is a flat projective scheme $X$ over $S$ whose generic fibre is isomorphic to $X_K$ over $K$.
\end{definition}

\begin{remark}
For any model $X$ of $X_K$ over $S$ (Definition~\ref{def_model}), the fibre $X_s$ over any closed point $s\in S$ is of pure dimension $\dim X_K$ \cite[Cor.~4.3.14]{Liu02} and geometrically connected \cite[\href{https://stacks.math.columbia.edu/tag/0AYI}{Tag 0AYI}]{stacks-project}.
\end{remark}

\begin{definition}\label{def_geo_N1_reduction}
We say that $X_K$ has \emph{geometrically $N_1$ (resp.~geometrically $R_1$, $N_1$, $R_1$) reduction} at a closed point $s\in S$ if there exists a model $X$ of $X_K$ over the DVR $\oo_{S,s}$ whose special fibre $X_s$ is geometrically $N_1$ (resp.~geometrically $R_1$, $N_1$, $R_1$).
\end{definition}

The properties of our generic fibre $X_K$ are preserved under taking general hypersurface sections provided $\dim X_K\ge2$ (see Corollary~\ref{geo_conn_and_normal}).

\begin{lemma}\label{geo_normal_Bertini}
Let $K$ be an infinite field. Let $X_K\subseteq\pp^N_K$ be a projective geometrically normal scheme over $K$. Then for every integer $d\ge1$, a general hypersurface $H_K\subset\pp^N_K$ of degree $d$ has the property that $X_K\cap H_K$ is geometrically normal.
\end{lemma}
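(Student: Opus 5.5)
We reduce to Serre's criterion: a Noetherian scheme is normal if and only if it is $R_1$ and $S_2$. Geometric normality of $X_K\cap H_K$ is equivalent to normality of $(X_K\cap H_K)\times_K\ol K$. So it suffices to show that $H_K\cap X_K$ is geometrically $R_1$ and geometrically $S_2$ for general $H_K$. By replacing $\pp^N_K$ with its $d$-uple embedding, we may assume $d=1$ throughout, exactly as in the proof of Lemma~\ref{smooth_codim_Bertini_infinite}.

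\emph{Step 1 ($R_1$).} Since $X_K$ is geometrically normal, it is geometrically $R_1$, i.e. smooth in codimension one over $K$. We also know that $X_K$ is geometrically integral, hence of pure dimension. Lemma~\ref{smooth_codim_Bertini_infinite} with $a=1$ then gives a nonempty open set $U_1$ of hyperplanes $H_K$ for which $X_K\cap H_K$ is smooth in codimension one, i.e. geometrically $R_1$.

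\emph{Step 2 ($S_2$).} Let $X'=X_K\times_K\ol K$, which is normal and hence $S_2$. For a hyperplane $H_K$ that avoids the generic point of $X_K$, the section $X_K\cap H_K$ is an effective Cartier divisor cut out by a nonzerodivisor $f$. At a point $z\in X'\cap H$ we then have
\[
\operatorname{depth}\oo_{X'\cap H,z}=\operatorname{depth}\oo_{X',z}-1
\quad\text{and}\quad
\dim\oo_{X'\cap H,z}=\dim\oo_{X',z}-1.
\]
Hence $S_2$ for the section can fail only at points $z$ where $\operatorname{depth}\oo_{X',z}=2<\min(3,\dim\oo_{X',z})$, i.e. at points where $X'$ is not $S_3$.

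To control these points, consider the non-$S_3$ locus of $X_K$. It is closed because $X_K$ is excellent, and it is compatible with flat base change to $\ol K$. Take its finitely many generic points together with the associated points of $X_K$ (here just the generic point), and require $H_K$ to avoid all of them. The hyperplanes failing this form a finite union of proper linear subspaces.

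We then claim that $X_K\cap H_K$ is $S_2$ at every point. At a point $z$ where $X'$ is $S_3$, the section is $S_2$. At a point $z$ in the non-$S_3$ locus, $z$ is not a generic point of that locus, because $H_K$ avoids those generic points. So $z$ specializes from a generic point $w$ of that locus with $\dim\oo_{X',z}\ge\dim\oo_{X',w}+1$. Since $X'$ is $S_2$, its non-$S_3$ points have $\dim\oo\ge3$, so $\dim\oo_{X',z}\ge4$. Then
\[
\operatorname{depth}\oo_{X'\cap H,z}\ge\operatorname{depth}\oo_{X',z}-1\ge 2-1\cdot 0\ge 2,
\]
using only that $\operatorname{depth}\oo_{X',z}\ge3$ there.

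This last depth bound is the main obstacle. The condition $\dim\oo_{X',z}\ge4$ alone does not give $\operatorname{depth}\oo_{X',z}\ge3$, so the bound is not automatic. I would instead invoke the standard Bertini result for $S_k$ (e.g. Flenner, or \cite[Cor.~6.11]{Jou83}-style statements for Serre properties), or argue via local cohomology: for general $H$, the element $f$ is a nonzerodivisor on the finitely many relevant $\operatorname{Ext}$ or local cohomology modules. That argument shows $\operatorname{depth}$ drops by exactly one while codimension also drops by one, at all points outside a set avoided by a general $H$.

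With that in hand, we intersect the open set from Step 1 with the complement of the finitely many proper linear subspaces from Step 2. This yields a nonempty open set of $H_K$, which has $K$-points since $K$ is infinite. For every such $H_K$, the section $X_K\cap H_K$ is geometrically $R_1$ and $S_2$, hence geometrically normal.
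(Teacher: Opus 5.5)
\textbf{There is a genuine gap in Step~2.} Your Step~1 is fine. Splitting geometric normality into geometric $R_1$ plus $S_2$ is also a sound plan: geometric $R_1$ is smoothness in codimension one, handled by Lemma~\ref{smooth_codim_Bertini_infinite}, and $S_2$ is insensitive to extension of the ground field. The problem is that Step~2, as written, does not prove that $X_K\cap H_K$ is $S_2$.

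You correctly reduce to showing $\operatorname{depth}\oo_{X',z}\ge\min(3,\dim\oo_{X',z})$ at every $z\in X'\cap H$. Avoiding the generic points of the non-$S_3$ locus does not give this. For a non-generic point $z$ of that locus with generization $w$, the only general relation is $\operatorname{depth}\oo_{X',z}\le\operatorname{depth}\oo_{X',w}+\dim\oo_{\overline{\{w\}},z}$, which is an upper bound. Nothing in your argument prevents $H$ from containing a smaller irreducible subset of the non-$S_3$ locus along which the depth is still $2$; at its generic point the section then fails $S_2$. Your displayed chain, starting from $\operatorname{depth}\oo_{X',z}\ge2$, only yields $\operatorname{depth}\oo_{X'\cap H,z}\ge1$. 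A precise citation of a Bertini theorem for $S_2$ over an arbitrary infinite field would be acceptable. But deferring to an unnamed ``standard result'' or an unspecified local-cohomology argument leaves exactly the nontrivial content unproved.

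\textbf{What is missing is a uniform form of the depth increase.} On a scheme of finite type over a field, the function $\Phi(z):=\operatorname{depth}\oo_{X,z}+\dim\overline{\{z\}}$ is lower semicontinuous; that is, each $Z_m:=\{z:\Phi(z)\le m\}$ is closed. This follows from local duality: the stalk at $z$ of the dualizing complex $\omega^\bullet$ of $X$ over $K$, shifted by $\dim\overline{\{z\}}$, is normalized, so $Z_m$ is the union of the supports of the coherent sheaves $\mathcal{H}^{-j}(\omega^\bullet)$ with $j\le m$.

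With this, your strategy closes for $X=X_K$, pure of dimension $n$ and $S_2$:
\begin{enumerate}
\item A point $w$ with $\dim\oo_{X,w}\le1$ has $\Phi(w)=n$. So for $m\le n-1$, every $w\in Z_m$ has depth $\ge2$, hence $\dim Z_m\le m-2$.
\item Require $H$ to avoid the associated points of $X$ and the finitely many generic points of $Z_2,\dots,Z_{n-1}$.
\item The section can then fail $S_2$ at $z$ only if $\operatorname{depth}\oo_{X,z}=2<\dim\oo_{X,z}$. In that case $z\in Z_m\cap H$ with $m=\dim\overline{\{z\}}+2\le n-1$.
\item This forces $\dim\overline{\{z\}}\le m-3$, a contradiction.
\end{enumerate}
Alternatively, you can use the incidence scheme $I$. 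It is a projective bundle over $X_K$, hence $S_2$. Its generic fibre over $(\pp^N_K)^\vee$ is a localization of $I$, hence $S_2$. On the dense open set where the projection is flat, the locus of $S_2$ fibres of this proper flat family is open \cite{EGAIV3}.

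\textbf{Comparison with the paper.} Once repaired, your route genuinely differs from the paper's. The paper treats normality as a whole: it applies the normality Bertini theorem of \cite{FOV99} over the perfect closure, then descends using generic flatness and the openness of geometrically normal fibres. Your split avoids the perfect closure entirely, because geometric $R_1$ is a smoothness condition and $S_2$ is automatically geometric.
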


\begin{proof}
By replacing $\pp^N_K$ with its $d$-uple embedding, we may assume $d=1$. Let $X_K\subseteq\pp^N_K=\operatorname{Proj}K[x_0, \dots, x_N]$. Then $\pp(H^0(\pp^N_K,\oo_{\pp^N_K}(1)))$ is the dual projective space $(\pp^N_K)^{\vee}=\operatorname{Proj}K[a_0, \dots, a_N]$. The universal hyperplane $\mathcal{H}\subset(\pp^N_K)^{\vee}\times_K\pp^N_K$ is defined by the ideal generated by $\sum_{i=0}^Na_ix_i$. Consider the incidence scheme $I$ as the closed subscheme of $(\pp^N_K)^{\vee}\times_K\pp^N_K$ 
\[
I:=\mathcal{H}\cap((\pp^N_K)^{\vee}\times_KX_K).
\]

Let $\pi\colon I\to(\pp^N_K)^{\vee}$ denote the first projection. Since $I\subset(\pp^N_K)^{\vee}\times_K X_K$ is closed and $X_K$ is proper over $K$, the morphism $\pi$ is proper. Any $K$-point $h$ of $(\pp^N_K)^{\vee}$ corresponds to a hyperplane $H_h\subset\pp^N_K$, and the fibre $I_h:=\pi^{-1}(h)$ is canonically identified with $X_K\cap H_h$. By generic flatness (\cite[\href{https://stacks.math.columbia.edu/tag/052A}{Tag 052A}]{stacks-project}), there exists a dense open subscheme $U\subset(\pp^N_K)^{\vee}$ such that the base change $\pi_U\colon I_U=I\times_{(\pp^N_K)^{\vee}}U\to U$ is flat and of finite presentation. Thus by \cite[Thm.~12.2.4(iv)]{EGAIV3}, the set 
\[
N=\{u\in U\,|\,\text{$I_u$ is geometrically normal over }\kappa(u)\}
\]
is open in $U$, and hence an open subscheme of $(\pp^N_K)^{\vee}$. Every $K$-point $h$ of $N$ corresponds to a hyperplane $H_h$ such that $X_K\cap H_h$ is geometrically normal. Thus, it remains to show that $N$ is nonempty.

Let $L:=K^{\operatorname{perf}}$ denote the perfect closure of $K$. Since $X_K$ is geometrically normal, its base change $X_L$ is normal. By \cite[Cor.~3.4.14]{FOV99}, there exists a dense open subscheme $V\subset(\pp^N_L)^{\vee}$ such that every $H'\in V(L)$ has $X_L\cap H'$ a normal scheme. Since $L$ is a perfect field, $X_L\cap H'$ is geometrically normal over $L$ \cite[\href{https://stacks.math.columbia.edu/tag/038O}{Tag 038O}]{stacks-project}.

Let $U_L$ denote the base change of $U$ and let $g\colon U_L\to U$ be the projection. Since $L$ is infinite, $(V\cap U_L)(L)$ is nonempty. Choose any $\alpha\in (V\cap U_L)(L)$, let $\beta=g(\alpha)\in U(L)$, and let $a,b$ be their image points in $U_L, U$ respectively. Then, denoting $I_L$ the base change of $I$, the fibre $(I_L)_a\cong X_L\cap H'_a$ is geometrically normal over $L$. Moreover, $(I_L)_a\cong I_b\times_{\kappa(b)}L$. By \cite[\href{https://stacks.math.columbia.edu/tag/038P}{Tag 038P}]{stacks-project}, $(I_L)_a$ is geometrically normal over $L$ if and only if $I_b$ is geometrically normal over $\kappa(b)$. Thus $\beta\in N(L)$. Hence $N$ is nonempty. This completes the proof.
\end{proof}

\begin{corollary}\label{geo_conn_and_normal} 
Assume $K$ is an infinite field and $\dim X_K\ge2$. If $X_K\subseteq\pp^N_K$ is a projective, geometrically normal, and geometrically connected scheme over $K$, then for every integer $d\ge1$ and a general hypersurface $H_K\subset\pp^N_K$ of degree $d$, $X_K\cap H_K$ has the same properties.
\end{corollary}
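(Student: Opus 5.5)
By Lemma~\ref{geo_normal_Bertini}, there is a nonempty open subscheme $N_d\subset\pp(H^0(\pp^N_K,\oo_{\pp^N_K}(d)))$ whose $K$-points $H_K$ give geometrically normal sections $X_K\cap H_K$. It therefore suffices to produce a second nonempty open $C_d$ whose $K$-points give geometrically connected sections, and then to pick a $K$-point of $N_d\cap C_d$. This is possible because $K$ is infinite and the intersection is nonempty open in an irreducible projective space.

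For connectedness I would use the standard cohomological argument, which works for every hypersurface not containing the generic point of $X_K$, so $C_d$ can be taken to be the complement of the proper linear subspace of hypersurfaces containing that point. Since $X_K$ is integral, $H_K$ is then an effective Cartier divisor, and there is an exact sequence
\[
0\to\oo_{X_K}(-d)\to\oo_{X_K}\to\oo_{X_K\cap H_K}\to0.
\]
Base change to $\ol K$ is harmless since cohomology commutes with flat base change, so we may assume $K$ algebraically closed and $X_K$ normal and integral of dimension $n\ge2$. It then suffices to show $H^1(X_K,\oo_{X_K}(-d))=0$ for all $d\ge1$. This gives $H^0(\oo_{X_K\cap H_K})=H^0(\oo_{X_K})=K$, so the section is connected, hence geometrically connected.

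The main obstacle is this vanishing, which is an Enriques--Severi--Zariski type statement for normal varieties in arbitrary characteristic. For $d\gg0$ it follows from Serre duality together with depth $\ge2$ (normality gives $S_2$, and $n\ge2$). For small $d$ this route is unavailable in positive characteristic, so instead I would argue directly and avoid the vanishing altogether. The hyperplane $H$ is a $d$-uple hyperplane section, and connectedness of every hypersurface section of an irreducible projective variety of dimension $\ge2$ is classical: Hartshorne, Cor.~III.7.9, gives it for ample divisors on normal projective varieties via Zariski's connectedness theorem. Applied over $\ol K$ to the normal integral $X_{\ol K}$, this yields geometric connectedness for every hypersurface not containing the generic point. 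Combined with the geometric normality from Lemma~\ref{geo_normal_Bertini} and the purity of dimension $n-1$, this gives the statement.
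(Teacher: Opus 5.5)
Your proof is correct, but it handles connectedness by a different route from the paper. The paper first reduces to $d=1$ by the $d$-uple embedding. It then observes that $X_K$ is geometrically irreducible (being normal and connected) and applies Jouanolou's Bertini theorem \cite[Cor.~6.11(3)]{Jou83}. This gives a dense open set $W$ of hyperplanes for which $X_K\cap H_K$ is geometrically irreducible, and the paper intersects $W$ with the open set $N$ of Lemma~\ref{geo_normal_Bertini}, just as you intersect $N_d$ with $C_d$.

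You instead get geometric connectedness for \emph{every} hypersurface not containing the generic point of $X_K$. You do this by applying \cite[Cor.~III.7.9]{Har77} to the effective ample Cartier divisor $X_{\ol K}\cap H_{\ol K}$ on the normal projective variety $X_{\ol K}$. Hartshorne's proof of that corollary applies the Enriques--Severi--Zariski vanishing to the multiples $mD$, which have the same support as $D$. That trick is exactly what removes the small-$d$ problem you raised, so you can delete from your write-up the preliminary reduction to $H^1(X_K,\oo_{X_K}(-d))=0$ for all $d\ge1$.

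Each route has its advantages. Yours gives an explicit connectedness locus (the complement of a single linear subspace) and needs no Bertini irreducibility theorem, but it uses normality of $X_{\ol K}$ essentially. The paper's route yields geometric irreducibility directly and would work for any geometrically irreducible $X_K$ of dimension $\ge2$, but only for general $H_K$. In the present setting you lose nothing, since a normal connected scheme is irreducible.
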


\begin{proof}
By replacing $\pp^N_K$ with its $d$-uple embedding, we may assume $d=1$. Since a normal connected scheme is irreducible \cite[\href{https://stacks.math.columbia.edu/tag/033M}{Tag 033M}]{stacks-project}, $X_K$ is geometrically irreducible over $K$. Since $\dim X_K\ge2$, \cite[Cor.~6.11(3)]{Jou83} yields a dense open subscheme $W\subset (\pp^N_K)^{\vee}$ such that every $H_K\in W(K)$ has $X_K\cap H_K$ geometrically irreducible over $K$. Let $N\subset(\pp^N_K)^{\vee}$ be the nonempty open subscheme as in Lemma~\ref{geo_normal_Bertini}. The intersection $W\cap N$ is a nonempty open subscheme of $(\pp^N_K)^{\vee}$ whose $K$-points have the asserted properties. 
\end{proof}

\begin{remark}[Specialization map]\label{Rem_specialization_map}
Let $R$ be a DVR with fraction field $K$ and residue field $k$, and let $N\ge1$ be an integer. The specialization map 
\[
\operatorname{sp}\colon\pp^N_K(K)\to\pp^N_k(k)
\]
sends a $K$-point $x$ to the restriction of its closure $\overline{\{x\}}$ in $\pp^N_R$ to the special fibre $\pp^N_k$. It is well-defined and surjective. For more details, see for instance \cite[\S~6.1]{GK23}.
\end{remark}

\begin{lemma}[{\cite[Lem.~6.2]{GK23}}]\label{infinite_intersection}
For any point $x\in\pp^N_k(k)$ and any nonempty open subscheme $U\subseteq\pp^N_K$, the intersection $\operatorname{sp}^{-1}(x)\cap U(K)$ is infinite.
\end{lemma}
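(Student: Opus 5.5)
The statement is \cite[Lem.~6.2]{GK23}: for $x\in\pp^N_k(k)$ and a nonempty open $U\subseteq\pp^N_K$, the set $\operatorname{sp}^{-1}(x)\cap U(K)$ is infinite. I would argue by coordinates and reduce to the statement that a nonzero polynomial over $K$ cannot vanish on a residue polydisc. Let $R$ be the DVR with uniformizer $\pi$ and maximal ideal $\m$. After a $\mathrm{PGL}_{N+1}(R)$ change of coordinates (which commutes with $\operatorname{sp}$ and carries opens to opens), I may assume $x=[1:0:\dots:0]$. Then $\operatorname{sp}^{-1}(x)$ contains the set $D:=\{[1:a_1:\dots:a_N] : a_i\in\m\}$. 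Indeed, the closure of such a $K$-point in $\pp^N_R$ is the section given by the unimodular vector $(1,a_1,\dots,a_N)$, and its reduction is $[1:0:\dots:0]$.

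Shrinking $U$, I may assume $U=D_+(F)$ for a nonzero homogeneous $F\in K[x_0,\dots,x_N]$, since every nonempty open contains a basic open. Set $f(y_1,\dots,y_N):=F(1,y_1,\dots,y_N)$. Then $f$ is nonzero as a polynomial: otherwise $x_0$ would divide $F$ to full degree, which is impossible because $F\neq0$ is homogeneous. Hence $\operatorname{sp}^{-1}(x)\cap U(K)$ contains $\{a\in\m^N : f(a)\neq0\}$, and it suffices to show this set is infinite.

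The core step is a Combinatorial-Nullstellensatz-type fact. A nonzero polynomial in $N$ variables over the field $K$ cannot vanish on $A_1\times\dots\times A_N$ whenever each $A_i\subset K$ is infinite, by induction on $N$ through the standard argument of viewing $f$ as a polynomial in $y_N$. Apply this with $A_i=\m$, which is infinite because it contains $\pi^j$ for all $j\ge1$, and these are distinct. This produces one point $a\in\m^N$ with $f(a)\neq0$.

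To get infinitely many points, suppose only finitely many $a^{(1)},\dots,a^{(m)}$ existed. I would then choose a nonzero linear form $\ell$ vanishing at... Simpler: replace $f$ by $f\cdot\prod_j (y_1-a^{(j)}_1)$, or by a coordinate that separates the points. Applying the nonvanishing result again gives a new point, which is a contradiction. Alternatively one can fix $a_2,\dots,a_N$ with $f(y_1,a_2,\dots)\not\equiv0$; this one-variable polynomial has finitely many roots, so infinitely many $a_1\in\m$ work.

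I expect no serious obstacle. The only care needed is checking that $\operatorname{sp}$ is compatible with the $R$-linear coordinate change and that the points in $D$ really specialize to $x$. Both follow from the description of $\operatorname{sp}$ via unimodular representatives in Remark~\ref{Rem_specialization_map}.
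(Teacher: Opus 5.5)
The paper gives no proof of this lemma; it cites \cite[Lem.~6.2]{GK23}, so there is no in-paper argument to compare your route against. Your argument is correct and supplies what the paper outsources. Since $R$ is local, $\mathrm{GL}_{N+1}(R)\to\mathrm{GL}_{N+1}(k)$ is surjective, so you may take $x=[1:0:\dots:0]$. Normalizing a $K$-point to a unimodular vector then shows that $\operatorname{sp}^{-1}(x)$ is exactly the residue polydisc $\{[1:a] : a\in\m^N\}$. The lemma therefore reduces to the Zariski density of $\m^N$ in $\A^N_K$, which your induction on $N$ proves. The only inputs are that $\m$ is infinite and that a nonzero one-variable polynomial over a field has finitely many roots.

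A few things to tidy up:
\begin{itemize}
\item In the last step, delete the abandoned sentence about a linear form $\ell$. Either of the two arguments you then give works on its own.
\item For the second of those arguments, say why some $(a_2,\dots,a_N)\in\m^{N-1}$ with $f(y_1,a_2,\dots,a_N)\not\equiv0$ exists: apply the core step to a nonzero coefficient of $f$ viewed as a polynomial in $y_1$. For $N=1$ nothing is needed.
\item Your justification that $f\neq0$ is garbled as written. The clean reason is the identity $F=x_0^{\deg F}f(x_1/x_0,\dots,x_N/x_0)$ in $K[x_0^{\pm1},x_1,\dots,x_N]$.
\item State explicitly that distinct $a\in\m^N$ give distinct $K$-points $[1:a]$ of $D_+(F)\subseteq U$. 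This is what turns infinitely many such $a$ into an infinite subset of $\operatorname{sp}^{-1}(x)\cap U(K)$.
\end{itemize}
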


The main result of this section is the following. 

\begin{theorem}\label{geo_N1_reduction_Bertini_DVR}
Let $S$ be a Dedekind scheme with fraction field $K$ and a closed point $s\in S$. Let $X_K$ be a projective, geometrically normal, and geometrically connected scheme over $K$. Assume $\dim X_K\ge2$. If $X_K$ has geometrically $N_1$ (resp.~geometrically $R_1$, $N_1$, $R_1$) reduction at $s\in S$, then there exists a closed embedding $X_K\subseteq\pp^N_K$ such that for all sufficiently large integers $d$, there exist infinitely many hypersurfaces $H_K\subset\pp^N_K$ of degree $d$ such that $X_K\cap H_K$ has the same reduction type at $s\in S$.
\end{theorem}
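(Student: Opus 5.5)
We pass to the DVR $R:=\oo_{S,s}$, with fraction field $K$ and residue field $k=\ka(s)$, and fix a model $X\to\Spec R$ whose special fibre $X_s$ is geometrically $N_1$ (resp.~geometrically $R_1$, $N_1$, $R_1$). Since $X$ is projective over $R$, we fix a closed embedding $X\subseteq\pp^N_R$. Its generic fibre is the required embedding $X_K\subseteq\pp^N_K$, and its special fibre is an embedding $X_s\subseteq\pp^N_k$. By the remark following Definition~\ref{def_model}, $X_s$ has pure dimension $\dim X_K\ge2$. The idea is to cut the whole model by a hypersurface $\mathcal H\subset\pp^N_R$ whose special fibre is a good hypersurface section of $X_s$ and whose generic fibre is a good section of $X_K$. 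The scheme $X\cap\mathcal H$ should then be a model of $X_K\cap H_K$ with the desired special fibre.

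\textbf{Choosing the special fibre.} Fix $d$ large. If $k$ is infinite, Theorem~\ref{N1_Bertini_infinite_field} provides a dense open set of degree $d$ hypersurfaces $\bar H\subset\pp^N_k$ with $X_s\cap\bar H$ geometrically $N_1$ (resp.~$N_1$). In the geometrically $R_1$ case we use Lemma~\ref{smooth_codim_Bertini_infinite} with $a=1$, which is legitimate because geometrically $R_1$ is the same as smooth in codimension one. If $k$ is finite, all four notions reduce to $N_1$ or $R_1$ because $k$ is perfect, and Theorem~\ref{N1_Bertini_finite_field} gives such an $\bar H$ for all $d\gg0$. This finite-field case is why the statement only asks for sufficiently large $d$. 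In both cases we may also require $\bar H$ to avoid the finitely many associated points of $X_s$; this is built into those proofs, since the sets $L_{d,j}$ and $W$ already contain these points. Then $X_s\cap\bar H$ is an effective Cartier divisor on $X_s$, and in particular it has pure dimension $\dim X_K-1$.

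\textbf{Lifting.} Next we view $\bar H$ as a $k$-point of $\pp(H^0(\pp^N_k,\oo(d)))$. Corollary~\ref{geo_conn_and_normal} gives a nonempty open subset $U\subseteq\pp(H^0(\pp^N_K,\oo(d)))$ whose $K$-points $H_K$ have $X_K\cap H_K$ projective, geometrically normal and geometrically connected. Note that $K$ is infinite, being the fraction field of a DVR. Lemma~\ref{infinite_intersection} then yields infinitely many $H_K\in U(K)$ with $\operatorname{sp}(H_K)=\bar H$. Concretely, each such $H_K$ is given by a primitive form $F\in R[x_0,\dots,x_N]_d$ whose reduction $\bar F$ defines $\bar H$. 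Let $\mathcal H=V(F)\subset\pp^N_R$ and put $Y:=X\cap\mathcal H$. Then $Y_K=X_K\cap H_K$ and $Y_s=X_s\cap\bar H$, so the special fibre of $Y$ has the required reduction type.

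\textbf{Main obstacle: $Y$ is a model.} It remains to show that $Y$ is flat over $R$; projectivity is clear. I would argue as follows. $X$ is $R$-flat, and $\bar F$ is a nonzerodivisor on $X_s$ because $\bar H$ avoids the associated points of $X_s$. The local flatness criterion (the slicing criterion, e.g.\ Stacks Tag 00MF / 046Z) then implies that $F$ is a nonzerodivisor on $X$ and that $X/(F)$ is flat over $R$.

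One could worry that $Y$ is not the flat closure of its generic fibre, but flatness settles this: every associated point of $Y$ lies on $Y_K$, so $Y$ equals the closure of $Y_K$ and hence is a model of $X_K\cap H_K$. Finally, distinct $H_K$ give distinct hypersurfaces, and there are infinitely many of them, which proves the theorem.
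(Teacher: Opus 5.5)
Your proposal is correct and follows essentially the same route as the paper: fix a closed embedding of the model into $\pp^N_R$, use the Bertini theorems to pick a special-fibre hypersurface avoiding the associated points of $X_s$, lift it via Lemma~\ref{infinite_intersection} to infinitely many $H_K$ in the open set from Corollary~\ref{geo_conn_and_normal}, and deduce flatness of $X\cap\mathcal H$ from the slicing criterion. One small omission: you do not state how to handle the plain $R_1$ case over an infinite residue field, but the ``regular'' version of Lemma~\ref{smooth_codim_Bertini_infinite} covers it in exactly the same way.
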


\begin{proof}
Let $R$ denote the DVR $\oo_{S,s}$ with residue field $k$ and fraction field $K$. By assumption there exists a flat projective morphism $X\to\Spec R$ with generic fibre $X_K$ and special fibre $X_k$ which is geometrically $N_1$ (resp.~geometrically $R_1$, $N_1$, $R_1$). Fix a closed embedding $X\subseteq\pp^N_R$; this induces embeddings $X_K\subseteq\pp^N_K$ and $X_k\subseteq\pp^N_k$. For any integer $d\ge1$, define $\pp_d:=\pp(H^0(\pp^N_R, \oo_{\pp^N_R}(d)))$. The specialization map $(\pp_d)_K(K)\to(\pp_d)_k(k)$ is well-defined and surjective (cf. Remark~\ref{Rem_specialization_map}).

By Corollary~\ref{geo_conn_and_normal}, for each integer $d\ge1$ there exists a dense open subscheme $U_d\subset(\pp_d)_K$ such that every $H_K\in U_d(K)$ has $X_K\cap H_K$ projective, geometrically normal, and geometrically connected.

By Theorems \ref{N1_Bertini_infinite_field}, \ref{N1_Bertini_finite_field}, and Lemma~\ref{smooth_codim_Bertini_infinite}, there exists $d_0\gg0$ such that for all integers $d\ge d_0$, there exists a hypersurface $H_k\in(\pp_d)_k(k)$ for which $X_k\cap H_k$ is geometrically $N_1$ (resp.~geometrically $R_1$, $N_1$, $R_1$). Note that by its construction, $H_k$ does not contain any associated point of $X_k$.

For a fixed degree $d\ge d_0$, let $H_k\in(\pp_d)_k(k)$ be a hypersurface chosen as above. The set $V_d(K):=\operatorname{sp}^{-1}(H_k)\cap U_d(K)\subset(\pp_d)_K(K)$ is an infinite set by Lemma~\ref{infinite_intersection}. Pick any $H_K\in V_d(K)$, and let $H\subset\pp^N_R$ denote its schematic closure. Then $H$ has generic fibre $H_K$ and, by the definition of the specialization map, special fibre the chosen $H_k$. 

We show that the closed subscheme $X\cap H\subset X$ is flat over $R$. Since $X$ is flat over $R$, $\oo_{X,x}$ is flat over $R$ for any $x\in X$. For points $x\in (X\cap H)_K$, $\oo_{X\cap H, x}$ is a $K$-algebra and $K$ is flat over $R$, so flatness over $R$ follows. Consider $x\in (X\cap H)_k$. Let $\pi$ be a uniformizer of $R$, and let $f\in\oo_{X,x}$ be a local equation of $H$. Since $H_k$ avoids the associated points of $X_k$, the image of $f$ in $\oo_{X,x}/(\pi)=\oo_{X_k,x}$ is a nonzerodivisor. By the criterion for flatness \cite[\href{https://stacks.math.columbia.edu/tag/00MF}{Tag 00MF}]{stacks-project}, $\oo_{X\cap H, x}=\oo_{X,x}/(f)$ is flat over $R$.

Therefore $X\cap H$ is flat projective over $R$ and satisfies:

1. Its generic fibre is isomorphic to $X_K\cap H_K$, which is projective, geometrically normal, and geometrically connected; and

2. Its special fibre is isomorphic to $X_k\cap H_k$, which is geometrically $N_1$ (resp.~geometrically $R_1$, $N_1$, $R_1$). 

Thus $X_K\cap H_K$ has geometrically $N_1$ (resp.~geometrically $R_1$, $N_1$, $R_1$) reduction at $s\in S$.
\end{proof}

\section{Proof of the main result}

We first prove the base case.

\begin{lemma}\label{curve_N1_implies_semi-abelian}
Let $S$ be a Dedekind scheme with fraction field $K$. Let $X_K$ be a 1-dimensional, projective, geometrically normal, and geometrically connected scheme over $K$. If $X_K$ has geometrically $N_1$ (resp.~geometrically $R_1$) reduction at $s\in S$, then its Jacobian $\Pic^0_{X_K/K}=(\Pic^0_{X_K/K})_{\re}$ has semi-abelian (resp.~good) reduction at $s\in S$.    
\end{lemma}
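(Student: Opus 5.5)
The plan is to reduce to the classical statement that a curve with a semistable model has a Jacobian with semi-abelian reduction (\cite[Thm.~2.4]{DM69}, or more generally \cite[Cor.~9.7/2]{BLR90}). I first record what the hypotheses give. Since $X_K$ is $1$-dimensional and geometrically normal, it is geometrically regular, hence smooth over $K$. Being smooth, projective and geometrically connected, $\Pic^0_{X_K/K}$ is already an abelian variety, so it equals $(\Pic^0_{X_K/K})_{\re}$.

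Let $R=\oo_{S,s}$ with residue field $k$, and let $X\to\Spec R$ be a model whose special fibre $X_k$ is geometrically $N_1$ (resp.~geometrically $R_1$). By the remark following Definition~\ref{def_model}, $X_k$ is a projective curve of pure dimension $1$ and is geometrically connected. Its codimension-one points are exactly its closed points. Each closed point is either geometrically regular or a node; in both cases the local ring is reduced, by Lemma~\ref{stacks_node_is_node} for nodes. Every point of $X_k$ generalizes a closed point, so $X_k$ is reduced. After base change to $\ol k$, Lemma~\ref{geo_N1_implies_base_change_N1} shows that $X_{\ol k}$ is $N_1$. Thus $X_{\ol k}$ is reduced and its only singularities are ordinary double points; by Lemma~\ref{stacks_node_is_node} these are nodes in the sense of the Stacks Project. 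In the $R_1$ case, $X_k$ is smooth over $k$.

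Next I will show that $X$ is normal. The codimension-one points of $X$ are the generic points of $X_k$, together with points of the regular scheme $X_K$. At a generic point of $X_k$ the local ring is one-dimensional and $\pi$ is a nonzerodivisor by flatness. Its quotient by $\pi$ is a field because $X_k$ is reduced, so the local ring is a DVR. Serre's $S_2$ at closed points follows because $\pi$ is a nonzerodivisor and $X_k$ is reduced, hence $S_1$. Therefore $X$ is a normal, flat, projective $R$-curve whose special fibre is geometrically reduced with only ordinary double points, that is, a semistable curve in the sense of \cite[\S 9.2]{BLR90}. In the $R_1$ case, flatness together with smoothness of the special fibre makes $X$ smooth over $R$ near $X_k$.

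Now I apply the known results. In the good-reduction case, $X$ is smooth and proper over $R$ with geometrically connected fibres. Then $\Pic^0_{X/R}$ is an abelian scheme, which is the N\'eron model of its generic fibre, so $\Pic^0_{X_K/K}$ has good reduction. In the semi-abelian case, \cite[Cor.~9.7/2]{BLR90} applies directly to a proper flat curve with geometrically reduced special fibre whose singularities are ordinary double points. Alternatively, after the harmless étale base change $R\to R^{\sh}$, which preserves N\'eron models, I would resolve the $A_n$-singularities of $X$ at the nodes by chains of $\pp^1$'s to get a regular semistable model. Raynaud's theorem \cite[Thm.~9.5/4]{BLR90} then identifies $\mathscr N^0_k$ with $\Pic^0$ of a reduced nodal curve, which is an extension of an abelian variety by a torus.

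The step I expect to require the most care is verifying that the hypotheses of \cite[Cor.~9.7/2]{BLR90} are met exactly. This means checking that our geometric-$N_1$ notion matches BLR's ordinary double points over $\ol k$, via Lemma~\ref{geo_N1_implies_base_change_N1} and \cite[Tag 0C4D]{stacks-project}, and that no regularity of $X$ is needed beyond the normality established above.
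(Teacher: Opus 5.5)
Your proposal is correct and follows essentially the same route as the paper. Both pass to $X_{\ol k}$ via Lemma~\ref{geo_N1_implies_base_change_N1} to see that $X$ is a semi-stable (resp.\ smooth) curve in the sense of \cite[Def.~9.2/6]{BLR90}, establish that $X$ is normal, and invoke \cite[Cor.~9.7/2]{BLR90}. The differences are cosmetic: you verify normality directly via Serre's criterion where the paper cites \cite[Lem.~4.1.18]{Liu02}, and in the good-reduction case you use that $\Pic^0_{X/R}$ is an abelian scheme (hence the N\'eron model) rather than reading it off from the identification $\mathscr{N}^0\cong\Pic^0_{X/R}$.
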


\begin{proof}
Since $\dim X_K=1$, geometric normality implies smoothness; thus $\Pic^0_{X_K/K}$ is an abelian variety \cite[Prop.~9.2/3]{BLR90}.

Let $R$ denote the DVR $\oo_{S,s}$ with residue field $k$. Let $X\to\Spec R$ be a model of $X_K$ over $R$ such that its special fibre $X_k$ is geometrically $N_1$ (resp.~geometrically $R_1$). Then the geometric fibre $X_{\ol k}:=X_k\times_k \ol k$ is $N_1$ by Lemma \ref{geo_N1_implies_base_change_N1}. Thus $X\to\Spec R$ is a semi-stable (resp.~smooth) curve in the sense of \cite[Def.~9.2/6]{BLR90}; that is, all its geometric fibres are reduced, connected, one-dimensional, and have at worst nodes as singularities.

Since $X\to\Spec R$ is flat, $X_K$ is normal, and $X_k$ is reduced, the total space $X$ is normal (cf. \cite[Lem.~4.1.18]{Liu02}). Thus it follows from \cite[Cor.~9.7/2]{BLR90} that $\Pic^0_{X_K/K}$ has semi-abelian reduction at $s\in S$. Moreover, since the identity component $\mathscr{N}^0$ of the N\'eron model $\mathscr{N}$ of $\Pic^0_{X_K/K}$ is isomorphic to $\Pic^0_{X/R}$, if $X\to\Spec R$ is smooth then $\mathscr{N}^0_k\cong\Pic^0_{X_k/k}$ is an abelian variety, i.e. $\Pic^0_{X_K/K}$ has good reduction at $s\in S$.
\end{proof}

For the higher dimensional case, we make two remarks regarding the Albanese variety and the Picard variety.

\begin{remark}\label{existence_of_Albanese}
By a theorem of Serre \cite[Thm.~7]{Ser60}, extended in \cite[Thm.~A.1]{Wit08}, any geometrically reduced and geometrically connected scheme $X_K$ of finite type over a field $K$ admits an Albanese variety $\Alb_{X_K/K}$. See  \cite[Thm.~2.2 and \S~2.1]{ACMV25} for more details.
\end{remark}

\begin{remark}\label{Picard_variety_dual_to_Albanese}
Let $X_K$ be a proper, geometrically normal, and geometrically connected scheme over a field $K$. Then $\Pic^0_{X_K/K}$ is proper over $K$ \cite[Thm.~VI.2.1(ii)]{Gro62}, and its reduced subscheme $(\Pic^0_{X_K/K})_{\re}$, known as the Picard variety, is a group scheme \cite[Prop.~VI.3.1]{Gro62} and hence an abelian variety. Moreover, $(\Pic^0_{X_K/K})_{\re}$ is canonically dual to $\Alb_{X_K/K}$ over $K$ \cite[Thm.~VI.3.3(iii)]{Gro62}. See \cite[Rem.~3.3]{ACMV25} for more details. 
\end{remark}

We now prove the main result of this note.

\begin{proof}[Proof of Theorem~\ref{Main_result}]

We proceed by induction on $\dim X_K$. The base case $\dim X_K=1$ is Lemma~\ref{curve_N1_implies_semi-abelian}.

Suppose $\dim X_K\ge2$. Since $X_K$ has  geometrically $N_1$ (resp.~ geometrically $R_1$) reduction at $s\in S$, by Theorem~\ref{geo_N1_reduction_Bertini_DVR} there exists an integer $d_0\gg0$ such that for every $d\ge d_0$, there exists a hypersurface section $H_K\subset X_K$ of degree $d$ having geometrically $N_1$ (resp.~geometrically $R_1$) reduction at $s\in S$. The inclusion $\iota\colon H_K\hookrightarrow X_K$ induces a morphism of abelian varieties over $K$ 
\[
\iota^*\colon(\Pic^0_{X_K/K})_{\re}\to(\Pic^0_{H_K/K})_{\re}
\]
We claim $\operatorname{ker}\iota^*$ is finite for degree $d\gg0$. It suffices to show that its Zariski tangent space $T_0(\operatorname{ker}\iota^*)$ is trivial. Since $T_0(\operatorname{ker}\iota^*)$ is contained in the kernel of the tangent map $T_0\Pic^0_{X_K/K}\to T_0\Pic^0_{H_K/K}$, it is enough to prove this map has trivial kernel. Since $T_0\Pic^0_{Z/K}\cong H^1(Z,\oo_Z)$ for any proper geometrically reduced scheme $Z$ over a field $K$ \cite[Thm.~8.4/1(a)]{BLR90}, it suffices to show that the natural map $H^1(X_K,\oo_{X_K})\to H^1(H_K,\oo_{H_K})$ is injective. This map is induced by the short exact sequence of sheaves
\[
0\to\oo_{X_K}(-d)\to\oo_{X_K}\to\oo_{H_K}\to0.
\]
Since $X_K$ is normal projective of dimension $\ge2$, the Enriques-Severi-Zariski lemma \cite[Cor.~III.7.8]{Har77} yields $H^1(X_K,\oo_{X_K}(-d))=0$ for all $d\ge d_1\gg0$. This proves the claim.

Now, fix $d\ge\max\{d_0,d_1\}$ and let $H_K$ be the corresponding hypersurface section. By the induction hypothesis, $(\Pic^0_{H_K/K})_{\re}$ has semi-abelian (resp.~good) reduction at $s\in S$. Thus its abelian subvariety $\operatorname{im}\iota^*$ has the same reduction type \cite[Lem.~7.4/2]{BLR90}. Because $\operatorname{ker}\iota^*$ is finite, the surjection $(\Pic^0_{X_K/K})_{\re}\to\operatorname{im}\iota^*$ is an isogeny. Since an isogeny preserves semi-abelian (resp.~good) reduction \cite[Cor.~7.3/7]{BLR90}, $(\Pic^0_{X_K/K})_{\re}$ has semi-abelian (resp.~good) reduction at $s\in S$.

Finally, by Remark~\ref{Picard_variety_dual_to_Albanese}, $(\Pic^0_{X_K/K})_{\re}$ and $\Alb_{X_K/K}$ are dual abelian varieties. Since every abelian variety is isogenous to its dual, the two varieties share the same reduction types. This completes the proof.
\end{proof}

\section{Proofs of corollaries}\label{Proofs_corollaries}

\begin{corollary}\label{complete_intersections}
Let $X_K$ be as in Theorem~\ref{Main_result}. Suppose that $X_K\subseteq\pp^N_K$ has geometrically $N_1$ (resp.~geometrically $R_1$) reduction at $s\in S$. Assume $n=\dim X_K\ge2$. Then there exist infinitely many $(n-1)$-tuples $(H_1, \dots, H_{n-1})$ of hypersurfaces in $\pp^N_K$ such that for every $1\le i\le n-1$, the intersection $Y_i:=X_K\cap H_1\cap\cdots\cap H_i$ satisfies

$(1)$ $Y_i$ has geometrically $N_1$ (resp.~geometrically $R_1$) reduction at $s\in S$;

$(2)$ $\Alb_{Y_i/K}$ and $(\Pic^0_{Y_i/K})_{\re}$ have semi-abelian (resp.~good) reduction at $s\in S$.
\end{corollary}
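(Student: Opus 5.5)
The plan is to iterate Theorem~\ref{geo_N1_reduction_Bertini_DVR} $n-1$ times and apply Theorem~\ref{Main_result} at each stage. We build the tuple inductively, carrying along at step $i$ a model $\mathcal{Y}_i\subseteq\pp^N_R$ over $R=\oo_{S,s}$ whose special fibre is geometrically $N_1$ (resp.~geometrically $R_1$).

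First fix a model $X\subseteq\pp^N_R$ of $X_K$ with geometrically $N_1$ (resp.~geometrically $R_1$) special fibre. Set $Y_0:=X_K$ and $\mathcal{Y}_0:=X$. At stage $i-1$ with $1\le i\le n-1$, suppose we have $Y_{i-1}\subseteq\pp^N_K$ and its model $\mathcal{Y}_{i-1}$. We require that $Y_{i-1}$ is projective, geometrically normal and geometrically connected of dimension $n-i+1\ge2$, and that $\mathcal{Y}_{i-1}$ has the right special fibre. These are exactly the hypotheses of Theorem~\ref{geo_N1_reduction_Bertini_DVR}.

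Apply its proof to the embedding $\mathcal{Y}_{i-1}\subseteq\pp^N_R$. That proof does not re-embed: it fixes the given embedding $X\subseteq\pp^N_R$ and only uses hypersurfaces of $\pp^N$. So for all $d\gg0$ we obtain infinitely many degree-$d$ hypersurfaces $H_i\subset\pp^N_K$ with the following properties. First, $Y_i:=Y_{i-1}\cap H_i$ is projective, geometrically normal and geometrically connected by Corollary~\ref{geo_conn_and_normal}. Second, the schematic closure $H$ of $H_i$ gives a flat model $\mathcal{Y}_i:=\mathcal{Y}_{i-1}\cap H$ whose special fibre is geometrically $N_1$ (resp.~geometrically $R_1$). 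Also $\dim Y_i=n-i$, since $H_i$ avoids the generic point of $Y_{i-1}$. This gives item (1) for $Y_i$ and preserves the hypotheses needed for the next step, because $\dim Y_i\ge2$ whenever $i\le n-2$.

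Item (2) then follows directly by applying Theorem~\ref{Main_result} to each $Y_i$. This is legitimate because $Y_i$ is projective, geometrically normal and geometrically connected, and has geometrically $N_1$ (resp.~geometrically $R_1$) reduction by (1). For infinitude, at the first step (and indeed at every step) Theorem~\ref{geo_N1_reduction_Bertini_DVR} gives infinitely many admissible $H_1$. Distinct choices of $H_1$ give distinct tuples, so infinitely many tuples result.

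The main point needing care is that Theorem~\ref{geo_N1_reduction_Bertini_DVR} is stated with an existential embedding $X_K\subseteq\pp^N_K$. We must check that iterating it stays inside the fixed ambient $\pp^N_K$. I would handle this by observing that its proof starts from an arbitrary closed embedding of the model into $\pp^N_R$. Equivalently, the embedding it produces is just the restriction of the previous one, since $\mathcal{Y}_{i-1}\subseteq\mathcal{Y}_0\subseteq\pp^N_R$. The only remaining checks are routine bookkeeping of dimensions.
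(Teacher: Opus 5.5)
Your proposal is correct and is the paper's argument: iterate Theorem~\ref{geo_N1_reduction_Bertini_DVR} to get infinitely many admissible $H_1$ (and then $H_2,\dots,H_{n-1}$), and apply Theorem~\ref{Main_result} to each $Y_i$. You also check explicitly that the proof of Theorem~\ref{geo_N1_reduction_Bertini_DVR} works for any fixed embedding of the model into $\pp^N_R$, so every step stays in one ambient $\pp^N_K$ (tacitly the one induced by a good model, as in the paper); this makes precise what the paper's ``repeating this argument'' leaves implicit.
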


\begin{proof} 
Since $\dim X_K\ge2$, Theorem~\ref{geo_N1_reduction_Bertini_DVR} provides infinitely many choices for $H_1$ such that $Y_1$ satisfies property $(1)$. Theorem~\ref{Main_result} then applies to $Y_1$ to yield property $(2)$. The assertion follows by repeating this argument.
\end{proof}

\begin{proof}[Proof of Corollary~\ref{locally_stable_implies_geo_N1}] 
By the definition of locally stable reduction, there is a model $X$ of $X_K$ over $\oo_{S,s}$ such that $X_s$ is reduced and $(X,X_s)$ is log canonical. By \cite[Cor.~2.32]{Kol13}, every codimension-one point of $X_s$ is either a regular point or an intrinsic node (Definition~\ref{intrinsic_node}). Since $\ka(s)$ is perfect, regularity coincides with geometric regularity, and an intrinsic node is a node (Lemma~\ref{intrinsic_node_is_node}) as $\ch\ka(s)\ne2$. Thus $X_s$ is geometrically $N_1$, and Theorem~\ref{Main_result} applies.
\end{proof}

\begin{proof}[Proof of Corollary~\ref{semi-stable_implies_semi-abelian}]
The definition of semi-stable reduction implies that each point $x\in X_s$ admits \'etale $\ka(s)$-morphisms 
\[
(X_s,x)\leftarrow (U_s,u)\to(Z,z),
\]
where $Z=\Spec\ka(s)[x_1,x_2,\dots, x_{n+1}]/(x_1x_2\cdots x_r)$ and $z$ is the image of $u$. By Lemma~\ref{snc_implies_geo_N1}, $Z$ is geometrically $N_1$. Since \'etale morphisms preserve the dimensions of local rings, and since smoothness and nodality are \'etale-local properties, $X_s$ is geometrically $N_1$, and Theorem~\ref{Main_result} applies.
\end{proof}

\bibliographystyle{amsalpha}
\bibliography{references}

\end{document}